\allowdisplaybreaks \numberwithin{equation}{section}
\newtheorem{theorem}{Theorem}[section]
\newtheorem{proposition}[theorem]{Proposition}
\newtheorem{lemma}[theorem]{Lemma}
\theoremstyle{definition}
\newcommand{\R}{\mathbb{R}}
\newcommand{\ds}{\displaystyle}
\begin{document}
\title[New type solutions]
{Positive and sign-changing solutions for the nonlinear
Schr\"{o}dinger systems with synchronization and separation
 }
 \author{ Qingfang Wang and Wenju Wu}

\address{School of Mathematics and Computer Science, Wuhan Polytechnic University, Wuhan 430079, P. R. China }
\email{wangqingfang@whpu.edu.cn}

\address{School of Mathematics and Statistics, Central China Normal University, Wuhan, 430079, P. R. China}
\email{wjwu@mails.ccnu.edu.cn}
\thanks{The research was supported by NSFC (No.~12126356).}
\date{\today}
\begin{abstract}

In this paper, we consider the following nonlinear Schr\"odinger system:
\begin{eqnarray*}\begin{cases}
-\Delta u+P(x)u=\mu_1u^3+\beta uv^2, \,\,\,\,\,x\in\R^3,\cr
-\Delta v+Q(x)v=\mu_2v^3+\beta u^2v, \,\,\,\,\,x\in\R^3,
\end{cases}
\end{eqnarray*}
where $P(x),Q(x)$ are positive radial potentials,~$\mu_1,\,\mu_2>0$,~$\beta\in\R$ is a coupling constant.
We constructed a new type of solutions which are different from the ones obtained in \cite{PW}. This new family of solutions to system
have a more complex concentration structure and are centered at the points lying on the top and the bottom circles of a cylinder with height $h$. Moreover, we examine the effect of nonlinear coupling on the solution structure.
In the repulsive case, we construct an unbounded sequence of non-radial positive vector solutions of segregated type.  In the attractive case, we construct an unbounded sequence of non-radial positive vector solutions of synchronized type. Moreover, we prove that there exist infinitely many sign-changing solutions whose energy can be arbitrarily large.

\medskip\noindent
{\bf Keywords:} New type solutions, sign-changing solutions, Lyapunov-Schmidt reduction.
\end{abstract}
\maketitle
\section{Introduction}
We consider the following nonlinear Schr\"odinger system
\begin{eqnarray}\begin{cases}\label{eqs1.1}
-\Delta u+P(x)u=\mu_1u^3+\beta uv^2, \,\,\,\,\,x\in\R^3,\cr
-\Delta v+Q(x)v=\mu_2v^3+\beta u^2v, \,\,\,\,\,x\in\R^3,
\end{cases}
\end{eqnarray}
where $P(x)$ and $Q(x)$ are continuous positive radial functions, $\mu_1>0,\,\mu_2>0$ and $\beta\in \R$ is a coupling constant.
These types of systems arise when one consider standing wave solutions of time-dependent $N$-coupling  Schr\"odinger systems with $N=2$ of the form
\begin{eqnarray}\begin{cases}
-i\frac{\partial}{\partial t}\Phi_j=\Delta\Phi_j-V_j\Phi_j+\mu_j|\Phi_j|^2\Phi_j+\Phi_j\sum\limits_{l\neq j}\beta_{jl}|\Phi_l|^2, \,\,\,\,\,&x\in\R^3,\cr
\Phi_j=\Phi_j(x,t)\in C, \,\,\,\,\,t>0,\,&j=1,2,\cdots,N,
\end{cases}
\end{eqnarray}
where $\mu_j$ and $\beta_{jl}=\beta_{lj}$ are constants. These system of equations, also known as Gross-Pistaevskii equations, have applications in many physical problems such as in nonlinear optics and in Bose-Einstein condensates theory for multispecies Bose-Einstein condensates.
For $N=2,$ it arises in the Hartree-Fock theory for a double condensates, that is a binary mixture of a Bose-Einstein condensate in two different hyperfine states \cite{EGBB,E}.
Physically, $\Phi_1$ and $\Phi_2$ are the wave functions of the corresponding condensates, $\mu_1$,\,$\mu_2$ and $\beta$ are the intraspecies and interspecies scattering lengths, respectively. The sign of the scattering length $\beta$ determines whether the interactions of states are repulsive or attractive. In the attractive case the components of a vector solution tend to go along with each other, leading to synchronization. In the repulsive case, the components tend to segregated from each other, leading to phase separations. These phenomena have been documented in experiments as well as in numeric simulations, for example \cite{CLLL,MS,CH} and references therein.

For the understanding of \eqref{eqs1.1}, let us begin with the single equation. It's well known \cite{Kwong} that for any $1<p<2^*=\frac{2N}{N-2}$, the equation
\begin{eqnarray}\begin{cases}\label{eqs1.2}
 -\Delta w + w= w^p ,\ w>0  \mbox{ in }\;\R^N,\cr
 w(0)=\max\limits_{\R^d}w(x),  \ w \in H^1(\R^N)
 \end{cases}
\end{eqnarray}
has a unique solution denoted by $W$ which is radial, and there exists $C_{N, p} >0$ such that
\begin{align}\label{wy1}
W^*(|x|) = C_{N, p}(1+O(|x|^{-1}))|x|^{\frac{1-N}{2}}e^{-|x|} \quad\hbox{as}~ |x|\rightarrow\infty.
\end{align}
If we replace the term $w$ in \eqref{eqs1.2} by $Vw$ with a nonnegative potential $V$, the situation is drastically different. In the pioneer work \cite{WY}, Wei and Yan constructed infinitely many nonradial positive solutions of the nonlinear Schr\"odinger equation
\begin{align}\label{eqs1.03}
 -\Delta w + V(x)w=w^{p} \ \mbox{ in }\;\R^N, \quad w\in H^1(\R^N).
 \end{align}
Here $1<p<2^*$, $V$ is nonnegative, continuous and radial satisfying
$$V(|x|)=1+\frac{a}{|x|^m}+O\Big(\frac{1}{|x|^{m+\sigma}}\Big), \quad \mbox{as }\; |x|\rightarrow+\infty,$$
with $a>0$, $m>1$ and $\sigma>0$. In fact, the solutions constructed in \cite{WY} happen to be non-degenerate, in the sense that the linearized operators around these solutions are invertible. This property makes it possible to use the solutions as new blocks to generate new constructions \cite{GMPY1}.
Under the same assumptions, Duan and Musso in \cite{DM} constructed different type solutions of the equation \eqref{eqs1.03}.

\medskip
Later on, the two components system has been studied extensively in the literature. Let $\mu_1, \mu_2 > 0$, consider the following system
\begin{eqnarray}\begin{cases}\label{eqs1.3}
 -\Delta u + u=\mu_1 u^3+\beta v^2u,\,\,\,x\in \R^3,\cr
 -\Delta v + v=\mu_2 v^3+\beta u^2v,\,\,\,x\in \R^3.
 \end{cases}
 \end{eqnarray}
It's easy to see that there exist radial solutions of \eqref{eqs1.3} as follows:
\begin{align}
\label{uv}
 (U,V)=(\alpha W,\gamma W)
\end{align}
provided $-\sqrt{\mu_1\mu_2}<\beta<\min\{\mu_1,\mu_2\}$ or $\beta>\max\{\mu_1,\mu_2\}$, and
\begin{align}
\label{ag}
 \alpha=\sqrt{\frac{\mu_2-\beta}{\mu_1\mu_2-\beta^2}} ,\ \ \gamma=\sqrt{\frac{\mu_1-\beta}{\mu_1\mu_2-\beta^2}}.
\end{align}
If radial potentials $P_j$ are involved, Peng and Wang proved that under suitable conditions on $P_j$, the Schr\"odinger system
\begin{eqnarray}\begin{cases}\label{eqs1.3bis}
 -\Delta u + P_1u=\mu_1 u^3+\beta v^2u,\,\,\,x\in \R^3,\cr
 -\Delta v + P_2v=\mu_2 v^3+\beta u^2v,\,\,\,x\in \R^3
 \end{cases}
 \end{eqnarray}
has infinitely many non-radial positive solutions of segregated type or synchronized type in $\R^3$. A key ingredient of their study is the nondegeneracy of solutions $(U, V)$ given by \eqref{uv}. More precisely, they showed that there exists a sequence $(\beta_i) \subset(-\sqrt{\mu_1\mu_2},0)$ satisfying $\lim_{i\to\infty}\beta_i= -\sqrt{\mu_1\mu_2}$ such that for any
\begin{align}\label{lambda}
\beta \in \Lambda := \Big[(-\sqrt{\mu_1\mu_2},0)\backslash\{\beta_i\}\Big] \cup(0,\min\{\mu_1,\mu_2\})\cup(\max\{\mu_1,\mu_2\},\infty),
\end{align}
$(U,V)$ is non-degenerate for the system \eqref{eqs1.3}, in the sense that the kernel in $(H^1(\R^3))^2$ of the linearized system to \eqref{eqs1.3} at $(U, V)$ is given by
$${\rm Span}\Big\{\Big(\frac{\alpha}{\gamma}\frac{\partial W}{\partial x_j},\frac{\partial W}{\partial x_j}\Big),\; j=1,2,3 \Big\}$$ with $(\alpha, \gamma)$ in \eqref{ag}.

About $P=Q=1$, for the attractive case that is $\beta>0$, it is known that there are special positive solutions with two components being positive constant multiples of the unique positive solution of the scalar cubic Schr\"odinger equation for example \cite{BW1,BW2}.
The work of \cite{LW} gives solutions with one component peaking at the origin and the other having a finite number of peaks on a $k-$ polygon. In the symmetric case, \cite{WW} gives infinitely many non-radial positive solutions for $\beta\leq-1$. For the linearly coupled nonlinear Schr\"odinger systems. Lin and Peng \cite{LP} consider the segregated vector solutions.
Later on, many works are realized for the two or more components of systems. We refer the readers to \cite{AC,ACB1,BW1,BDW,DWW,FL,NTTV,PWW,PV,WZ,WY,WW,WW2,WY1} and the references therein.

We assume that $P(r)>0$,\,$Q(r)>0$ and $\beta$ satisfying the following conditions
\begin{itemize}
\item[$(P)$] There are constants $a\in\R$, $m>1$ and $\theta_1>0$ such that as $r\rightarrow+\infty$,
$$
P(r)=1+\frac{a}{r^m}+O(\frac{1}{r^{m+\theta_1}}).
$$
\item[$(Q)$] There are constants $b\in\R$, $n>1$ and $\theta_2>0$ such that as $r\rightarrow+\infty$,
$$
Q(r)=1+\frac{b}{r^n}+O(\frac{1}{r^{n+\theta_2}})
$$
\item[$(\beta_2)$]There exists $\beta^*>0$ such that $\beta<\beta^*$.
\end{itemize}

Our main results in this paper can be stated as follows:
\begin{theorem}\label{th1.1}
Suppose that $P(r)$ satisfies $(P)$ and $Q(r)$ satisfies $(Q)$. For any positive integer $k$, $\beta$ satisfies \eqref{lambda}, problem \eqref{eqs1.1} has infinitely many non-radial positive synchronized solutions $(u_k,v_k)$ whose energy can be arbitrarily large provided one of the following conditions holds:
\begin{itemize}
\item[ (i)] $m<n$, $a>0$ and $b\in\R$; or $m>n$, $a\in \R$ and $b>0$.
\item[(ii)] $m=n$, $aA_1+bA_2>0$ where $A_1$ and $A_2$ are defined in Proposition \ref{proA.4}.
\end{itemize}
\end{theorem}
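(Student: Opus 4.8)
The plan is to construct the synchronized solutions via the Lyapunov–Schmidt reduction method, using as building blocks rescaled copies of the scalar ground state $W$ placed at the vertices of two regular $k$-polygons situated on the top and bottom circles of a cylinder of height $h$. Concretely, I would fix a small parameter (or, equivalently, a large radius $\rho \sim k \log k$), and define approximate solutions of the form
\begin{equation*}
U_* = \alpha \sum_{j=1}^{k}\big(W_{\rho,h,j}^{+} + W_{\rho,h,j}^{-}\big), \qquad V_* = \gamma \sum_{j=1}^{k}\big(W_{\rho,h,j}^{+} + W_{\rho,h,j}^{-}\big),
\end{equation*}
where $W_{\rho,h,j}^{\pm}(x) = W\!\big(x - q_j^{\pm}\big)$, the centers $q_j^{\pm}$ lie on the circles of radius $\rho$ at heights $\pm h/2$, and $(\alpha,\gamma)$ are given by \eqref{ag}. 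The first step is to set up the correct symmetry class: we work in the subspace of $(H^1(\R^3))^2$ consisting of functions invariant under the dihedral-type group of rotations by $2\pi/k$ about the $x_3$-axis together with the relevant reflections, which reduces the problem to a finite-dimensional one parametrized essentially by $(\rho, h)$ and automatically rules out radial solutions.

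The second step is the linear theory. One writes $(u,v) = (U_* + \phi, V_* + \psi)$ and, after projecting away the kernel directions coming from translations of the bumps, solves the linearized system for $(\phi,\psi)$ using the nondegeneracy of $(U,V)=(\alpha W,\gamma W)$ stated in the excerpt (valid for $\beta \in \Lambda$). Here the point is that in the synchronized regime the linearized operator decouples, after the change of variables diagonalizing the coupling matrix $\begin{pmatrix}\mu_1\alpha^2 & \beta\alpha\gamma\\ \beta\alpha\gamma & \mu_2\gamma^2\end{pmatrix}$, into two scalar Schrödinger operators of the type analyzed by Wei–Yan, so the a priori estimates and the contraction-mapping solution of the nonlinear auxiliary equation go through with norms weighted to control the interaction between distinct bumps (the interactions decay like $e^{-|q_i^\pm - q_j^\mp|}$). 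This produces, for each admissible $(\rho,h)$, a solution $(\phi_{\rho,h},\psi_{\rho,h})$ of the projected problem with quantitative smallness.

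The third and decisive step is the reduced finite-dimensional problem: one must choose $(\rho,h)$ so that the Lagrange multipliers vanish, i.e. one solves a $2\times 2$ system obtained by differentiating the energy $\mathcal E(U_*+\phi, V_*+\psi)$ in $\rho$ and in $h$. Expanding the energy, the leading balance is between the potential contribution, which by hypotheses $(P)$–$(Q)$ behaves like $-(\text{const})(a\,\rho^{-m} + b\,\rho^{-n})$ up to the synchronization weights $\alpha^2, \gamma^2$ (this is where Proposition \ref{proA.4} and the constants $A_1,A_2$ enter, and where conditions (i) and (ii) guarantee the right sign), and the bump–bump interaction energy, which is a positive combination of nearest-neighbour terms on each circle plus the top–bottom interaction depending on $h$. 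Showing that this reduced functional has a critical point — via a min–max or degree argument on a suitable bounded region of the $(\rho,h)$-plane, with $k$ large so that the error terms are genuinely negligible — and then letting $k\to\infty$ to get arbitrarily large energy, completes the proof.

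The main obstacle, as usual in these constructions, is the reduced problem in step three: one needs the expansion of the energy to be precise enough that the competition between the potential term (which forces $\rho$ to a definite scale depending on $m,n,a,b$) and the interaction terms (which pin down $h$ and prevent the bumps on the two circles from collapsing onto one circle or flying apart) is nondegenerate, so that a critical point persists. The delicate points are: (a) obtaining sharp asymptotics for the top–bottom interaction $\sum_{i,j} \int W_{\rho,h,i}^{+}\,(W_{\rho,h,j}^{-})^3$ as a function of $h$, uniformly in $k$; (b) controlling the error $O(\|\phi\|^2 + \|\psi\|^2)$ in the energy expansion against the main order $\rho^{-m}$; and (c) in case (ii), verifying that the combination $aA_1 + bA_2$ is exactly the coefficient that appears, which requires tracking how the synchronization amplitudes $\alpha,\gamma$ weight the two potentials — this is precisely what is isolated in Proposition \ref{proA.4}.
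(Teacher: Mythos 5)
Your proposal follows essentially the same route as the paper: the same double-polygon ansatz $(U_{r,h},V_{r,h})=(\alpha,\gamma)\sum_j(W_{\bar x_j}+W_{\underline x_j})$ on the top and bottom circles of a cylinder, the same symmetry reduction and linear theory based on the nondegeneracy of $(\alpha W,\gamma W)$ for $\beta\in\Lambda$, the contraction-mapping solution of the projected problem, and the reduced two-parameter problem in $(r,h)$ balancing the potential contribution $a A_1 r^{-m}+bA_2 r^{-n}$ against the nearest-neighbour and top--bottom interactions (Proposition \ref{proA.4}), which the paper resolves by a fixed-point argument rather than your suggested min--max/degree argument. The only slip is the sign of the potential term in your expansion (it enters with a plus sign, so that with $a>0$ the reduced functional has an interior maximum in $r$), but this does not affect the structure of the argument.
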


\begin{theorem}\label{th1.2}
Suppose that $P(r)$ satisfies $(P)$, $Q(r)$ satisfies $(Q)$ and $m=n$, $a>0$, $b>0$, $\beta$ satisfies $(\beta_2)$. Problem \eqref{eqs1.1} has infinitely many non-radial positive segregated solutions $(u_k,v_k)$ whose energy can be arbitrarily large.
\end{theorem}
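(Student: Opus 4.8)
The plan is to prove Theorem~\ref{th1.2} by a finite-dimensional Lyapunov--Schmidt reduction in which the two components are supported on two interlaced families of bumps, so that the coupling term becomes exponentially small. Fix a large integer $k$ and parameters $(r,h)$ ranging in a set of the form $r\in[r_0\,k\ln k,\,r_1\,k\ln k]$ and $h\in[h_0\ln k,\,h_1\ln k]$. For $j=1,\dots,k$ set
\[
P_j^{\pm}=\Big(r\cos\tfrac{2\pi j}{k},\,r\sin\tfrac{2\pi j}{k},\,\pm h\Big),\qquad
Q_j^{\pm}=\Big(r\cos\tfrac{(2j+1)\pi}{k},\,r\sin\tfrac{(2j+1)\pi}{k},\,\pm h\Big),
\]
so that the centres lie on the top and the bottom circles of a cylinder of radius $r$ and height $2h$, with the $u$-centres $P_j^{\pm}$ and the $v$-centres $Q_j^{\pm}$ interlaced in the angular variable. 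Let $U_{\mu_i,y}(x)=\mu_i^{-1/2}W(x-y)$ be the rescaled ground states, solving $-\Delta w+w=\mu_i w^3$, and take as approximate solution
\[
(U_*,V_*)=\Big(\,\sum_{j=1}^{k}\big(U_{\mu_1,P_j^{+}}+U_{\mu_1,P_j^{-}}\big),\ \ \sum_{j=1}^{k}\big(U_{\mu_2,Q_j^{+}}+U_{\mu_2,Q_j^{-}}\big)\Big).
\]
Because any bump of $U_*$ and any bump of $V_*$ are at mutual distance at least of order $r/k\to\infty$, the interaction of the two clusters, in particular the term $\beta uv^2$, is $O(e^{-(1+\delta)r/k})$ for some $\delta>0$; this is the source of segregation, and assumption $(\beta_2)$ (merely $\beta<\beta^*$, with no lower bound and no excluded sequence) is exactly what is needed to control it. In particular the system nondegeneracy recorded in \eqref{lambda} plays no role in this construction.

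I would run the reduction in the Sobolev space of pairs $(u,v)$ invariant under the rotation by $2\pi/k$ about the $x_3$-axis, under $x_3\mapsto -x_3$ and under $x_2\mapsto -x_2$. In this class the approximate kernel is spanned essentially by $\partial_r(U_*,V_*)$ and $\partial_h(U_*,V_*)$, so the reduced problem depends only on $(r,h)$. Writing $(u,v)=(U_*+\phi,V_*+\psi)$ with $(\phi,\psi)$ in this symmetric class and $L^2$-orthogonal to the approximate kernel, the infinite-dimensional (auxiliary) equation is solved for $(\phi,\psi)=(\phi_{r,h},\psi_{r,h})$ by a contraction mapping argument. This rests on two ingredients: the invertibility, uniformly in $k$, of the linearized operator on the orthogonal complement --- in the segregated regime the linearization decouples, to leading order, into two scalar multi-bump linearized operators, each invertible by the classical nondegeneracy of the ground state $W$; and the estimate of the error $E=(E_1,E_2)$ of $(U_*,V_*)$, whose size is governed by $(r^2+h^2)^{-m/2}$ (coming from $P(|x|),Q(|x|)=1+O(|x|^{-m})$, here using $m=n$), by the nearest-neighbour interaction along each circle, of order $e^{-2r\sin(\pi/k)}$, and by the top--bottom interaction, of order $e^{-2h}$.

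Next I would expand the reduced energy $\mathcal K(r,h):=I\big(U_*+\phi_{r,h},\,V_*+\psi_{r,h}\big)$, where $I$ is the energy functional of \eqref{eqs1.1}. Up to a term $\mathcal A_0 k$ independent of $(r,h)$ (the total energy of $2k$ copies of the $u$-bump and $2k$ copies of the $v$-bump), one obtains an expansion of the schematic form
\[
\mathcal K(r,h)=\mathcal A_0 k+k\Big(\frac{A_1 a+A_2 b}{(r^2+h^2)^{m/2}}-B_1\,e^{-2r\sin(\pi/k)}-B_2\,e^{-2h}\Big)+\text{(lower order)},
\]
with $A_1,A_2$ as in Proposition~\ref{proA.4} and $B_1,B_2>0$. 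Here the hypotheses $m=n$ and $a>0,\,b>0$ are exactly what make the potential contribution a positive term of the right homogeneity and place both clusters on a common cylinder with a consistent balancing, while the (weakly attractive or repulsive) $u$--$v$ coupling, kept under control by $(\beta_2)$, enters only at lower order or with a favourable sign. For $k$ large the two-variable function above then has a critical point $(r_k,h_k)$ in the interior of the admissible range, with $r_k,h_k\to\infty$; by the reduction this yields a genuine solution $(u_k,v_k)=(U_*+\phi_{r_k,h_k},\,V_*+\psi_{r_k,h_k})$ of \eqref{eqs1.1}. Positivity follows since $U_*,V_*>0$ and $\|(\phi,\psi)\|$ is of strictly higher order, so $u_k,v_k>0$ by the maximum principle; the $k$-fold angular structure makes $(u_k,v_k)$ non-radial; and $\mathcal K(r_k,h_k)\sim\mathcal A_0 k\to\infty$ as $k\to\infty$ provides infinitely many solutions of arbitrarily large energy.

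The step I expect to be the main obstacle is the sharp expansion of $\mathcal K$ in the two parameters $(r,h)$ at once: one must show that the potential term, the intra-circle interaction and the new top--bottom interaction balance into a critical point that remains in the admissible range uniformly in $k$, and that all the remaining contributions --- the cross interactions between distinct circles, the quadratic-in-$(\phi,\psi)$ corrections, and the exponentially small $u$--$v$ coupling --- are genuinely of lower order than the three displayed terms. Disentangling the $h$-dependence hidden in $(r^2+h^2)^{-m/2}$ from the exponentially small $e^{-2h}$ term, and checking that the resulting two-dimensional critical point is nondegenerate (or at least stable) under these perturbations, is the delicate point, and is precisely where $m=n$, $a>0$, $b>0$ and $(\beta_2)$ are used.
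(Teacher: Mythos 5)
Your overall strategy --- an interlaced multi-bump ansatz on the top and bottom circles of a cylinder, a contraction mapping for the auxiliary equation relying only on the scalar nondegeneracy of $W_{\mu_i}$, the condition $(\beta_2)$ entering solely through the smallness of the cross term, and a reduced energy balancing the potential tail against the nearest-neighbour and top--bottom interactions --- is the same as the paper's. But there is a genuine gap in your choice of parameters: you place the $u$-bumps and the $v$-bumps on circles of the \emph{same} radius $r$ and reduce to the two variables $(r,h)$, asserting that the approximate kernel is spanned by $\partial_r(U_*,V_*)$ and $\partial_h(U_*,V_*)$. In the segregated regime this is false. Precisely because the linearization decouples into two scalar multi-bump operators (as you yourself observe), its approximate kernel in the symmetric class is four-dimensional: the pairs $(\partial_r U_*,0)$, $(\partial_h U_*,0)$, $(0,\partial_r V_*)$, $(0,\partial_h V_*)$ are all almost annihilated, and there is no symmetry exchanging the two clusters (the $\mu_i$, the potentials $P,Q$ and the angular positions all differ). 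Consequently a pair such as $\bigl(\partial_r U_*,\,-\lambda\,\partial_r V_*\bigr)$, with $\lambda$ chosen to make it orthogonal to $\partial_r(U_*,V_*)$, lies in your constrained space yet is an approximate kernel element, so the uniform invertibility $\|L(u,v)\|\geq\varrho\|(u,v)\|$ on that space fails and the contraction argument does not close. If instead you impose all four orthogonality conditions, the projected equation produces four Lagrange multipliers that cannot all be killed by adjusting only $(r,h)$; concretely, a single radius would have to balance simultaneously the $u$-cluster's potential term against the $u$-interactions and the $v$-cluster's potential term against the $v$-interactions, which is generically impossible for unrelated $a$ and $b$. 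This is exactly why the paper gives the two clusters \emph{independent} radii $r$ and $\rho$ (the ansatz $(W_{r,h},W_{\rho,h})$ with $\bar{x}_j$ at radius $r$ and $\bar{y}_j$ at radius $\rho$) and runs the reduced problem over $S_k\times S_k$; see the space $\bar{E}$, Lemmas \ref{lm3.1}--\ref{lm3.2} and Propositions 3.3 and \ref{proA.5}. The remaining ingredients of your outline (error estimates, energy expansion, positivity, divergence of the energy) are consistent with the paper's once this extra parameter is restored.
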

Our methods also allow us to treat sign-changing solutions. The solutions in Theorems 1.1 and 1.2 are constructed in the form with $u,v$ components both having alternating sign-changing bumps at infinity. For sign-changing solutions, we have the following results.
\begin{theorem}\label{th1.31}
Suppose that $P(r)$ satisfies $(P)$ and $Q(r)$ satisfies $(Q)$. Then there exists a decreasing sequence $\beta_k\subset(-\sqrt{\mu_1\mu_2},0)$ satisfying $\lim\limits_{k\to\infty}\beta_k= -\sqrt{\mu_1\mu_2}$ such that for any
\begin{align}\label{eqs110}
\beta \in \Lambda := \Big[(-\sqrt{\mu_1\mu_2},0)\backslash\{\beta_k\}\Big] \cup(0,\min\{\mu_1,\mu_2\})\cup(\max\{\mu_1,\mu_2\},\infty),
\end{align}
problem \eqref{eqs1.1} has infinitely many non-radial sign-changing synchronized solutions, provided the condition in Theorem 1.1 holds.
\end{theorem}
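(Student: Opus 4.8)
\emph{Proof strategy.} The plan is to run a Lyapunov--Schmidt reduction parallel to the proof of Theorem~\ref{th1.1}, the only change being that the approximate solution is assembled from copies of the ground state $W$ of \eqref{eqs1.2} carrying \emph{alternating signs}, so that the resulting function genuinely changes sign. First I would fix a large integer $k$ and two parameters $(\rho,h)$ ranging over a compact region, with $\rho$ of order $k\log k$ and $h$ in a suitable bounded interval; on the top and bottom circles (at heights $\pm h/2$) of the cylinder one places the $4k$ points
\[
x^{\pm}_{j}=\Big(\sqrt{\rho^{2}-h^{2}/4}\cos\big((j-1)\pi/k\big),\ \sqrt{\rho^{2}-h^{2}/4}\sin\big((j-1)\pi/k\big),\ \pm h/2\Big),\qquad j=1,\dots,2k,
\]
sets $W^{*}_{\rho,h}=\sum_{j=1}^{2k}(-1)^{j-1}\big(W(\cdot-x^{+}_{j})+W(\cdot-x^{-}_{j})\big)$, and looks for a solution $(u_{k},v_{k})$ of \eqref{eqs1.1} of synchronized form $(u_{k},v_{k})=(\alpha W^{*}_{\rho,h}+\phi,\ \gamma W^{*}_{\rho,h}+\psi)$, with $(\alpha,\gamma)$ as in \eqref{ag} and $(\phi,\psi)$ a small remainder.

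The first task is the infinite-dimensional part. I would work in the closed subspace of $(H^{1}(\R^{3}))^{2}$ of pairs $(\phi,\psi)$ that are even in $x_{2}$ and in $x_{3}$ and \emph{anti-invariant} under the rotation by $\pi/k$ in the $(x_{1},x_{2})$-plane (hence invariant under the rotation by $2\pi/k$); this anti-invariance is precisely what forces the alternating-sign, non-radial structure and is shared by $W^{*}_{\rho,h}$. On this subspace the linearized operator of \eqref{eqs1.1} at $(\alpha W,\gamma W)$ is non-degenerate provided $\beta$ avoids a countable set $\{\beta_{k}\}\subset(-\sqrt{\mu_{1}\mu_{2}},0)$ accumulating only at $-\sqrt{\mu_{1}\mu_{2}}$ --- a set that may differ from the one governing the positive solutions, since invertibility is now demanded in the anti-invariant class, and this is exactly why $\beta$ is restricted to $\Lambda$ in \eqref{eqs110}. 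Granting this uniform invertibility, for each frozen $(\rho,h)$ I would solve the orthogonal part of the system for the remainder $(\phi,\psi)$ by a contraction mapping in weighted norms $\|\cdot\|_{*},\|\cdot\|_{**}$ adapted to the $4k$ signed bumps, using a bound of the form
\[
\|\mathcal E_{\rho,h}\|_{**}\ \le\ C\Big(e^{-(1-\delta)\pi\rho/k}+e^{-(1-\delta)h}+\rho^{-\min\{m,n\}}\Big),
\]
where $\mathcal E_{\rho,h}$ measures how far $(\alpha W^{*}_{\rho,h},\gamma W^{*}_{\rho,h})$ is from solving \eqref{eqs1.1}.

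With $(\phi,\psi)=(\phi,\psi)(\rho,h)$ in hand, the problem reduces to finding a critical point of the reduced energy $E(\rho,h)$. Using the asymptotics \eqref{wy1} of $W$, $E$ expands as
\[
E(\rho,h)=kc_{0}+k\Big(\frac{aA_{1}+bA_{2}}{\rho^{m}}+\mathcal I_{1}(\rho,h)+\mathcal I_{2}(\rho,h)+\text{(lower-order terms)}\Big)
\]
in case~(ii) (with $\rho^{-m}$ replaced by $\rho^{-m}$ or $\rho^{-n}$, whichever is larger, in case~(i)), where $A_{1},A_{2}$ are the constants of Proposition~\ref{proA.4} and $\mathcal I_{1},\mathcal I_{2}$ collect the in-circle and top--bottom interactions. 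Passing from positive to alternating bumps flips the signs of some of the interaction coefficients relative to the construction of Theorem~\ref{th1.1}, while the leading potential term keeps the sign $aA_{1}+bA_{2}>0$ coming from the hypothesis; as in the proof of Theorem~\ref{th1.1}, a maximization / min-max argument in the box of parameters then produces an interior critical point $(\rho_{k},h_{k})$. The resulting pair $(u_{k},v_{k})=(\alpha W^{*}_{\rho_{k},h_{k}}+\phi,\gamma W^{*}_{\rho_{k},h_{k}}+\psi)$ solves \eqref{eqs1.1}; it is synchronized by construction, sign-changing because of the $\pi/k$-anti-invariance, non-radial because only finitely many bumps sit on each circle, and its energy tends to $+\infty$ as $k\to\infty$, which yields the claimed infinite family.

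I expect the main obstacle to lie in this last reduced problem together with the invertibility behind it. First, because the nearest-neighbour interactions now enter $E$ with signs opposite to those in the all-positive construction, one must check carefully --- tracking the explicit constants of Proposition~\ref{proA.4} and the asymptotics \eqref{wy1} --- that the competition between the potential term $(aA_{1}+bA_{2})\rho^{-m}$ and the modified interactions still possesses an interior critical point $(\rho_{k},h_{k})$, rather than letting the parameters drift to the boundary of the admissible region; this may dictate a particular choice of the alternating pattern, in particular of how signs are distributed between the top and bottom circles. Second, one must establish that the linearized operator remains uniformly invertible in the anti-invariant subspace --- it is this step that produces, and pins down, the exceptional sequence $\{\beta_{k}\}$ appearing in \eqref{eqs110}. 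A lesser technical point is choosing the weights in $\|\cdot\|_{*},\|\cdot\|_{**}$ so that the cross terms generated by opposite-sign neighbouring bumps do not degrade the contraction estimate.
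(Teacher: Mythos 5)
Your strategy coincides with the paper's own: the authors likewise build the ansatz $\bar{U}_{r,h}=(-1)^{i-1}U_{r,h}$ from alternating-sign copies of $(\alpha W,\gamma W)$ placed at $2l$ points on each of the two circles, work in the anti-invariant symmetry class $H'$, rerun the reduction of Section~2, and record in Section~4 exactly the sign flip in the nearest-neighbour interaction that you single out as the delicate point of the reduced problem. So the proposal is correct and takes essentially the same route; the only divergence is that the paper retains the same exceptional set $\{\beta_k\}$ coming from the Peng--Wang nondegeneracy of $(U,V)$ in the full space, which a fortiori gives invertibility on your anti-invariant subspace, so no new exceptional set needs to be produced.
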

\begin{theorem}\label{th1.41}
Suppose that $P(r)$ satisfies $(P)$, $Q(r)$ satisfies $(Q)$ and $m=n$, $a>0$, $b>0$, $\beta$ satisfies $(\beta_2)$. Problem \eqref{eqs1.1} has infinitely many non-radial sign-changing segregated solutions $(u_k,v_k)$ whose energy can be arbitrarily large. Furthermore, $\lim\limits_{k+\infty}\max u_k>0,\, \lim\limits_{k+\infty}\max v_k>0$.
\end{theorem}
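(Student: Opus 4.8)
\medskip
\noindent\textbf{Proof proposal.}
I would prove Theorem~\ref{th1.41} by a finite--dimensional Lyapunov--Schmidt reduction, in the spirit of \cite{WY,PW}, carried out on a \emph{two--circle, sign--changing} configuration. The key point --- reflected in the fact that only $\beta<\beta^{*}$ is assumed, not $\beta\in\Lambda$ --- is that in the segregated regime the two equations decouple at leading order: near its own concentration points each of $u,v$ is modelled on the ground state $\mu_{1}^{-1/2}W$, resp.\ $\mu_{2}^{-1/2}W$, of the scalar problem $-\Delta w+w=\mu_{j}w^{3}$, which is non-degenerate by the Kwong theorem \cite{Kwong} and \eqref{eqs1.2}. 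The coupling terms $\beta uv^{2}$ and $\beta u^{2}v$ are rendered subordinate by keeping the $u$--bumps and the $v$--bumps angularly separated, so that no non-degeneracy hypothesis on the vector solution $(U,V)$ of \eqref{uv} is required.

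\medskip
\noindent\emph{The ansatz.} For a large even integer $k$, fix two parameters: a radius $R$ of order $k\ln k$ and a height $h$ (in a range growing at a controlled rate with $k$). Take the two circles of radius $\sqrt{R^{2}-h^{2}}$ lying at heights $\pm h$ --- the ``cylinder of height $h$'' of the abstract --- and place the $u$--bumps at the $2k$ points $x_{j}^{\pm}$ of angular coordinate $\tfrac{2\pi j}{k}$ on these circles, with alternating signs $(-1)^{j}$, and the $v$--bumps at the $2k$ staggered points $y_{j}^{\pm}$ of angular coordinate $\tfrac{2\pi(j+1/2)}{k}$, $j=1,\dots,k$. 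Work in the closed subspace $H_{s}\subset(H^{1}(\R^{3}))^{2}$ of pairs that are even in $x_{3}$ and invariant under the rotation by $2\pi/k$ about the $x_{3}$--axis composed with the sign change $(u,v)\mapsto(-u,-v)$; every element of $H_{s}$ is automatically non-radial and sign-changing, and the only approximate-kernel directions that survive are those generated by varying $R$ and $h$. As approximate solution take
$$
\big(\mathcal U_{k},\mathcal V_{k}\big)=\Big(\mu_{1}^{-1/2}\sum_{j=1}^{k}(-1)^{j}\big(W_{x_{j}^{+}}+W_{x_{j}^{-}}\big),\ \ \mu_{2}^{-1/2}\sum_{j=1}^{k}(-1)^{j}\big(W_{y_{j}^{+}}+W_{y_{j}^{-}}\big)\Big),\qquad W_{p}:=W(\cdot-p).
$$

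\medskip
\noindent\emph{The reduction.} Using $(P)$, $(Q)$ and the decay \eqref{wy1}, one finds that the error of the ansatz, measured in a weighted norm \`a la \cite{WY}, is of size $O\big(k\,R^{-m}+k\,e^{-cR/k}\big)$, where the three families of bump interactions (same circle, top--bottom, and $u$--$v$ cross) all contribute at the level $k\,e^{-cR/k}$; this is small in the chosen parameter range. Writing $u=\mathcal U_{k}+\phi_{1}$, $v=\mathcal V_{k}+\phi_{2}$, I would solve the system projected onto the orthogonal complement in $H_{s}$ of the approximate kernel $\mathrm{span}\{\partial_{R}W_{x_{j}^{\pm}},\,\partial_{h}W_{x_{j}^{\pm}},\,\partial_{R}W_{y_{j}^{\pm}},\,\partial_{h}W_{y_{j}^{\pm}}\}$. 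Invertibility of the linearized operator on this complement, with a bound uniform in $k$, follows from the non-degeneracy of $\mu_{j}^{-1/2}W$ together with the smallness of the coupling within $H_{s}$; a contraction mapping then yields $(\phi_{1},\phi_{2})=(\phi_{1}(R,h),\phi_{2}(R,h))$ of strictly smaller order than the error. Solvability of \eqref{eqs1.1} in $H_{s}$ is thereby reduced to finding a critical point of $F(R,h):=\mathcal E(\mathcal U_{k}+\phi_{1},\,\mathcal V_{k}+\phi_{2})$, where $\mathcal E$ is the energy of \eqref{eqs1.1}. Expanding,
$$
F(R,h)=2k\,c_{0}+\frac{2k}{R^{m}}\big(a\,c_{1}+b\,c_{2}\big)\big(1+o(1)\big)+2k\,\Xi(R,h)\big(1+o(1)\big)+(\text{lower order}),
$$
where $c_{0}>0$ is the single-bump energy, $c_{1},c_{2}>0$ are explicit positive constants (of the same nature as $A_{1},A_{2}$ of Proposition~\ref{proA.4}, here combined since $m=n$), and $\Xi(R,h)$ gathers the bump-interaction energies, whose sign on each pair is dictated by the alternating pattern and which depends non-trivially on $h$ through the various top--bottom distances. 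Since $a>0$ and $b>0$, the coefficient $a\,c_{1}+b\,c_{2}$ is positive, and the competition between the term $R^{-m}$ and the interaction term $\Xi$ produces an interior critical point $(R_{k},h_{k})$ of $F$ --- obtained, for instance, as a maximum over the admissible box after the sign of $\nabla F$ on its boundary is checked. The corresponding $(u_{k},v_{k})$ solves \eqref{eqs1.1}; it is non-radial and sign-changing by construction; its bumps retain unit height, so that $\lim_{k\to\infty}\max u_{k}=\mu_{1}^{-1/2}\max W>0$ and $\lim_{k\to\infty}\max v_{k}=\mu_{2}^{-1/2}\max W>0$; and $\mathcal E(u_{k},v_{k})\sim 2k\,c_{0}\to\infty$, so the solutions are mutually distinct and there are infinitely many of them.

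\medskip
\noindent\emph{Main obstacles.} The first is the uniform-in-$k$ invertibility of the linearized operator: the second circle and the angular staggering enlarge the approximate kernel, and one must rule out any further near-zero modes --- here the $u$--$v$ separation, the symmetry constraint $H_{s}$, and $\beta<\beta^{*}$ are all used to keep the coupling under control. The second, and more delicate, is making the energy expansion sharp enough in \emph{both} $R$ and $h$: one needs the precise leading behaviour of $\Xi(R,h)$, in particular its genuine, non-degenerate dependence on the new height parameter $h$ (coming from the competition among the various inter-circle interactions and their multiplicities), in order to place $(R_{k},h_{k})$ in the interior of the admissible box and to ensure the critical point survives the $o(1)$ remainders. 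This is exactly where the hypotheses $m=n$, $a>0$, $b>0$, $\beta<\beta^{*}$ are used.
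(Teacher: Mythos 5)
Your overall route is the one the paper takes: a Lyapunov--Schmidt reduction around alternating-sign copies of the scalar ground states $W_{\mu_1},W_{\mu_2}$ placed on the top and bottom circles of a cylinder, with the $v$-bumps angularly staggered from the $u$-bumps so that the cross-interaction $\beta\int W_{r,h}^2W_{\rho,h}^2$ is of lower order; invertibility of the linearized operator from the non-degeneracy of the \emph{scalar} solitons plus $\beta<\beta^*$ (no non-degeneracy of the vector pair $(U,V)$ is needed, exactly as you observe); and an interior critical point of the reduced energy, whose leading interaction terms change sign because of the alternating pattern (this is the computation sketched in Section 4 and Proposition \ref{proA.5}).

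The one genuine gap is that you put both families of bumps on circles of a \emph{common} radius $R$, so your reduced function has only the two parameters $(R,h)$. In the segregated setting the orthogonality conditions must be imposed separately on the $u$-component (against $\partial_r W_{\mu_1,\bar x_j}$, $\partial_h W_{\mu_1,\bar x_j}$) and on the $v$-component (against $\partial_\rho W_{\mu_2,\bar y_j}$, $\partial_h W_{\mu_2,\bar y_j}$), because the scalar non-degeneracy controls each component on its own; after symmetrization this leaves more independent Lagrange multipliers than you have free parameters, so a critical point of $F(R,h)$ does not kill all projections and the reduced problem does not close. Moreover, there is no symmetry exchanging the two components that would force equal radii: in the expansion of Proposition \ref{proA.5} the $r$- and $\rho$-parts decouple up to the $o(1)$ cross term, and the balances $B_1 a/r^m\sim \frac{k}{r}e^{-2\pi\sqrt{1-h^2}r/k}$ and $B_2 b/\rho^m\sim \frac{k}{\rho}e^{-2\pi\sqrt{1-h^2}\rho/k}$ generically select different radii when $a\neq b$ or $\mu_1\neq\mu_2$, so restricting to the diagonal $r=\rho$ would in general miss the critical point. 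The fix is exactly what the paper does in Theorem \ref{th1.4}: introduce independent radii $r$ and $\rho$ (so the reduced function is $F(r,\rho)$ with $h$ in its admissible range) and find an interior maximum over $S_k\times S_k$. With that modification your argument aligns with the paper's proof; the remaining items you flag (sharp $h$-dependence of the interactions and the sign bookkeeping for the alternating pattern) are indeed where the work lies, and are handled in Lemmas \ref{lmA.1}--\ref{lmA.3} and the computation in Section 4.
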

Hereafter, for any function $K(x)>0$, the Sobolev space $H_K^1(\R^3)$ is endowed with the standard norm
$$
\|u\|_K=\Bigl(\int_{\R^3}|\nabla u|^2+K(x)u^2dx\Bigr)^{\frac{1}{2}},
$$
which is induced by the inner product
$$
\langle u,v\rangle_K=\int_{\R^3}\left(\nabla u\nabla v+K(x)uv\right).
$$
Define $H$ to be the product space $H_P^1(\R^3)\times H_Q^1(\R^3)$ with the norm
$$
\|(u,v)\|=\|u\|_P+\|v\|_Q.
$$
Let
\begin{align}\label{add1}
\bar{x}_j=r\Bigl(\sqrt{1-h^2}\cos\frac{2(j-1)\pi}{k},\sqrt{1-h^2}\sin\frac{2(j-1)\pi}{k},h\Bigr),j=1,2,\cdots,k,
\end{align}
\begin{align}\label{add2}
\underline{x}_j=r\Bigl(\sqrt{1-h^2}\cos\frac{2(j-1)\pi}{k},\sqrt{1-h^2}\sin\frac{2(j-1)\pi}{k},-h\Bigr),j=1,2,\cdots,k.
\end{align}
The parameters $h$ and $r$ are positive numbers and are chosen in the range
\begin{align*}
h\in[\alpha_0\frac{1}{k},\alpha_1\frac{1}{k}],\,\,\,\,\,\,\,\,r\in (C_1k\ln k,C_2k\ln k),
\end{align*}
for $\alpha_0,\alpha_1,C_1,C_2$ are positive constants, independent of $k$. We define the approximate solutions as
$$
U_{r,h}=\sum\limits_{j=1}^kU_{\bar{x}_j}+\sum\limits_{j=1}^kU_{\underline{x}_j},\,\,\,\,
V_{r,h}=\sum\limits_{j=1}^kV_{\bar{x}_j}+\sum\limits_{j=1}^kV_{\underline{x}_j},
$$
where $U_{\bar{x}_j}=U(y-\bar{x}_j)$, $U_{\underline{x}_j}=U(y-\underline{x}_j)$.

We will verify Theorem \ref{th1.1} by proving the following result.
\begin{theorem}\label{th1.3}
Under the assumptions of Theorem \ref{th1.1}, there is an integer $k_0>0$ such that for any integer $k\geq k_0$, \eqref{eqs1.1} has a solution $(u_k,v_k)$ of the form
$$
(u_k,v_k)=(U_{r,h}+\varphi_k, V_{r,h}+\psi_k),
$$
where $(\varphi_k,\psi_k)\in H_P\times H_Q$, $r\in (C_1k\ln k,C_2k\ln k)$ as $k\rightarrow+\infty$.
\end{theorem}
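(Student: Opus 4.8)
The plan is to carry out a Lyapunov–Schmidt reduction around the approximate solution $(U_{r,h},V_{r,h})$, treating $(r,h)$ as the two reduction parameters. First I would fix the symmetry class: work in the subspace of $H=H_P^1(\R^3)\times H_Q^1(\R^3)$ consisting of pairs that are even in $x_2$, even in $x_3$, and invariant under the rotation $x\mapsto R_{2\pi/k}x$ in the $(x_1,x_2)$-plane; this forces the $2k$ bumps to move in lockstep and collapses the genuine degrees of freedom to $r$ and $h$, while the radial symmetry of $P,Q$ keeps the problem equivariant. Next I would set up the nonlinear problem $(u,v)=(U_{r,h}+\varphi,V_{r,h}+\psi)$ and split it, via the orthogonal projection onto the approximate kernel $E_{r,h}$ spanned by the translation modes $\partial_r U_{r,h}$, $\partial_h U_{r,h}$ (and the $V$-analogues), into (a) an infinite-dimensional equation for $(\varphi,\psi)\in E_{r,h}^{\perp}$ and (b) a two-dimensional bifurcation equation.

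The heart of the reduction is the solvability of (a). Here I would use the linearized operator $L_{r,h}$ at $(U_{r,h},V_{r,h})$, whose block structure near each bump is, up to lower order, the linearization of system \eqref{eqs1.3} at $(U,V)=(\alpha W,\gamma W)$; by the nondegeneracy result recalled in the excerpt, valid precisely for $\beta\in\Lambda$ (so \eqref{lambda} is used in an essential way), this operator is uniformly invertible on the symmetric subspace orthogonal to $E_{r,h}$, with an inverse bounded independently of $k$ in a suitable weighted norm. The error term $(\ell_1,\ell_2):=\bigl(-\Delta U_{r,h}+PU_{r,h}-\mu_1U_{r,h}^3-\beta U_{r,h}V_{r,h}^2,\ \dots\bigr)$ must be estimated: the potential part contributes $O(r^{-m})$ and $O(r^{-n})$ from the expansions $(P)$,$(Q)$, while the interaction between consecutive bumps (at mutual distance comparable to $\tfrac{r}{k}$) contributes the exponentially small $\sum_{i\neq j}W(\bar x_i-\bar x_j)$-type terms, using the decay \eqref{wy1}; both are small in the chosen range $r\sim k\ln k$, $h\sim 1/k$. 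A standard contraction-mapping argument then yields a unique $(\varphi_k,\psi_k)=(\varphi_{k}(r,h),\psi_{k}(r,h))$ solving (a), depending continuously (indeed $C^1$) on $(r,h)$, with $\|(\varphi_k,\psi_k)\|$ controlled by the size of the error.

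Finally I would reduce (b) to a finite-dimensional variational problem: the reduced energy $\mathcal F_k(r,h):=I\bigl(U_{r,h}+\varphi_k,V_{r,h}+\psi_k\bigr)$, where $I$ is the energy functional of \eqref{eqs1.1}, has the expansion $\mathcal F_k(r,h)=k\,c_0+ k\bigl(\text{positive interaction term}\bigr)-k\bigl(\text{potential term}\bigr)+o(\cdots)$, and a solution of \eqref{eqs1.1} in the desired form corresponds to an interior critical point of $\mathcal F_k$ over the box $h\in[\alpha_0/k,\alpha_1/k]$, $r\in(C_1k\ln k,C_2k\ln k)$. The leading balance is between the inter-bump attraction, which behaves like $e^{-\frac{2\pi r}{k}\sqrt{1-h^2}}$ plus the top–bottom interaction $\sim e^{-2rh}$, and the potential contribution $\sim (a+b\,\text{or the }aA_1+bA_2\text{ combination})\,r^{-m}$; under hypothesis (i) or (ii) of Theorem \ref{th1.1} this combination has the right sign, so $\mathcal F_k$ attains an interior maximum in $(r,h)$, giving the critical point. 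The main obstacle I anticipate is precisely this last step: obtaining the expansion of $\mathcal F_k$ to high enough precision that the competing exponentially-small and polynomially-small terms are separated cleanly, and verifying that the resulting critical point genuinely lies in the interior of the parameter box (not on its boundary) uniformly in $k$ — this is where the precise ranges of $h$ and $r$, and the constants $A_1,A_2$ from Proposition \ref{proA.4}, must be pinned down.
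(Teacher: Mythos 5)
Your proposal follows essentially the same route as the paper: the same symmetric function space, a Lyapunov--Schmidt reduction with the two parameters $(r,h)$, uniform invertibility of the linearized operator on the orthogonal complement via the nondegeneracy of $(\alpha W,\gamma W)$ for $\beta\in\Lambda$, error estimates of size $O\bigl(k r^{-m}+k r^{-n}+k^{3/2}r^{-1}e^{-2\pi r/k}\bigr)$, a contraction mapping for $(\varphi_k,\psi_k)$, and the reduced energy expansion of Proposition \ref{proA.4}. The only divergence is the final finite-dimensional step: you propose locating an interior maximum of the reduced energy over the $(r,h)$ box, whereas the paper solves the critical-point system $F_{1r}=F_{1h}=0$ directly by rewriting it as a fixed-point equation for $(r,h)$ and applying the contraction mapping principle --- a minor variation, though your maximization route would additionally require checking that the balance between $A_1 r^{-m}$, $e^{-2\pi\sqrt{1-h^2}\,r/k}$ and $e^{-2rh}$ indeed produces an interior maximum (not a saddle) in both variables uniformly in $k$.
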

Let $W_{\mu}$ be the unique solution of the following problem
\begin{eqnarray}\begin{cases}
-\Delta w+w=\mu w^3,w>0,\hbox{in}~\R^3,\cr
w(0)=\max\limits_{x\in\R^3}w(x),w(x)\in H^1(\R^3).
\end{cases}
\end{eqnarray}
It is well known that $W_\mu$ is non-degenerate and $W_\mu(x)=W_{\mu}(|x|)$, $W_\mu'<0$. We will use $(W_{\mu_1}, W_{\mu_2})$ to build up the approximate solutions for \eqref{eqs1.1}.

Let
$$
W_{\mu_1,h}=\sum\limits_{j=1}^kW_{\mu_1,\bar{x}_j}+\sum\limits_{j=1}^kW_{\mu_1,\underline{x}_j},\, W_{\mu_2,h}=\sum\limits_{j=1}^kW_{\mu_2,\bar{y}_j}+\sum\limits_{j=1}^kW_{\mu_2,\underline{y}_j},
$$
where
\begin{align}\label{add3}
\bar{y}_j=\rho\Bigl(\sqrt{1-h^2}\cos\frac{(2j-1)\pi}{k},
\sqrt{1-h^2}\sin\frac{(2j-1)\pi}{k},h\Bigr),
\end{align}
\begin{align}\label{add4}
\underline{y}_j=\rho\Bigl(\sqrt{1-h^2}
\cos\frac{(2j-1)\pi}{k},\sqrt{1-h^2}\sin\frac{(2j-1)\pi}{k},-h\Bigr).
\end{align}
To prove Theorem \ref{th1.2}, we need to prove the following result.
\begin{theorem}\label{th1.4}
Under the assumptions of Theorem \ref{th1.2}, there exists an integer $k_0>0$ such that for any integer $k\geq k_0$, system \eqref{eqs1.1} has a solution $(u_k,v_k)$ of the form
$$
(u_k,v_k)=(W_{r,h}+\bar{\varphi}_k,W_{\rho,h}+\bar{\psi}_k),
$$
where $(\bar{\varphi}_k,\bar{\psi}_k)\in H_P\times H_Q$, $r\in (C_1k\ln k,C_2k\ln k),\,\rho\in(C_3k\ln k,C_4 k\ln k)$ as $k\rightarrow+\infty,$ $C_1,C_2,C_3,C_4$ are positive constants.
\end{theorem}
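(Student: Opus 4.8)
The plan is a Lyapunov--Schmidt reduction carried out in a symmetric subspace, following the scheme of \cite{WY,PW}, but with the two bump families sitting on the top and bottom circles \eqref{add3}--\eqref{add4}. Note first that \eqref{eqs1.1} is the Euler--Lagrange system of
$$
J(u,v)=\frac12\int_{\R^3}\!\big(|\nabla u|^2+P(x)u^2\big)+\frac12\int_{\R^3}\!\big(|\nabla v|^2+Q(x)v^2\big)-\frac{\mu_1}{4}\int_{\R^3}\!u^4-\frac{\mu_2}{4}\int_{\R^3}\!v^4-\frac{\beta}{2}\int_{\R^3}\!u^2v^2
$$
on $H=H^1_P(\R^3)\times H^1_Q(\R^3)$. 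Let $\mathbf H_s\subset H$ be the subspace of pairs $(u,v)$ that are even in $x_3$, even in $x_2$, and invariant under the rotation by $2\pi/k$ in the $(x_1,x_2)$--plane; by the symmetric criticality principle a critical point of $J|_{\mathbf H_s}$ is a genuine solution. The approximate solution is $(W_{\mu_1,h},W_{\mu_2,h})\in\mathbf H_s$ (with $W_{\mu_1,h}$ built on the points $\bar x_j,\underline x_j$ of radius $r$ and $W_{\mu_2,h}$ on $\bar y_j,\underline y_j$ of radius $\rho$), and the free geometric parameters are $(r,\rho,h)$ in the admissible ranges. Let $\Pi$ be the orthogonal projection in $\mathbf H_s$ onto the complement of the (finite-dimensional, after symmetrization) approximate kernel
$$
\mathcal K=\mathrm{Span}\Big\{\,\Pi_s\tfrac{\partial W_{\mu_1,\bar x_j}}{\partial r},\ \Pi_s\tfrac{\partial W_{\mu_2,\bar y_j}}{\partial \rho},\ \Pi_s\tfrac{\partial W_{\mu_i,\cdot}}{\partial h}\,\Big\},
$$
whose dimension matches that of the parameter space.

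The argument then has three steps. (i) \textit{Error estimate.} In a weighted norm $\|\cdot\|_*$ adapted to the sum of translated exponentials, bound the residual $J'(W_{\mu_1,h},W_{\mu_2,h})$. Its leading parts are the potential contributions, which by $(P)$, $(Q)$ with $m=n$ are of order $a\,r^{-m}+b\,\rho^{-m}$ up to the positive constants $A_1,A_2$ of Proposition \ref{proA.4}, together with the interactions between neighbouring bumps on each circle and between the two circles, all exponentially small in the corresponding distances; since $r,\rho\sim k\ln k$ and, by the choice $C_1<C_2$, $C_3<C_4$, the two circles can be kept at genuinely different radii, the $u$--$v$ cross interaction arising from the $\beta$--term is smaller still, which is precisely the ``segregated'' feature, and $(\beta_2)$ is what guarantees this cross term never dominates. (ii) \textit{Linear theory and the auxiliary equation.} Using the non-degeneracy of $W_{\mu_1}$ and $W_{\mu_2}$, show that $\Pi\circ J''(W_{\mu_1,h},W_{\mu_2,h})$ is invertible on $\Pi\mathbf H_s$ with inverse bounded uniformly in $k$; to leading order the linearised system decouples because its off-diagonal entries are $O(\beta\,W_{\mu_1,\bar x_i}W_{\mu_2,\bar y_j})$, exponentially small. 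A contraction mapping argument in a ball of radius $O(\|{\rm residual}\|_*)$ yields a unique $(\bar\varphi_k,\bar\psi_k)=(\bar\varphi_k(r,\rho,h),\bar\psi_k(r,\rho,h))\in\Pi\mathbf H_s$ solving $\Pi\,J'(W_{\mu_1,h}+\bar\varphi_k,W_{\mu_2,h}+\bar\psi_k)=0$, smooth in $(r,\rho,h)$ and with the claimed smallness; positivity of $(W_{\mu_1,h}+\bar\varphi_k,W_{\mu_2,h}+\bar\psi_k)$ follows since the correction is pointwise of lower order than the positive sum of bumps.

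(iii) \textit{Reduced problem.} It remains to find $(r,\rho,h)$ in the interior of the admissible ranges making the Lagrange multipliers vanish, i.e.\ a critical point of $\mathcal F(r,\rho,h)=J(W_{\mu_1,h}+\bar\varphi_k,\,W_{\mu_2,h}+\bar\psi_k)$. Steps (i)--(ii) give an expansion of the form
$$
\mathcal F(r,\rho,h)=k\,E_0+k\Big(\frac{A_1\,a}{r^{m}}+\frac{A_2\,b}{\rho^{m}}-c_1\Psi\big(\tfrac{2\pi r}{k}\sqrt{1-h^2}\big)-c_2\Psi\big(\tfrac{2\pi \rho}{k}\sqrt{1-h^2}\big)-c_3\Psi(2rh)-c_4\Psi(2\rho h)\Big)+o(\cdots),
$$
with $\Psi(t)\sim t^{-1/2}e^{-t}$ and $c_i>0$. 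Because $a>0$, $b>0$ and $m=n$, the decreasing potential terms compete with the negative inter-bump terms: for fixed $h$ the map $(r,\rho)\mapsto\mathcal F$ has a local maximum with $r,\rho\sim k\ln k$, and then $h\sim 1/k$ is adjusted so that $\partial_h\mathcal F=0$ (balancing the top and bottom circles), all made rigorous by a degree computation for $\nabla\mathcal F$ on the box. Feeding this critical point back through the reduction produces a solution $(u_k,v_k)=(W_{\mu_1,h}+\bar\varphi_k,W_{\mu_2,h}+\bar\psi_k)$ of \eqref{eqs1.1}; it has exactly $2k$ bumps, hence is non-radial, is positive, of segregated type, and its energy tends to $\infty$ as $k\to\infty$. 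Finally $\liminf_k\max u_k>0$ and $\liminf_k\max v_k>0$ since $\max u_k\ge \max W_{\mu_1}-o(1)$ and likewise for $v_k$.

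The main obstacle is step (iii): one must push the energy expansion to an order at which the potential terms $r^{-m},\rho^{-m}$, the neighbour interactions, and the top--bottom interactions are all simultaneously visible while the contribution of $(\bar\varphi_k,\bar\psi_k)$ and of the $\beta$--cross terms is genuinely negligible, and then show that the resulting finite-dimensional problem in $(r,\rho,h)$ has a critical point strictly inside, rather than on the boundary of, the admissible ranges. The hypotheses $m=n$, $a>0$, $b>0$ are exactly what make the two attractive-potential terms cooperate so that this interior balance holds for both radii at once, and $(\beta_2)$ is what prevents the coupling of the two bump families from spoiling either the uniform invertibility in step (ii) or this balance.
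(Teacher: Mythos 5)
Your plan is essentially the paper's: a Lyapunov--Schmidt reduction in the $k$-symmetric class, with the error, the invertibility of the linearized operator on the orthogonal complement of the approximate kernel, a contraction mapping for the auxiliary equation, and then an interior critical point of the reduced energy in $(r,\rho,h)$ balancing the potential terms $a/r^m$, $b/\rho^m$ against the neighbour and top--bottom interactions. One point in step (ii) is stated with the wrong mechanism, and it is exactly the point where the hypothesis $(\beta_2)$ enters: the linearized system does \emph{not} decouple merely because the off-diagonal entries $-2\beta W_{r,h}W_{\rho,h}$ are exponentially small. The quadratic form also contains the diagonal cross-potential terms $-\beta\int W_{\rho,h}^2\phi^2$ and $-\beta\int W_{r,h}^2\xi^2$, and these are \emph{not} small: the correction $\phi$ is not localized away from the points $\bar y_j$, so $\int W_{\rho,h}^2\phi^2$ is comparable to $\|\phi\|_{L^2}^2$. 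This is an $O(\beta)$ perturbation of the coercive part, which is precisely why the paper's Lemma 3.2 concludes coercivity only after imposing $\beta<\beta^*=1/C$ with $C$ the constant in $\int W_{\rho,h}^2\phi^2\le C\|\phi\|^2$. You do invoke $(\beta_2)$ at the end, so the conclusion is right, but the justification you give (exponential smallness of the coupling) would, if it were the whole story, make $(\beta_2)$ unnecessary; the argument should be corrected to rest on the smallness of $\beta$ against the coercivity constant. Two minor further remarks: the interaction kernel in $\R^3$ is $e^{-d}/d$ (not $d^{-1/2}e^{-d}$), consistent with \eqref{wy1} and the $\frac{k}{r}e^{-2\pi\sqrt{1-h^2}r/k}$ terms of Proposition \ref{proA.5}; and the pointwise positivity of $W_{r,h}+\bar\varphi_k$ requires an $L^\infty$ (or weighted pointwise) bound on the correction, not just the $H^1$ bound the contraction gives.
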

We will prove Theorems \ref{th1.3} and \ref{th1.4}
by using the finite dimensional reduction method.
Although this method is very standard, we have to make some modifications.
Also, there involve some technical and complicated estimates caused by
the nonlinear coupled terms. We would like to point out that
the main distinction between our construction and the ones in \cite{PW} is that there are two parameters $r,h$ to choose in the locations $\bar{x}_j,\underline{x}_j$ of the bumps in \eqref{add1} and \eqref{add2}.
Also, there are two parameters $\rho,h$ to choose in the locations $\bar{y}_j,\underline{y}_j$ of the bumps in \eqref{add3} and \eqref{add4}. So we need to overcome the difficulty caused by the new parameter $h.$ In fact, expansions for the main term of energy functional are determined by the order of $h$(see Propositions \ref{proA.4} and \ref{proA.5}).

Define
\begin{align*}
H'=\Bigl\{u:u\in H^1(\R^N),u\,\hbox{is even in}\,& x_j,j=2,3,\,\cr
&u(r\cos\theta,r\sin\theta,x'')=(-1)^iu(r\cos(\theta+\frac{\pi i}{k}),r\sin(\theta+\frac{\pi i}{k}),x'')\Bigr\}.
\end{align*}
Denote
\begin{align*}
(\bar{U}_{r,h},\bar{V}_{r,h})=\Bigl((-1)^{i-1}U_{r,h},(-1)^{i-1}V_{r,h}\Bigr).
\end{align*}
For sign-changing solutions to problem \eqref{eqs1.1}, we have the following result.
\begin{theorem}
Under the assumptions of Theorem 1.3, system \eqref{eqs1.1} has infinitely many non-radial synchronized sign-changing solutions
\begin{align*}
(u,v)=(\bar{U}_{r,h}+\varphi^*_k,\bar{V}_{r,h}+\psi^*_k)=\Bigl((-1)^{i-1}U_{r,h}+\varphi_k^*,(-1)^{i-1}V_{r,h}+\psi_k^*\Bigr).
\end{align*}
\end{theorem}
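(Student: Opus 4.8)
The plan is to re-run the finite-dimensional reduction used for Theorem \ref{th1.3}, but entirely inside the symmetric space $H'$ adapted to the alternating ansatz. Put $\mathbf{H}'=\bigl(H'\cap H_P^1(\R^3)\bigr)\times\bigl(H'\cap H_Q^1(\R^3)\bigr)$. The decisive feature of $H'$ is that every nonzero element changes sign: taking $i=1$ in the defining identity gives $u(r\cos\theta,r\sin\theta,x'')=-\,u\bigl(r\cos(\theta+\tfrac{\pi}{k}),r\sin(\theta+\tfrac{\pi}{k}),x''\bigr)$, so any solution of \eqref{eqs1.1} produced inside $\mathbf{H}'$ is automatically non-radial and sign-changing. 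First I would check that the approximate solution $(\bar U_{r,h},\bar V_{r,h})=\bigl((-1)^{i-1}U_{r,h},(-1)^{i-1}V_{r,h}\bigr)$ lies in $\mathbf{H}'$: one places the $\pm$ bumps at the angles $\tfrac{(j-1)\pi}{k}$, $j=1,\dots,2k$, on the top and bottom circles of the cylinder with alternating signs, and uses that $W_{\mu_1},W_{\mu_2}$ (hence $U,V$) are radial and even, so the required evenness in $x_2,x_3$ and the rotation/sign relation both hold by construction.

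Next I would set up the \emph{equivariant} Lyapunov--Schmidt scheme. Since the cubic and coupling terms $u^3$, $uv^2$, $u^2v$ are odd and equivariant for the finite symmetry group defining $H'$, the energy functional $I$ of \eqref{eqs1.1} restricts to $\mathbf{H}'$ and, by the principle of symmetric criticality, its critical points in $\mathbf{H}'$ are genuine solutions. Writing $(u,v)=(\bar U_{r,h}+\varphi,\bar V_{r,h}+\psi)$ with $(\varphi,\psi)\in\mathbf{H}'$ orthogonal to the approximate kernel $\mathrm{Span}\bigl\{(\partial_r\bar U_{r,h},\partial_r\bar V_{r,h}),\,(\partial_h\bar U_{r,h},\partial_h\bar V_{r,h})\bigr\}$ --- note that only the $r$- and $h$-derivatives survive in $\mathbf{H}'$, the point-translation directions $\partial_{x_j}W_\mu$ being annihilated by the symmetry --- the linearized operator is invertible on this complement with a lower bound on its norm independent of $k$. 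This is exactly where the nondegeneracy of $(U,V)$ for $\beta\in\Lambda$ recalled after \eqref{lambda} enters, as in the proof of Theorem \ref{th1.3}; the factors $(-1)^{i-1}$ do not affect the spectral estimates because on each fundamental sector the relevant operator is a copy of the one appearing in the positive case. A contraction-mapping argument then yields $(\varphi_k^*,\psi_k^*)\in\mathbf{H}'$ solving the auxiliary equation, with the same smallness bounds as in Theorem \ref{th1.3}, since the error $\|I'(\bar U_{r,h},\bar V_{r,h})\|$ is estimated sector by sector and is unchanged in size.

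It remains to solve the two-dimensional reduced problem: find $(r,h)$ in the admissible ranges $r\in(C_1k\ln k,C_2k\ln k)$, $h\in[\alpha_0\tfrac{1}{k},\alpha_1\tfrac{1}{k}]$ making $F_k(r,h):=I(\bar U_{r,h}+\varphi_k^*,\bar V_{r,h}+\psi_k^*)$ stationary. Its leading-order expansion is the same as in the synchronized positive case and is governed by Propositions \ref{proA.4} and \ref{proA.5}: the potentials contribute the $r^{-m}$, $r^{-n}$ pieces with coefficients involving $a,b$ (and $A_1,A_2$ when $m=n$), while the bump--bump interactions contribute terms exponentially small in the nearest-neighbor distances $\tfrac{2\pi r\sqrt{1-h^2}}{k}$ (within a circle) and $2rh$ (between the two circles). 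The only change caused by the sign-changing pattern is that certain interaction terms acquire an overall minus sign; since they enter $F_k$ at the same exponential order, the balancing of the potential decay against the dominant interaction proceeds as in Theorem \ref{th1.3}, producing a critical point $(r_k,h_k)$ (for instance via a max--min or a degree argument for the reduced two-variable function). Substituting back gives the asserted family $(u,v)=(\bar U_{r,h}+\varphi_k^*,\bar V_{r,h}+\psi_k^*)$, and letting $k\to\infty$ yields infinitely many distinct non-radial synchronized sign-changing solutions.

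The main obstacle, just as in Theorems \ref{th1.3}--\ref{th1.4}, is the presence of the \emph{two} parameters $r$ and $h$: one must verify that $F_k$ genuinely depends on $h$ at a computable order, so that $\partial_hF_k=0$ is a nontrivial equation, and that the cross-interaction between the top and bottom circles --- whose separation $2rh\sim 2\ln k$ is comparable to the intra-circle spacing --- is resolved precisely enough to locate a critical point, the sign cancellations in these cross terms making the bookkeeping delicate. Establishing the energy expansions of Propositions \ref{proA.4} and \ref{proA.5} in the sign-changing setting, with error terms controlled uniformly in $(r,h)$, is the technical heart of the argument; once it is in place the remainder is a routine adaptation of the positive-solution construction.
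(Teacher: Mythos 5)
Your proposal is correct and follows essentially the same route as the paper: an alternating-sign ansatz in the symmetric space $H'$, an equivariant Lyapunov--Schmidt reduction that reuses the nondegeneracy and invertibility estimates from the positive synchronized case, and the observation that the energy expansion of Proposition \ref{proA.4} survives with the same leading orders (up to sign flips in the nearest-neighbor interaction terms), after which one locates a critical point of the reduced two-variable function in $(r,h)$. The paper's Section 4 gives exactly this sketch, including the recomputation of the interaction term $I_3$ with the $(-1)^{k+i}$ factors, so no substantive difference in approach.
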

\begin{theorem}
Under the assumptions of Theorem 1.4, system \eqref{eqs1.1} has infinitely many non-radial segregated sign-changing solutions
\begin{align*}
(u,v)=(\bar{W}_{r,h}+\tilde{\varphi}_k,\bar{W}_{\rho,h}+\tilde{\psi}_k)=\Bigl((-1)^{i-1}W_{r,h}+\tilde{\varphi}_k,(-1)^{i-1}W_{\rho,h}+\tilde{\psi}_k\Bigr).
\end{align*}
\end{theorem}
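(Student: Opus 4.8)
The plan is to run the finite-dimensional reduction of Theorem~\ref{th1.4} entirely inside the symmetric space $H'\times H'$. The ansatz $(\bar W_{r,h},\bar W_{\rho,h})=\bigl((-1)^{i-1}W_{r,h},(-1)^{i-1}W_{\rho,h}\bigr)$ belongs to $H'\times H'$ by construction, and since \eqref{eqs1.1} is the Euler--Lagrange system of an energy functional $I$ that is invariant under the group action defining $H'$, the principle of symmetric criticality guarantees that any critical point of $I$ restricted to $H'\times H'$ is a genuine solution of \eqref{eqs1.1}. The $(-1)^i$-equivariance built into $H'$ forces the bumps on the top and bottom circles to flip sign from one angular sector to the next, so such a critical point is automatically sign-changing in each component; it remains of segregated type because, as in Theorem~\ref{th1.4}, the $u$-centres $\bar x_j,\underline x_j$ and the $v$-centres $\bar y_j,\underline y_j$ sit on circles of different radii $r\neq\rho$ and are offset by half a period, so the two bump configurations overlap only by an exponentially small amount.

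First I would estimate the error $\|I'(\bar W_{r,h},\bar W_{\rho,h})\|$ in the dual of $H'\times H'$. Every ingredient of this estimate — the pointwise decay \eqref{wy1} of $W_{\mu_1},W_{\mu_2}$, the interaction integrals between neighbouring bumps, the contributions coming from $(P)$, $(Q)$, and the coupling $\beta$-terms — depends only on the distances between bump centres, which are unchanged by the sign factors $(-1)^{i-1}$; the latter affect only the \emph{signs} of certain interaction integrals, never their magnitude. Hence the error bound is identical to the one in the proof of Theorem~\ref{th1.4}. Next I would prove invertibility of the linearized operator on the subspace of $H'\times H'$ orthogonal to the approximate kernel spanned by $\partial_r(\bar W_{r,h},\bar W_{\rho,h})$, $\partial_\rho(\bar W_{r,h},\bar W_{\rho,h})$ and $\partial_h(\bar W_{r,h},\bar W_{\rho,h})$. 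This rests on the (known) non-degeneracy of $W_{\mu_1}$ and $W_{\mu_2}$ — no coupled non-degeneracy is needed here because the two components are nearly decoupled in the segregated regime — together with the fact that the evenness in $x_2,x_3$ and the rotational equivariance of $H'$ kill every translation direction of a single bump except those reproduced by varying $r,\rho,h$. A contraction mapping argument then yields, for all large $k$, a unique small $(\tilde\varphi_k,\tilde\psi_k)\in H'\times H'$ solving the auxiliary equation, with the same norm bounds as in Theorem~\ref{th1.4}.

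It then remains to solve the finite-dimensional problem. Substituting $(\bar W_{r,h}+\tilde\varphi_k,\bar W_{\rho,h}+\tilde\psi_k)$ into $I$ produces a reduced functional $\mathcal K_k(r,\rho,h)$ whose critical points in the box $r\in(C_1k\ln k,C_2k\ln k)$, $\rho\in(C_3k\ln k,C_4k\ln k)$, $h\in[\alpha_0/k,\alpha_1/k]$ give solutions of the desired form. The leading-order expansion of $\mathcal K_k$ is governed, as in Theorems~\ref{th1.3} and~\ref{th1.4}, by the balance between the mutual interaction of the $k$ bumps on each circle — together with the top/bottom interaction, which is where $h$ enters with the order described in Propositions~\ref{proA.4} and~\ref{proA.5} — and the potential contributions through $(P)$ and $(Q)$. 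Under $m=n$, $a>0$, $b>0$ and $(\beta_2)$, this expansion has exactly the same form as in Theorem~\ref{th1.4}, because the dominant nearest-neighbour interactions pair bumps of equal sign; hence $\mathcal K_k$ has an interior critical point $(r_k,\rho_k,h_k)$, which furnishes the solution. Finally, since each component is, up to the small perturbation $(\tilde\varphi_k,\tilde\psi_k)$, a finite sum of translates of $W_{\mu_i}$ whose maximum is a fixed positive number, one gets $\liminf_{k}\max u_k>0$ and $\liminf_{k}\max v_k>0$.

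The hard part will be the bookkeeping of signs in the reduced energy: one must check that the sign pattern imposed by $H'$ is internally consistent and, crucially, that the nearest-neighbour interactions which dominate $\mathcal K_k$ couple equal-sign bumps, so that the leading term keeps the sign required to produce an interior critical point rather than a boundary degeneracy. A second delicate point — already present in Theorem~\ref{th1.4} but more fragile here because the $u$- and $v$-bumps live on different radii and are angularly offset — is showing that the coupling terms $\beta uv^2$, $\beta u^2v$ stay strictly lower order than the self-interactions throughout the parameter box and uniformly in the sign pattern; this is exactly where the bound $(\beta_2)$ and the exponentially small overlap of the two bump configurations have to be exploited.
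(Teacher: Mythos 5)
Your overall strategy coincides with the paper's: the paper likewise keeps the ansatz $\bigl((-1)^{i-1}W_{r,h},(-1)^{i-1}W_{\rho,h}\bigr)$ with $2l$ alternating bumps, runs the same reduction in the equivariant space, and observes that only the interaction integrals $I_3,I_4,I_5$ of Propositions \ref{proA.4}--\ref{proA.5} change, the linear theory and error bounds being insensitive to the sign factors. Up to that point your sketch is fine.

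There is, however, one concretely false assertion on which your finite-dimensional step rests: that ``the dominant nearest-neighbour interactions pair bumps of equal sign,'' so that the reduced energy ``has exactly the same form as in Theorem \ref{th1.4}.'' Under the equivariance defining $H'$, a rotation by $\pi i/k$ carries the factor $(-1)^i$, so the two bumps closest in angle always carry \emph{opposite} signs; the only equal-sign nearest pairs are the top/bottom pairs $\bar x_j,\underline x_j$ at distance $2rh$. In the parameter range used throughout the paper one has $2rh\approx \tfrac{2\pi(m+2)}{m}\tfrac{r}{k}>2\pi\sqrt{1-h^2}\tfrac rk$, so the angular (opposite-sign) pairs are strictly closer and their interaction dominates. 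Consequently the leading cross term $-\sum_{i\neq j}(-1)^{i+j}\int W_{\bar x_j}^3W_{\bar x_i}$ changes sign relative to the positive case --- this is exactly what the paper's own computation in Section 4 yields, namely a contribution $+C\frac{l^2}{r}e^{-2\pi r\sqrt{1-h^2}/l}$ rather than $-C_\beta\frac kr e^{-2\pi\sqrt{1-h^2}r/k}$. With $a>0$ the term $\frac{aA_1}{r^m}$ and this flipped interaction term are then \emph{both} decreasing in $r$, so the interior-maximum balance you invoke (and which drives the proofs of Theorems \ref{th1.3} and \ref{th1.4}) no longer produces a critical point by itself; one must either locate the critical point as a saddle, or play the repulsive angular term against the attractive top/bottom term $-\tilde D_1\frac kr e^{-2rh}$ and the potential term, and verify that the resulting system analogous to \eqref{2.24} is still solvable in the interior of the parameter box. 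This is the missing step; your sketch replaces it with a sign claim that the geometry of $H'$ contradicts. (To be fair, the paper's own Section 4 is also internally inconsistent here, computing a positive leading interaction and then writing the final expansion with negative coefficients $\tilde C_1,\tilde C_2$; but a complete proof must resolve this sign issue, not assume it away.)
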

This paper is organized as follows. In section 2, we will carry out the reduction to a finite dimensional setting and prove Theorem 1.5. The study of the existence of segregated solutions for system \eqref{eqs1.1} and the proof of Theorem 1.6 will appear in Section 3. We conclude with the energy expansion in the appendix.

\section{synchronized vector solutions and the proof of Theorem \ref{th1.1}}
In this section, we consider synchronized vector solutions and prove Theorem \ref{th1.1} by proving Theorem \ref{th1.3}.
For $j=1,2,\cdots,k$, we divide $\R^3$ into $k$ parts
\begin{align*}
\Omega_j=\Bigl\{y=(y_1,y_2,y_3)\in\R^3:\langle\frac{(y_1,y_2)}{|(y_1,y_2)|},
(\cos\frac{2(j-1)\pi}{k},\sin\frac{2(j-1)\pi}{k})\rangle\geq\cos\frac{\pi}{k}\Bigr\},
\end{align*}
where $\langle\cdot,\cdot\rangle$ denotes the scalar product in $\R^2$. For $\Omega_j$, we divide it into two kinds of points
\begin{align*}
\Omega_j^+=\{y:y=(y_1,y_2,y_3)\in\Omega_j,y_3\geq0\};\,\,\,\,
\Omega_j^-=\{y:y=(y_1,y_2,y_3)\in\Omega_j,y_3\leq0\}.
\end{align*}
We see that $\R^3=\bigcup\limits_{j=1}^k\Omega_j$, $\Omega_j=\Omega_j^+\bigcup\Omega_j^-$, and the interior of $\Omega_i\bigcap\Omega_j$, $\Omega_j^+\bigcap\Omega_j^-$ are empty sets for $i\neq j$.

We now define the symmetric Sobolev space
\begin{align*}
H_s=&\Bigl\{u:u\in H^1(\R^N), u\, \hbox{is even in}\,y_2, \cr &u(\sqrt{y_1^2+y_2^2}\cos\theta,\sqrt{y_1^2+y_2^2}\sin\theta,y_3)=u(\sqrt{y_1^2+y_2^2}\cos(\theta+\frac{2\pi j}{k}),\sqrt{y_1^2+y_2^2}\sin(\theta+\frac{2\pi j}{k}),y_3)\Bigr\},
\end{align*}
where $\theta=\arctan\frac{y_2}{y_1}$ is a polar coordinate in $\R^2$.

In this paper, we always assume
\begin{align*}
(r,h)\in&\Bigl(\bigl(\frac{m}{2\pi}-\delta\bigr)k\ln k,\,\bigl(\frac{m}{2\pi}+\delta\bigr)k\ln k\Bigr)\times\Bigl(\bigl(\frac{\pi(m+2)}{m}-\delta_1\bigr)\frac{1}{k},\,\bigl(\frac{\pi(m+2)}{m}+\delta_1\bigr)\frac{1}{k}\Bigr)\cr
=:&S_k\times D_h,
\end{align*}
for some $\delta,\delta_1>0$ small and independent of $k$. Let
\begin{align*}
\bar{Y}_{1j}=\frac{\partial U_{\bar{x}_j}}{\partial r},\, \underline{Y}_{1j}=\frac{\partial U_{\underline{x}_j}}{\partial r},\,\bar{Y}_{2j}=\frac{\partial U_{\bar{x}_j}}{\partial h},\,\underline{Y}_{2j}=\frac{\partial U_{\underline{x}_j}}{\partial h};
\end{align*}
\begin{align*}
\bar{Z}_{1j}=\frac{\partial V_{\bar{x}_j}}{\partial r}, \,\underline{Z}_{1j}=\frac{\partial V_{\underline{x}_j}}{\partial r},\,\bar{Z}_{2j}=\frac{\partial V_{\bar{x}_j}}{\partial h},\,\underline{Z}_{2j}=\frac{\partial V_{\underline{x}_j}}{\partial h}.
\end{align*}

Define
\begin{align*}
I(u,v)=\frac{1}{2}\int_{\R^3}(|\nabla u|^2+P(|x|)u^2+|\nabla v|^2+Q(|x|)v^2)-\frac{1}{4}\int_{\R^3}(\mu_1|u|^4+\mu_2|v|^4)-\frac{\beta}{2}\int_{\R^3}u^2v^2.
\end{align*}
Then $I\in C^2(\R,\R)$ and its critical points are solutions of \eqref{eqs1.1}.
We define the constrained space
\begin{align*}
E=\Bigl\{(u,v),(u,v)\in H_s\times H_s,&\sum\limits_{j=1}^k\int_{\R^N}W_{\bar{x}_j}^2(\bar{Y}_{lj}u+\bar{Z}_{lj}v)=0,\cr
\,&\,\,\,\,\,\,\,\,\,\,\,\,\,\,\,\,\,\,\,\,\,\,\,\,\sum\limits_{j=1}^k\int_{\R^N}W_{\underline{x}_j}^2(\b{Y}_{lj}u+\b{Z}_{lj}v)=0,\,\,l=1,2\Bigr\}.
\end{align*}

Let
\begin{align*}
J(\varphi,\psi)=I(U_{r,h}+\varphi,V_{r,h}+\psi),(\varphi,\psi)\in E.
\end{align*}
Expand $J(\varphi,\psi)$ as follows:
\begin{equation}\label{eqs2.1}
J(\varphi,\psi)=J(0,0)+\ell(\varphi,\psi)+\frac{1}{2}L(\varphi,\psi)+R(\varphi,\psi),\,\,\,(\varphi,\psi)\in E,
\end{equation}
where
\begin{align*}
\ell(\varphi,\psi)=&\int_{\R^3}\Bigl[(P(|x|)-1)U_{r,h}\varphi+(Q(|x|)-1)V_{r,h}\psi\Bigr]+\mu_1\int_{\R^3}\Bigl[U_{r,h}^3-\bigl(\sum\limits_{j=1}^kU_{\bar{x}_j}^3
+\sum\limits_{j=1}^kU_{\underline{x}_j}^3\bigr)\Bigr]\varphi\cr
&+\mu_2\int_{\R^3}\Bigl[V_{r,h}^3-(\sum\limits_{j=1}^kV_{\bar{x}_j}^3+\sum\limits_{j=1}^kV_{\underline{x}_j}^3)\Bigr]\psi
-\beta\int_{\R^3}\Bigl[U_{r,h}^2V_{r,h}\varphi-(\sum\limits_{j=1}^kV_{\bar{x}_j}U_{\bar{x}_j}^2+V_{\underline{x}_j}U_{\underline{x}_j}^2)\varphi\Bigr]\cr
&-\beta\int_{\R^3}\Bigl[U_{r,h}V_{r,h}^2\psi-(\sum\limits_{j=1}^kV_{\bar{x}_j}^2U_{\bar{x}_j}+V_{\underline{x}_j}^2U_{\underline{x}_j})\psi\Bigr],
\end{align*}
\begin{align*}
L(\varphi,\psi)=\int_{\R^3}\Bigl(|\nabla\varphi|^2+P(|x|)\varphi^2+|\nabla \psi|^2+\Bigr.&\Bigl.Q(|x|)\psi^2\Bigr)-\int_{\R^3}\Bigl(3\mu_1U_{r,h}^2\varphi^2+3\mu_2V_{r,h}^2\psi^2\Bigr)\cr
&-\beta\int_{\R^3}\Bigl(U_{r,h}^2\psi^2+4U_{r,h}V_{r,h}\varphi\psi+V_{r,h}^2\varphi^2\Bigr),
\end{align*}
and
\begin{align*}
R(\varphi,\psi)=&\int_{\R^3}\left[\mu_1\left(U_{r,h}+\varphi\right)^4+\mu_2\left(V_{r,h}+\psi\right)^4-\mu_1U_{r,h}^4-\mu_2V_{r,h}^4\right.\cr
&\left.-4\mu_1U_{r,h}^3\varphi-4\mu_2V_{r,h}^3\psi-6\mu_1U_{r,h}^2\varphi^2-6\mu_2V_{r,h}^2\psi^2\right]\cr
&-\frac{\beta}{2}\int_{\R^3}\left[\left(U_{r,h}+\varphi\right)^2\left(V_{r,h}+\psi\right)^2-U_{r,h}^2V_{r,h}^2
-2\left(U_{r,h}^2V_{r,h}\psi+U_{r,h}V_{r,h}^2\varphi\right)\right.\cr
&\left.-2\left(U_{r,h}^2\psi^2+V_{r,h}^2\varphi^2+4U_{r,h}V_{r,h}\varphi\psi\right)\right].
\end{align*}

It is easy to check that
\begin{align*}
&\int_{\R^3}\left(\nabla u\nabla\varphi+P(|x|)u\varphi-3\mu_1U_{r,h}^2u\varphi\right)+\int_{\R^3}\left(\nabla v\nabla\psi+Q(|x|)v\psi-3\mu_2V_{r,h}^2v\psi\right)\cr
&-\beta\int_{\R^3}\left(U_{r,h}^2v\psi+V_{r,h}u\varphi+2U_{r,h}V_{r,h}u\psi+2U_{r,h}V_{r,h}v\psi\right)
\end{align*}
is a bounded bi-linear functional in $E$. Thus, there is a bounded linear operator $L$ from $E$ to $E$, such that
\begin{align*}
\langle L(u,v),(\varphi,\psi)\rangle=&\int_{\R^3}\left(\nabla u\nabla\varphi+P(|x|)u\varphi-3U_{r,h}^2u\varphi\right)+\int_{\R^3}\left(\nabla v\nabla\psi+Q(|x|)v\psi-3V_{r,h}^2v\psi\right)\cr
&-\beta\int_{\R^3}\left(U_{r,h}^2v\psi+V_{r,h}u\varphi+2U_{r,h}V_{r,h}u\psi+2U_{r,h}V_{r,h}v\psi\right).
\end{align*}
From the above analysis, we have the following two lemmas.
\begin{lemma}\label{lm2.1}
There is a constant $C>0$, independent of $k$, such that
\begin{align*}
\|L(u,v)\|\leq C\|(u,v)\|, \,\,\,\,\,\,\,\,\forall\,(u,v)\in E.
\end{align*}
\end{lemma}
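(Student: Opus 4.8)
The statement in question, Lemma \ref{lm2.1}, asserts the uniform boundedness of the linear operator $L$ on the constrained space $E$. Since $L$ is the linear operator associated with the quadratic form $L(\varphi,\psi)$ appearing in the expansion \eqref{eqs2.1}, the plan is simply to bound each term of $\langle L(u,v),(\varphi,\psi)\rangle$ by $\|(u,v)\|\,\|(\varphi,\psi)\|$ with a constant independent of $k$.

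\textbf{Approach.} The first two groups of terms, $\int_{\R^3}(\nabla u\nabla\varphi+P(|x|)u\varphi)$ and $\int_{\R^3}(\nabla v\nabla\psi+Q(|x|)v\psi)$, are exactly the inner products $\langle u,\varphi\rangle_P$ and $\langle v,\psi\rangle_Q$, so by Cauchy--Schwarz they are bounded by $\|u\|_P\|\varphi\|_P$ and $\|v\|_Q\|\psi\|_Q$ respectively, hence by $\|(u,v)\|\,\|(\varphi,\psi)\|$; no $k$ enters here. For the remaining terms the key point is to control the potential weights $U_{r,h}^2$, $V_{r,h}^2$ and $U_{r,h}V_{r,h}$ in an $L^\infty$ (or suitable $L^p$) norm uniformly in $k$. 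I would first observe that $U=\alpha W_{\mu_1}$-type profiles (and likewise $V$) are bounded translates of a fixed exponentially decaying function $W$, and that because the centers $\bar{x}_j,\underline{x}_j$ are separated by a distance comparable to $r/k\sim\ln k\to\infty$, the sum $U_{r,h}=\sum_j U_{\bar{x}_j}+\sum_j U_{\underline{x}_j}$ satisfies $\|U_{r,h}\|_{L^\infty}\le C$ and, more importantly, $\|U_{r,h}\|_{L^3}^3\le C k$ while the overlaps are negligible — but for an $L^\infty$ bound on $U_{r,h}^2$ one just needs that at any point only a bounded number of bumps contribute an $O(1)$ amount, the rest being exponentially small; this gives $\|U_{r,h}\|_{L^\infty}\le C$ with $C$ independent of $k$.

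\textbf{Key steps in order.} (1) Identify the first two integrals with $\langle u,\varphi\rangle_P+\langle v,\psi\rangle_Q$ and bound by Cauchy--Schwarz. (2) For a typical coupling term such as $\int_{\R^3} U_{r,h}^2 u\varphi$, use H\"older: $\big|\int U_{r,h}^2 u\varphi\big|\le \|U_{r,h}\|_{L^\infty}^2\,\|u\|_{L^2}\|\varphi\|_{L^2}$, and then invoke the Sobolev embedding $H^1_P(\R^3)\hookrightarrow L^2_{\mathrm{loc}}$ together with the fact that $U_{r,h}^2$ decays, so effectively $\|U_{r,h}^2 u\|_{L^{2}}\le C\|u\|_P$; alternatively use $\int U_{r,h}^2u\varphi\le\|U_{r,h}\|_{L^6}^2\|u\|_{L^6}\|\varphi\|_{L^6}\le C\|u\|_P\|\varphi\|_P$ via $H^1\hookrightarrow L^6$ in $\R^3$, which is cleanest. (3) Establish the uniform-in-$k$ bound $\|U_{r,h}\|_{L^6}, \|V_{r,h}\|_{L^6}\le C$: since $\|U_{\bar{x}_j}\|_{L^6}$ is a fixed constant and the $2k$ bumps have pairwise separation tending to infinity, $\|U_{r,h}\|_{L^6}^6\le C\sum_j\|U_{\bar{x}_j}\|_{L^6}^6 + (\text{cross terms})\le Ck$ — wait, this grows; so instead I use the pointwise bound that $U_{r,h}(y)\le C W(\mathrm{dist}(y,\{\text{centers}\}))\cdot(\text{geometric factor})$ giving $\|U_{r,h}\|_{L^\infty}\le C$ independent of $k$, which is the standard estimate in this circle of problems and follows from the separation condition $r/k\to\infty$. (4) Combine: every term of $\langle L(u,v),(\varphi,\psi)\rangle$ is bounded by $C\|(u,v)\|\,\|(\varphi,\psi)\|$, and taking the supremum over $\|(\varphi,\psi)\|=1$ yields $\|L(u,v)\|\le C\|(u,v)\|$.

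\textbf{Main obstacle.} The only nontrivial point is the uniform-in-$k$ control of $\|U_{r,h}\|_{L^\infty}$ (equivalently $\|V_{r,h}\|_{L^\infty}$), i.e. ensuring that the constant $C$ genuinely does not depend on $k$ even though the number of bumps is $2k$. This rests entirely on the separation of the centers: with $r\in S_k\sim\frac{m}{2\pi}k\ln k$ and $h\sim\frac{\pi(m+2)}{mk}$, consecutive points on each circle are $\sim\frac{2\pi r}{k}\sim m\ln k$ apart and the two circles are $\sim 2rh\sim(m+2)\ln k$ apart, so $W$'s exponential decay makes $\sum_j W_{\bar x_j}(y)\le C$ uniformly. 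Once that geometric lemma is in hand (it is essentially the content of the decay estimate \eqref{wy1} summed over a separated net), the rest is routine H\"older and Cauchy--Schwarz, and the lemma follows.
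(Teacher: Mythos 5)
Your argument is correct and is essentially the argument the paper intends (the paper itself offers no proof of Lemma \ref{lm2.1}, merely asserting that the bilinear form is bounded): the first two groups of terms are the inner products of $H^1_P\times H^1_Q$ and are handled by Cauchy--Schwarz, while the coupling terms reduce to the uniform-in-$k$ bound $\|U_{r,h}\|_{L^\infty}+\|V_{r,h}\|_{L^\infty}\le C$, which is exactly what the separation of the centers (distance $\gtrsim m\ln k$ between neighbouring bumps, i.e.\ Lemma \ref{lmA.1}) supplies. You were right to discard the $L^6$ variant you briefly floated --- besides the fact that $\|U_{r,h}\|_{L^6}$ grows like $k^{1/6}$, the four-fold H\"older application with all exponents equal to $6$ has $\sum 1/p_i=2/3\neq 1$ and is not valid on $\R^3$ --- and the $L^\infty$ route you settle on, $|\int U_{r,h}^2u\varphi|\le\|U_{r,h}\|_{L^\infty}^2\|u\|_{L^2}\|\varphi\|_{L^2}\le C\|u\|_P\|\varphi\|_P$, closes the proof.
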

Next, we discuss the invertibility of $L$.
\begin{lemma}\label{lm2.2}
There is a constant $\varrho>0$, independent of $k$ such that for any $r\in D_k$
\begin{align*}
\|L(u,v)\|\geq \varrho\|(u,v)\|, \,\,\,\,\,\,\,\,\forall\,(u,v)\in E.
\end{align*}
\end{lemma}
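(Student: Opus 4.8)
\medskip

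\noindent\emph{Proof proposal for Lemma \ref{lm2.2}.} The plan is to argue by contradiction through a blow-up at one bump. Suppose the conclusion fails: there are $k\to\infty$, parameters $(r_k,h_k)\in S_k\times D_h$ and $(u_k,v_k)\in E$ with $\|(u_k,v_k)\|=1$ and $\|L(u_k,v_k)\|\to0$, hence
\begin{align*}
\langle L(u_k,v_k),(\varphi,\psi)\rangle=o(1)\|(\varphi,\psi)\|,\qquad (\varphi,\psi)\in E.
\end{align*}
Taking $(\varphi,\psi)=(u_k,v_k)$ and using the expression for $\langle L(\cdot),(\cdot)\rangle$ gives
\begin{align*}
\|u_k\|_P^2+\|v_k\|_Q^2=\int_{\R^3}\!\big(3\mu_1U_{r,h}^2u_k^2+3\mu_2V_{r,h}^2v_k^2\big)+\beta\!\int_{\R^3}\!\big(U_{r,h}^2v_k^2+4U_{r,h}V_{r,h}u_kv_k+V_{r,h}^2u_k^2\big)+o(1).
\end{align*}
Since $\big(\|u_k\|_P+\|v_k\|_Q\big)^2=1$ forces $\|u_k\|_P^2+\|v_k\|_Q^2\geq\tfrac12$, and since $P,Q$ are bounded below (so $\|u_k\|_{H^1(\R^3)},\|v_k\|_{H^1(\R^3)}\leq C$), it is enough to prove that $\int U_{r,h}^2u_k^2,\ \int V_{r,h}^2v_k^2,\ \int U_{r,h}^2v_k^2,\ \int V_{r,h}^2u_k^2$ all tend to $0$ (the mixed term being then controlled by Cauchy--Schwarz), which yields $\|u_k\|_P^2+\|v_k\|_Q^2=o(1)$, a contradiction.

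\medskip

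\noindent\emph{Blow-up and the limit profile.} The $2k$ centers $\bar{x}_j,\underline{x}_j$ have mutual distances of order at least $\min\{r/k,\,rh\}$, which is comparable to $\ln k$, and $|\bar{x}_1|$ is of order $r$; hence for each fixed $R>0$ one has $B_R(\bar{x}_1)\subset\Omega_1$ once $k$ is large. Since the $\Omega_j$ are permuted by the symmetry group of $H_s$, $\|u_k\|_{H^1(\Omega_1)}^2=\tfrac1k\|u_k\|_{H^1(\R^3)}^2$, and likewise for $v_k$; thus, setting $w_k(z)=\sqrt k\,u_k(\bar{x}_1+z)$ and $\chi_k(z)=\sqrt k\,v_k(\bar{x}_1+z)$, one gets $\|w_k\|_{H^1(B_R)}^2=k\|u_k\|_{H^1(B_R(\bar{x}_1))}^2\leq k\|u_k\|_{H^1(\Omega_1)}^2=\|u_k\|_{H^1(\R^3)}^2\leq C$ uniformly in $R$, so along a subsequence $w_k\rightharpoonup w_\infty$, $\chi_k\rightharpoonup\chi_\infty$ in $H^1_{\mathrm{loc}}(\R^3)$ with $(w_\infty,\chi_\infty)\in H^1(\R^3)\times H^1(\R^3)$. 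As $k\to\infty$, $P(|\bar{x}_1+\cdot|)\to1$ and, only the bump at $\bar{x}_1$ surviving locally, $U_{r,h}(\bar{x}_1+\cdot)\to\alpha W$ and $V_{r,h}(\bar{x}_1+\cdot)\to\gamma W$ locally uniformly, where $W$ is the ground state of $-\Delta W+W=W^3$ and $(\alpha W,\gamma W)=(U,V)$ with $\alpha,\gamma$ as in \eqref{ag}. Testing the identity above against pairs built from $\phi,\chi\in C_c^\infty(\R^3)$ by placing rotated copies at the $\bar{x}_j$, symmetrizing in $y_2$, and subtracting a bounded linear combination of the pairs $(\bar{Y}_{lj},\bar{Z}_{lj})$ so as to land in $E$ (the relevant $2\times2$ Gram matrix is uniformly invertible), then dividing by $\sqrt k$ and letting $k\to\infty$, one finds that $(w_\infty,\chi_\infty)$ solves weakly the system obtained by linearizing \eqref{eqs1.3} at $(\alpha W,\gamma W)$; the correction drops out in the limit because, suitably rescaled, $(\bar{Y}_{l1},\bar{Z}_{l1})(\bar{x}_1+\cdot)$ converges to an element of the kernel of that (self-adjoint) operator.

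\medskip

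\noindent\emph{The limit profile vanishes.} By the nondegeneracy of $(U,V)=(\alpha W,\gamma W)$ recalled in the Introduction (valid since $\beta$ satisfies \eqref{lambda}),
\[(w_\infty,\chi_\infty)\in{\rm Span}\Big\{\Big(\tfrac{\alpha}{\gamma}\tfrac{\partial W}{\partial x_i},\tfrac{\partial W}{\partial x_i}\Big):i=1,2,3\Big\};\]
since $w_\infty$ is even in $z_2$ and $\partial W/\partial x_2$ is odd, the $i=2$ component vanishes. The constraint $\sum_j\int W_{\bar{x}_j}^2(\bar{Y}_{lj}u_k+\bar{Z}_{lj}v_k)=0$ equals, by symmetry, $k\int W_{\bar{x}_1}^2(\bar{Y}_{l1}u_k+\bar{Z}_{l1}v_k)=0$; translating to $\bar{x}_1$, multiplying by $\sqrt k$ and by the appropriate normalization ($1$ for $l=1$, $r^{-1}$ for $l=2$), and using $\partial\bar{x}_1/\partial r\to e_1$, $r^{-1}\partial\bar{x}_1/\partial h\to e_3$ as $h\to0$ together with the weighted weak convergence of $(w_k,\chi_k)$ (which needs a slowly growing cutoff radius to absorb the weighted tails), one obtains in the limit
\[\alpha\int W^2\tfrac{\partial W}{\partial x_i}\,w_\infty\,dz+\gamma\int W^2\tfrac{\partial W}{\partial x_i}\,\chi_\infty\,dz=0,\qquad i=1,3.\]
Inserting the span representation and using $\int W^2\tfrac{\partial W}{\partial x_i}\tfrac{\partial W}{\partial x_l}=\tfrac13\delta_{il}\int W^2|\nabla W|^2$ forces the two remaining coefficients to vanish, hence $(w_\infty,\chi_\infty)=(0,0)$. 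The identical reasoning at $\underline{x}_1$ (using the second family of constraints and $r^{-1}\partial\underline{x}_1/\partial h\to-e_3$) gives the corresponding conclusion there.

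\medskip

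\noindent\emph{Conclusion and main difficulty.} Expanding $U_{r,h}^2$ and discarding the exponentially small cross-bump products, $\int U_{r,h}^2u_k^2=k\int U_{\bar{x}_1}^2u_k^2+k\int U_{\underline{x}_1}^2u_k^2+o(1)$, with $k\int U_{\bar{x}_1}^2u_k^2=\alpha^2k\int_{\R^3}W^2(y-\bar{x}_1)u_k^2(y)\,dy$. For fixed $R$, the part over $\{|y-\bar{x}_1|<R\}$ is at most $\alpha^2\|W\|_\infty^2\|w_k\|_{L^2(B_R)}^2\to0$ by the strong $L^2_{\mathrm{loc}}$ convergence $w_k\to0$, while the part over $\{|y-\bar{x}_1|\geq R\}$, using $W^2(z)\leq Ce^{-2|z|}$, splitting the integral over the $\Omega_l$ and invoking $\|u_k\|_{L^2(\Omega_l)}^2=\tfrac1k\|u_k\|_{L^2(\R^3)}^2$ together with $\mathrm{dist}(\bar{x}_1,\Omega_l)\geq c\ln k$ for $l\neq1$, is at most $Ce^{-2R}+o(1)$; letting $k\to\infty$ then $R\to\infty$ gives $k\int U_{\bar{x}_1}^2u_k^2\to0$. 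The same estimate handles $k\int U_{\underline{x}_1}^2u_k^2$ and, since $\chi_k\to0$ as well, the terms $\int V_{r,h}^2v_k^2,\ \int U_{r,h}^2v_k^2,\ \int V_{r,h}^2u_k^2$. Therefore $\langle L(u_k,v_k),(u_k,v_k)\rangle=\|u_k\|_P^2+\|v_k\|_Q^2-o(1)\geq\tfrac12-o(1)$, contradicting $\langle L(u_k,v_k),(u_k,v_k)\rangle\leq\|L(u_k,v_k)\|\to0$. I expect the main obstacles to be (i) transporting the orthogonality conditions that define $E$ through the blow-up — which forces the introduction of the $E$-correcting combinations of approximate kernels and the verification that these become genuine kernel elements in the limit — and (ii) the summation over the far wedges in the last step, where the prefactor $k$ must be absorbed by the exponential decay of $W$ away from $\bar{x}_1$ and the equidistribution $\|u_k\|_{L^2(\Omega_l)}^2=O(1/k)$; the genuinely new feature relative to \cite{PW} is the second parameter $h$, handled via the two blow-up directions $e_1$ (from $r$) and $e_3$ (from $h$) and the rescaling by $|\partial\bar{x}_1/\partial h|\sim r$.
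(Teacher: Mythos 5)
Your proposal is correct and follows essentially the same strategy as the paper: a contradiction argument with blow-up at a single bump $\bar{x}_1$, passage to a limit profile solving the system linearized at $(\alpha W,\gamma W)$, use of the nondegeneracy for $\beta$ in $\Lambda$ together with the evenness in $x_2$ and the two transported orthogonality conditions (coming from the $\partial/\partial r$ and $\partial/\partial h$ derivatives, which in the limit become the $e_1$ and $e_3$ directions) to force the limit to vanish, and then the exponential decay of $U_{r,h},V_{r,h}$ away from the bump centers to derive the contradiction with the normalization. The only cosmetic difference is your normalization $\|(u_k,v_k)\|=1$ with a $\sqrt{k}$ rescaling in the blow-up, versus the paper's normalization $\|(u_k,v_k)\|^2=k$ without rescaling; your treatment of the transport of test functions into $E$ is in fact more explicit than the paper's, which delegates that step to a citation.
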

\begin{proof}
We argue by contradiction. Suppose that when $k\rightarrow+\infty$, there exist $h_k,r_k\in S_k$, $(u_k,v_k)\in E$ satisfying
\begin{align*}
\langle L(u_k,v_k),(\varphi,\psi)\rangle=o_k(1)\|(u_k,v_k)\|(\varphi,\psi)\|,\,\,\,(\varphi,\psi)\in E.
\end{align*}
We may assume that $\|(u_k,v_k)\|^2=k$. For convenience, we use $r$ to denote $r_k$. By symmetry, we see that
\begin{align}\label{eqs2.5}
\langle L(u_k,v_k),(\varphi,\psi)\rangle=&k\int_{\Omega_1}\left(\nabla u\nabla\varphi+P(|x|)u\varphi-3U_{r,h}^2u\varphi\right)+\int_{\Omega_1}\left(\nabla v\nabla\psi+Q(|x|)v\psi-3V_{r,h}^2v\psi\right)\cr
&-\beta\int_{\R^3}\left(U_{r,h}^2v\psi+V_{r,h}u\varphi+2U_{r,h}V_{r,h}u\psi+2U_{r,h}V_{r,h}v\psi\right)\cr
=&O_k(1)(\sqrt{k})\|(\varphi,\psi)\|,
\end{align}
and
\begin{align*}
\int_{\Omega_1}\left(|\nabla u_k|^2+P(|x|)u_k^2+|\nabla v_k|^2+Q(x)v_k^2\right)=1.
\end{align*}
Letting
$$\bar{u}_k(x)=u_k(x-\bar{x}_1),\,\,\,\bar{v}_k=v_k(x-\bar{x}_1),$$
for any $R>0$, since $|\bar{x}_2-\bar{x}_1|=2r\sqrt{1-h^2}\sin\frac{\pi}{k}\geq \frac{m}{4}\ln k$, we can choose $k$ large enough such that $B_R(\bar{x}_1)\subset\Omega_1$. As a result, we have
\begin{align*}
\int_{B_R(0)}\left(|\nabla \bar{u}_k|^2+P(|x|)\bar{u}_k^2+|\nabla \bar{v}_k|^2+Q(|x|)\bar{v}_k^2\right)\leq C.
\end{align*}
So we may assume that there exist $u,v\in H^1(\R^3)$ such that as $k\rightarrow+\infty$,
\begin{align*}
&\bar{u}_k\rightarrow u,\,\,\hbox{weekly in}\, H_{loc}^1(\R^3),\,\,\,\,\,\,\,\,\,\,\,\,\,\,\,\,\, \bar{u}_k\rightarrow u,\hbox{strongly in}\, L_{loc}^2(\R^3).\cr
&\bar{v}_k\rightarrow v,\,\,\hbox{weekly in}\, H_{loc}^1(\R^3),\,\,\,\,\,\,\,\,\,\,\,\,\,\,\,\,\, \bar{v}_k\rightarrow v,\hbox{strongly in}\, L_{loc}^2(\R^3).
\end{align*}
From the orthogonal conditions for functions in $E$,
\begin{align*}
\int_{\R^3}U_{\bar{x}_1}^2\Bigl(\frac{\partial U_{\bar{x}_1} }{\partial r}u_k+\frac{\partial V_{\bar{x}_1} }{\partial r}v_k\Bigr)=0,
\end{align*}
and the identity
\begin{align*}
\frac{\partial U_{\bar{x}_1}}{\partial r}=\sqrt{1-h^2}\frac{\partial U_{\bar{x}_1}}{\partial y_1}+h\frac{\partial U_{\bar{x}_1}}{\partial y_3},
\end{align*}
we can get
\begin{align}\label{eqs2.7}
&\sqrt{1-h^2}\int_{\R^3}W_{\bar{x}_1}^2\frac{\partial U_{\bar{x}_1}}{\partial y_1}u_k+h\int_{\R^3}W_{\bar{x}_1}^2\frac{\partial U_{\bar{x}_1}}{\partial y_3}u_k\cr
&\,\,\,\,\,\,\,\,\,\,\,\,\,\,\,\,\,\,\,\,\,\,\,\,\,\,\,\,\,\,\,\,\,\,\,\,\,\,\,\,\,\,\,\,\,\,\,\,+\sqrt{1-h^2}\int_{\R^3}W_{\bar{x}_1}^2\frac{\partial V_{\bar{x}_1}}{\partial y_1}v_k+h\int_{\R^3}W_{\bar{x}_1}^2\frac{\partial V_{\bar{x}_1}}{\partial y_3}v_k=0.
\end{align}
Similarly, combining
\begin{align*}
\int_{\R^3}W_{\bar{x}_1}^2\Bigl(\frac{\partial U_{\bar{x}_1}}{\partial h}u_k+\frac{\partial V_{\bar{x}_1}}{\partial h}v_k\Bigr)=0
\end{align*}
and
\begin{align*}
\frac{\partial U_{{\bar{x}_1}}}{\partial h}=-\frac{hr}{\sqrt{1-h^2}}\frac{\partial U_{{\bar{x}_1}}}{\partial y_1}+r\frac{\partial U_{{\bar{x}_1}}}{\partial y_3},
\end{align*}
we can get
\begin{align}\label{eqs2.8}
&\frac{h}{\sqrt{1-h^2}}\int_{\R^3}W_{\bar{x}_1}^2\frac{\partial U_{\bar{x}_1}}{\partial y_1}u_k
-\int_{\R^3}W_{\bar{x}_1}^2\frac{\partial U_{\bar{x}_1}}{\partial y_3}u_k
+\frac{h}{\sqrt{1-h^2}}\int_{\R^3}W_{\bar{x}_1}^2\frac{\partial V_{\bar{x}_1}}{\partial y_1}v_k\cr
&\,\,\,\,\,\,\,\,\,\,\,\,\,\,\,-\int_{\R^3}W_{\bar{x}_1}^2\frac{\partial V_{\bar{x}_1}}{\partial y_3}v_k=0.
\end{align}
From \eqref{eqs2.7} and \eqref{eqs2.8}, we have
\begin{align}\label{eqs2.07}
\int_{\R^3}W^2\Bigl(\frac{\partial U}{\partial y_1}u+\frac{\partial V}{\partial y_1}v\Bigr)=0,\,\,\,\int_{\R^3}W^2\Bigl(\frac{\partial U}{\partial y_3}u+\frac{\partial V}{\partial y_3}v\Bigr)=0.
\end{align}
Now, we claim that $(u,v)$ satisfies
\begin{eqnarray}\begin{cases}\label{eqs2.40}
-\Delta u+u-3\mu_1 U^2u-\beta V^2u-2\beta UVv=0,\,\,\,x\in\R^3,\cr
-\Delta v+v -3\mu_2 V^2v-\beta U^2v-2\beta UVu=0,\,\,\,x\in\R^3.
\end{cases}
\end{eqnarray}
Define
\begin{align*}
\tilde{E}^+=\Bigl\{(\varphi,\psi)\in H^1(\R^3)\times H^1(\R^3),\int_{\R^3}W^2(\frac{\partial U}{\partial y_1}u+\frac{\partial V}{\partial y_1}v)=0,\int_{\R^3}W^2(\frac{\partial U}{\partial y_3}u+\frac{\partial V}{\partial y_3}v)=0\Bigr\}.
\end{align*}

For any $R>0$, let $(\varphi,\psi)\in C_0^\infty(B_R(0))\times C_0^\infty(B_R(0))\bigcap\tilde{E}^+$ and be even in $x_2$. Then denote
\begin{align*}
(\varphi_k,\psi_k)=(\varphi(y-\bar{x}_1),\psi(y-\bar{x}_1))\in C_0^\infty(B_R(\bar{x}_1))\times C_0^\infty(B_R(\bar{x}_1))\subset\Omega_1,
\end{align*}
if $k$ is large enough. We may identify $(\varphi_k,\psi_k)$ as elements in $E$ by the values outside $\Omega_1$ with symmetry. With the argument in\cite{PW}, we find
\begin{align}\label{eqs2.10}
\int_{\Omega_1}\left(\nabla u_k\nabla\varphi_k+P(x)u_k\varphi_k-3\mu_1 U_{r,h}^2u_k\varphi_k\right)
\rightarrow\int_{\R^3}\left(\nabla u\nabla\varphi+u\varphi-3\mu_1 U^2u\varphi\right),
\end{align}
\begin{align}\label{eqs2.11}
\int_{\Omega_1}\left(\nabla v_k\nabla\psi_k+Q(x)v_k\psi_k-3\mu_2 V_{r,h}^2v_k\psi_k\right)
\rightarrow\int_{\R^3}\left(\nabla v\nabla\psi+v\psi-3\mu_2 V^2v\psi\right),
\end{align}
and
\begin{align}\label{eqs2.12}
&\int_{\Omega_1}\left(U_{r,h}^2v_k\psi_k+V_{r,h}^2u_k\varphi_k+2U_{r,h}V_{r,h}u_k\psi_k+2U_{r,h}V_{r,h}v_k\psi_k\right)\cr
&\,\,\,\,\,\,\,\,\,\,\,\,\,\,\,\,\,\,\,\,\,\,\,\,\,\,\,\,\,\,\,\,\,\,\,\,\,\,\,\,\,\,\,\,\,\,\,\,\rightarrow
\int_{\R^3}\left(U^2v\psi+V^2u\varphi+2UVu\psi+2UVv\varphi\right).
\end{align}
Inserting \eqref{eqs2.10}-\eqref{eqs2.12} into \eqref{eqs2.5}, we see
\begin{align}\label{eqs2.13}
&\int_{\R^3}\left(\nabla u\nabla\varphi+P(|x|)u\varphi-3U^2u\varphi\right)+\int_{\R^3}\left(\nabla v\nabla\psi+Q(|x|)v\psi-3V^2v\psi\right)\cr
&-\beta\int_{\R^3}\left(U^2v\psi+Vu\varphi+2UVu\psi+2UVv\psi\right)=0.
\end{align}

However, since $u$ and $v$ are even in $x_2$, \eqref{eqs2.13} holds for any function $(\varphi,\psi)\in C_0^\infty(B_R(0))\times C_0^\infty(B_R(0))\bigcap \tilde{E}^{+}$. By the density of $C_0^\infty(B_R(0))\times C_0^\infty(B_R(0))$ in $H^1(\R^3)\times H^1(\R^3)$, we see
\begin{align*}
&\int_{\R^3}\left(\nabla u\nabla\varphi+P(|x|)u\varphi-3U^2u\varphi\right)+\int_{\R^3}\left(\nabla v\nabla\psi+Q(|x|)v\psi-3V^2v\psi\right)\cr
&-\beta\int_{\R^3}\left(U^2v\psi+Vu\varphi+2UVu\psi+2UVv\psi\right)=0,\,\forall\,(\varphi,\psi)\in \tilde{E}^{+}.
\end{align*}
Noting that $(U,V)=(\alpha W,\gamma W)$ and $W$ solves \eqref{eqs1.3}, we can verify that \eqref{eqs2.13} holds for $(\varphi,\psi)=(\frac{\partial U}{\partial y_1},\frac{\partial V}{\partial y_1})$ and $(\varphi,\psi)=(\frac{\partial U}{\partial y_3},\frac{\partial V}{\partial y_3})$. Then \eqref{eqs2.13} is true for any $(\varphi,\psi)\in H^1(\R^3)\times H^1(\R^3)$. So we prove \eqref{eqs2.40}.

By using the non-degenerate results for $(U,V)$, we work in the space of functions which are even in $x_2$. Thus
\begin{align}\label{eqs2.9}
(u,v)=\Bigl(c_{11}\frac{\partial U}{\partial y_1}+c_{12}\frac{\partial U}{\partial y_3},c_{21}\frac{\partial V}{\partial y_1}+c_{22}\frac{\partial V}{\partial y_3}\Bigr),
\end{align}
which combining \eqref{eqs2.07} and \eqref{eqs2.9} yields that $c_{11}=c_{12}=c_{21}=c_{22}=0$.

As a result, we have
$$
\int_{B_R(\bar{x}_1)}\left(u_k^2+v_k^2\right)=o(1),\,\,\,\,\,\, \forall\, R>0.
$$
From Lemmas \ref{lmA.1}-\ref{lmA.2}, we have
$$
U_{r,h}\leq Ce^{-\frac{|y-\bar{x}_1|}{2}},\,\,\,\,\,\,\,\, V_{r,h}\leq Ce^{-\frac{|y-\bar{x}_1|}{2}},\,\,\,\,\,\,\,x\in\Omega_1.
$$
Thus
\begin{align}\label{eqs221}
&\int_{\Omega_1}
\left(|\nabla u_k|^2+P(|x|)u_k^2-3\mu_1U_{r,h}^2u_k^2\right)+\int_{\Omega_1}\left(|\nabla v_k|^2+Q(|x|)v_k^2-3\mu_2V_{r,h}^2v_k^2\right)\cr
=&\int_{\Omega_1}\left(|\nabla u_k|^2+P(|x|)u_k^2\right)
+\int_{\Omega_1}\left(|\nabla v_k|^2+Q(|x|)v_k^2\right)+O(e^{-\frac{R}{2}})\int_{\Omega_1}\left(u_k^2+v_k^2\right),
\end{align}
since there holds
\begin{align}\label{eqs222}
\int_{\Omega_1}\left(V_{r,h}^2u_k^2+4U_{r,h}V_{r,h}u_kv_k+U_{r,h}^2v_k^2\right)=o\Bigl(e^{-\frac{R}{2}}\int_{\Omega_1}(u_k^2+v_k^2)\Bigr).
\end{align}
Inserting \eqref{eqs221} and \eqref{eqs222} into \eqref{eqs2.5}, we find
\begin{align*}
o_k(1)=&\int_{\Omega_1}\left(|\nabla u_k|^2+P(|x|)u_k^2-3\mu_1 U_{r,h}^2u_k^2\right)+\int_{\Omega_1}\left(|\nabla v_k|^2+Q(x)v_k^2-3\mu_2V_{r,h}^2v_k^2\right)\cr
&-\beta\int_{\Omega_1}\left(V_{r,h}^2u_k^2+4U_{r,h}V_{r,h}u_kv_k+U_{r,h}^2v_k^2\right)
=1+O\left(e^{-\frac{R}{2}}\right),
\end{align*}
which is impossible for large $k$ and large $R$. As a result, we complete the proof.
\end{proof}
\begin{lemma}\label{lm2.3}
There exists a constant $C>0$, independent of $k$, such that
\begin{align}
\|R^{(i)}(\varphi,\psi)\|\leq C\|(\varphi,\psi)\|^{(3-i)},\,i=0,1,2.
\end{align}
\end{lemma}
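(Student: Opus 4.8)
The statement is the standard estimate on the remainder term $R(\varphi,\psi)$ in the energy expansion \eqref{eqs2.1}, together with its first two derivatives; the required bound is $\|R^{(i)}(\varphi,\psi)\|\le C\|(\varphi,\psi)\|^{3-i}$ for $i=0,1,2$. The approach is the usual one: $R$ collects precisely the terms of order three and higher in $(\varphi,\psi)$ coming from the quartic nonlinearity $\mu_1 u^4+\mu_2 v^4$ and the coupling term $u^2v^2$. Concretely, one writes the Taylor remainder pointwise. For the quartic part, $\mu_1\big[(U_{r,h}+\varphi)^4-U_{r,h}^4-4U_{r,h}^3\varphi-6U_{r,h}^2\varphi^2\big]=\mu_1\big(4U_{r,h}\varphi^3+\varphi^4\big)$ (and similarly for the $v$-component), while for the coupling term the leftover after subtracting the constant, linear and quadratic pieces is a polynomial each of whose monomials has total degree $\ge 3$ in $(\varphi,\psi)$, with coefficients that are monomials in $U_{r,h},V_{r,h}$ of complementary degree $\le 1$. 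Thus pointwise one has, schematically,
\[
|R(\varphi,\psi)|\le C\Big(|U_{r,h}|+|V_{r,h}|\Big)\big(|\varphi|+|\psi|\big)^3+C\big(|\varphi|+|\psi|\big)^4 .
\]

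First I would estimate $R(\varphi,\psi)$ itself ($i=0$). Integrate the pointwise bound and apply Hölder's inequality in $L^4(\R^3)$: the cubic-times-linear term is controlled by $\|U_{r,h}\|_{L^4}\,\|\varphi\|_{L^4}^3$ (and the analogous $V$ term), and the quartic term by $\|\varphi\|_{L^4}^4+\|\psi\|_{L^4}^4$. By the Sobolev embedding $H^1(\R^3)\hookrightarrow L^4(\R^3)$ (so $\|\varphi\|_{L^4}\le C\|\varphi\|_P$, etc.), and since $\|U_{r,h}\|_{L^4}$, $\|V_{r,h}\|_{L^4}$ are bounded uniformly in $k$ — indeed $U_{r,h},V_{r,h}$ are sums of $2k$ well-separated translates of fixed exponentially decaying profiles, so their $L^4$ norms stay bounded (this uses the separation $|\bar x_i-\bar x_j|\ge \tfrac m4\ln k$ already recorded in the proof of Lemma 2.2) — one gets $|R(\varphi,\psi)|\le C\|(\varphi,\psi)\|^3+C\|(\varphi,\psi)\|^4\le C\|(\varphi,\psi)\|^3$ on the relevant bounded region of $E$. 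Next, for $i=1$, the Fréchet derivative $R'(\varphi,\psi)$ acting on a test pair $(\xi,\eta)\in E$ is, after differentiating the pointwise identities, a sum of integrals of the form $\int (|U_{r,h}|+|V_{r,h}|)(|\varphi|+|\psi|)^2(|\xi|+|\eta|)$ and $\int (|\varphi|+|\psi|)^3(|\xi|+|\eta|)$; Hölder in $L^4$ (four factors) together with the Sobolev embedding gives $|\langle R'(\varphi,\psi),(\xi,\eta)\rangle|\le C\|(\varphi,\psi)\|^2\|(\xi,\eta)\|$, hence $\|R'(\varphi,\psi)\|\le C\|(\varphi,\psi)\|^2$. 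For $i=2$, one more differentiation produces terms $\int(|U_{r,h}|+|V_{r,h}|)(|\varphi|+|\psi|)(|\xi_1|+|\eta_1|)(|\xi_2|+|\eta_2|)$ and $\int(|\varphi|+|\psi|)^2(|\xi_1|+|\eta_1|)(|\xi_2|+|\eta_2|)$, and the same Hölder/Sobolev argument yields $\|R''(\varphi,\psi)\|\le C\|(\varphi,\psi)\|$.

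There is no genuine obstacle here; the lemma is routine. The only point requiring a modicum of care is the uniform-in-$k$ bound on $\|U_{r,h}\|_{L^4}$ and $\|V_{r,h}\|_{L^4}$, which is why the estimate must be set up with these norms factored out rather than absorbed into constants — but this boundedness is immediate from the exponential decay of $W_{\mu_1},W_{\mu_2}$ (hence of $U,V$) in \eqref{wy1} and the fact that the $2k$ centres $\bar x_j,\underline x_j$ are pairwise separated by a distance $\gtrsim\ln k$, so the overlaps contribute a geometric series summing to $O(1)$. Everything else is Hölder's inequality plus the Sobolev embedding $H^1(\R^3)\hookrightarrow L^4(\R^3)$, applied term by term to the explicitly written-out polynomial expression for $R$ and its derivatives; I would present it compactly as above rather than expanding every monomial.
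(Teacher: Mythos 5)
Your overall strategy --- write $R$ and its derivatives explicitly as polynomials in $(\varphi,\psi)$ whose monomials all have degree at least $3-i$ in $(\varphi,\psi)$, then apply H\"older and the Sobolev embedding term by term --- is exactly the route the paper takes, and the conclusion is correct. However, the one claim you single out as "the only point requiring a modicum of care" is false as stated: $\|U_{r,h}\|_{L^4(\R^3)}$ and $\|V_{r,h}\|_{L^4(\R^3)}$ are \emph{not} bounded uniformly in $k$. Since $U_{r,h}$ is a sum of $2k$ well-separated translates of a fixed exponentially decaying profile, the separation makes the cross terms exponentially small but does nothing to prevent the accumulation of mass:
\[
\|U_{r,h}\|_{L^4}^4=\sum_{j=1}^{k}\int_{\R^3}U_{\bar x_j}^4+\sum_{j=1}^{k}\int_{\R^3}U_{\underline x_j}^4+o(k)\sim 2k\int_{\R^3}U^4,
\]
so $\|U_{r,h}\|_{L^4}\sim (2k)^{1/4}\to\infty$. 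Well-separatedness controls the $L^\infty$ norm uniformly, not the $L^p$ norms for finite $p$. Consequently your H\"older splitting $\int_{\R^3} U_{r,h}|\varphi|^3\le\|U_{r,h}\|_{L^4}\|\varphi\|_{L^4}^3$ only yields a constant of order $k^{1/4}$, which defeats the requirement that $C$ be independent of $k$; the same problem recurs in every term of $R'$ and $R''$ that carries a factor of $U_{r,h}$ or $V_{r,h}$.

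The repair is one line and is what the paper's proof implicitly does when it passes from the explicit expression for $R$ to the bound $C\int_{\R^3}(|\varphi|^3+|\psi|^3+\cdots)$: use the uniform bound $\|U_{r,h}\|_{L^\infty}+\|V_{r,h}\|_{L^\infty}\le C$ (which \emph{does} follow from the exponential decay of $W$ and the separation of the centres, cf.\ Lemma \ref{lmA.1}) to absorb the coefficient functions into the constant, and then estimate the remaining pure powers of $(\varphi,\psi)$ by the embeddings $H^1(\R^3)\hookrightarrow L^p(\R^3)$, $2\le p\le 6$, e.g.\
\[
\Bigl|\int_{\R^3}U_{r,h}\varphi^3\Bigr|\le C\int_{\R^3}|\varphi|^3\le C\|\varphi\|_P^3 .
\]
With that single modification (and the observation, which you do make, that the quartic leftovers $\|(\varphi,\psi)\|^{4-i}$ are dominated by $\|(\varphi,\psi)\|^{3-i}$ on the bounded set where the reduction is carried out), your argument coincides with the paper's proof.
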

\begin{proof}
Calculating directly, we have that for any $(u,v)\in E$, $(\xi,\eta)\in E$,
\begin{align*}
|R(\varphi,\psi)|\leq&\left|\int_{\R^3}\mu_1 U_{r,h}\varphi^3+\mu_2V_{r,h}\psi^3+\frac{\mu_1}{4}\varphi^4+\frac{\mu_2}{4}\psi^4\right|\cr
&+\frac{|\beta|}{2}\int_{\R^3}\left|(U_{r,h}+\varphi)^2(V_{r,h}+\psi)^2-U_{r,h}^2V_{r,h}^2-2(U_{r,h}V_{r,h}^2\varphi+U_{r,h}^2V_{r,h}\psi)\right.\cr
&\left.-2(U_{r,h}^2\psi^2+V_{r,h}^2\varphi^2+4U_{r,h}V_{r,h}\varphi\psi)\right|\cr
\leq&C\int_{\R^3}\left(|\varphi|^3+|\psi|^3+|\varphi|^4+|\psi|^4+\varphi^2|\psi|+\psi^2|\varphi|\right)\cr
\leq&C\|(\varphi,\psi)\|^3+\|(\varphi,\psi)\|^4,
\end{align*}
\begin{align*}
|\langle R'(\varphi,\psi),(u,v)\rangle|=&\left|\int_{\R^3}\left(3\mu_1 U_{r,h}\varphi^2u+\mu_1\varphi^3u-\beta U_{r,h}\psi^2u-2\beta V_{r,h}\varphi^2\psi u-\beta\varphi\psi^2u\right)\right.\cr
&\left.+\left(3\mu_2 V_{r,h}\psi^2v+\mu_2\psi^3v-\beta V_{r,h}\varphi^2v-2\beta U_{r,h}\varphi\psi^2 v-\beta\varphi^2\psi v\right)\right|\cr
\leq& C(\|\varphi\|_P^2+\|\varphi\|_P^3+\|\psi\|_P^2+\|\psi\|_P^3+\|\varphi\|_P^2\|\psi\|_Q+\|\varphi\|_P\|\psi\|_Q^2)\times(\|u\|_P+\|v\|_Q)\cr
\leq &C(\|(\varphi,\psi)\|^2+\|(\varphi,\psi)\|^3)\|(u,v)\|,
\end{align*}
and similarly,
\begin{align*}
|\langle R''(\varphi,\psi)(u,v),(\xi,\eta)\rangle|\leq C(\|(\varphi,\psi)\|+\|(\varphi,\psi)\|^2)\|(u,v)\|\|(\xi,\eta)\|.
\end{align*}
Hence, the results follow.
\end{proof}

\begin{lemma}\label{lm2.4}
There is a constant $C>0$, independent of $k$, such that
\begin{align}\label{eqs211}
\|\ell_k\|\leq C\Bigl(\frac{k}{r^m}+\frac{k}{r^n}+k^{\frac{1}{2}}e^{-\frac{2\pi r}{k}}\frac{k}{r}\Bigr).
\end{align}
\end{lemma}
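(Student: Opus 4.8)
The plan is to estimate the dual norm $\|\ell_k\|=\sup\{|\ell(\varphi,\psi)|:(\varphi,\psi)\in E,\ \|(\varphi,\psi)\|\le1\}$ by splitting $\ell$ into the three groups of terms from which it is built: the potential errors carrying $P(|x|)-1$ and $Q(|x|)-1$; the error produced by replacing $\sum_{j=1}^kU_{\bar{x}_j}^3+\sum_{j=1}^kU_{\underline{x}_j}^3$ by $U_{r,h}^3$ (together with its $V$-analogue); and the error produced by the same operation on the $\beta$-coupled terms $U_{r,h}^2V_{r,h}$ and $U_{r,h}V_{r,h}^2$. For each group I would use the $H_s$-symmetry to replace the integral over $\R^3$ by $k$ times the same integral over the fundamental sector $\Omega_1$, apply the Cauchy--Schwarz inequality on $\Omega_1$, and then invoke the elementary bound
\[
\|\varphi\|_{L^2(\Omega_1)}\le C\,k^{-1/2}\,\|\varphi\|_P,\qquad \|\psi\|_{L^2(\Omega_1)}\le C\,k^{-1/2}\,\|\psi\|_Q,
\]
which follows from the $k$-fold symmetry together with $0<\inf_{\R^3}P\le\sup_{\R^3}P<\infty$ (and likewise for $Q$). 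Since $(U,V)=(\alpha W,\gamma W)$ with $W$ the positive radial solution of $-\Delta W+W=W^3$ obeying the decay \eqref{wy1}, after Cauchy--Schwarz every coefficient is an $L^2(\Omega_1)$-norm of a product of translates of $W$ (times $P(|x|)-1$ in the first group), controlled either by $(P)$--$(Q)$ or by the interaction estimates of Lemmas~\ref{lmA.1}--\ref{lmA.2}.

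For the potential group, on $\Omega_1$ one has $U_{r,h}=U_{\bar{x}_1}+U_{\underline{x}_1}+(\text{exponentially small})$ with $\bar{x}_1,\underline{x}_1$ at distance $r$ from the origin, so $(P)$ gives $|P(|x|)-1|\le Cr^{-m}$ on the region where $U_{\bar{x}_1},U_{\underline{x}_1}$ are not negligible, while \eqref{wy1} controls the far tails; hence $\|(P(|x|)-1)U_{r,h}\|_{L^2(\Omega_1)}\le Cr^{-m}$ and
\[
\begin{aligned}
\Bigl|\int_{\R^3}(P(|x|)-1)U_{r,h}\varphi\Bigr|
&=k\Bigl|\int_{\Omega_1}(P(|x|)-1)U_{r,h}\varphi\Bigr|\\
&\le k\,\|(P(|x|)-1)U_{r,h}\|_{L^2(\Omega_1)}\,\|\varphi\|_{L^2(\Omega_1)}
\le C\,k^{1/2}\,r^{-m}\,\|\varphi\|_P,
\end{aligned}
\]
which is bounded by the first summand in \eqref{eqs211}; the $Q$-term is treated identically and gives the second.

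For the two nonlinear groups I would expand the difference on $\Omega_1$: writing $U_{r,h}=U_{\bar{x}_1}+U_{\underline{x}_1}+\widetilde\Sigma$ with $\widetilde\Sigma=\sum_{j=2}^k(U_{\bar{x}_j}+U_{\underline{x}_j})$,
\[
U_{r,h}^3-\sum_{j=1}^kU_{\bar{x}_j}^3-\sum_{j=1}^kU_{\underline{x}_j}^3
=3U_{\bar{x}_1}^2U_{\underline{x}_1}+3U_{\bar{x}_1}U_{\underline{x}_1}^2+3\bigl(U_{\bar{x}_1}+U_{\underline{x}_1}\bigr)^2\widetilde\Sigma+\cdots,
\]
where, on $\Omega_1$, every term that is not exponentially negligible is a product of translates of $W$ at two distinct centres among $\{\bar{x}_\ell,\underline{x}_\ell\}_{\ell=1}^k$. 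Testing against $\varphi$, using Cauchy--Schwarz and $\|\varphi\|_{L^2(\Omega_1)}\le Ck^{-1/2}\|\varphi\|_P$, one is reduced to sums of the type $\sum_{i\neq j}\|W_{x_i}^2W_{x_j}\|_{L^2(\Omega_1)}$, and \eqref{wy1} yields $\|W_{x_i}^2W_{x_j}\|_{L^2(\Omega_1)}\le C|x_i-x_j|^{-1}e^{-|x_i-x_j|}$. In the range $(r,h)\in S_k\times D_h$ one has $|\bar{x}_1-\bar{x}_2|=2r\sqrt{1-h^2}\sin\frac{\pi}{k}\sim\frac{2\pi r}{k}\sim m\ln k$ and $|\bar{x}_1-\underline{x}_1|=2rh\sim(m+2)\ln k$, so the two same-circle nearest neighbours dominate, the remaining pairwise contributions form a convergent geometric series with $k$-independent sum, and multiplying by the outer factor $k$ gives a bound $C\,k^{1/2}\,\frac{k}{r}\,e^{-2\pi r/k}\,\|\varphi\|_P$, the third summand in \eqref{eqs211}. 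The $\beta$-coupled group is handled in the same way, since $U_x^2V_x=\alpha^2\gamma\,W_x^3$ and $U_xV_x^2=\alpha\gamma^2\,W_x^3$ have the cubic profile just treated, so those terms are absorbed into the same estimate. Adding the three bounds proves \eqref{eqs211}.

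The step I expect to be the real obstacle is the nonlinear one: one must extract the \emph{sharp} exponential rate $e^{-|x_i-x_j|}$ together with the polynomial prefactor $|x_i-x_j|^{-1}\sim k/r$ for each pairwise interaction — which forces the use of the precise asymptotics \eqref{wy1}, not merely the crude bound $W\le Ce^{-|\cdot|}$ — and then organise the double sum over the $2k$ centres sitting on the top and bottom circles, separating the same-circle neighbours (distance $\sim 2\pi r/k$) from the top--bottom pairs (distance $\sim 2rh$), and check that the geometric tail converges uniformly in $k$, so that the powers of $k$ in \eqref{eqs211} come out exactly as stated.
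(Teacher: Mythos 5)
Your proposal is correct and follows essentially the same route as the paper: symmetry reduction to the fundamental sector $\Omega_1$ with the factor $k$, Cauchy--Schwarz against $\|\varphi\|_{L^2(\Omega_1)}$, the decay \eqref{wy1} together with the pairwise interaction estimates of Lemmas~\ref{lmA.1}--\ref{lmA.2} (nearest same-circle neighbours at distance $\sim 2\pi r/k$ dominating, the top--bottom pairs at distance $\sim 2rh$ being absorbed), and the reduction of the $\beta$-coupled terms to the cubic case via $U=\frac{\alpha}{\gamma}V$. The only cosmetic difference is that you package the potential-term estimate as an $L^2(\Omega_1)$ bound on $(P(|x|)-1)U_{r,h}$ whereas the paper splits the domain into a ball around $\bar{x}_1$ and the remaining sectors with pointwise bounds; the resulting estimates coincide.
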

\begin{proof}
First, we assume that $l$ is even, for $5\leq l\leq\frac{k}{2}$, $y\in \Omega_k^+$. We claim that $|y-\bar{x}_1|\geq (4-\tau)\frac{\pi r}{k}$ for some $\tau>0$ small and provided that $k$ is large enough.

In fact, we have
\begin{align*}
|y-\bar{x}_1|\geq&|\bar{x}_1-\bar{x}_l|-|y-\bar{x}_l|\geq 2r\sqrt{1-h^2}\sin\frac{(l-1)\pi}{k}-\frac{4\pi r}{k}\cr
\geq&(4-\tau)\frac{\pi r}{k},\,\,\,\,\hbox{if}\,\,\, |y-\bar{x}_k|\leq\frac{4\pi r}{k}
\end{align*}
and
\begin{align*}
|y-\bar{x}_1|\geq|y-\bar{x}_l|\geq4\frac{\pi r}{k},\,\,\,\,\,\hbox{if}\,\,\,|y-\bar{x}_l|\geq4\frac{\pi r}{k}.
\end{align*}
Using the symmetric property for the functions, we know that
\begin{align}\label{eqs2.1700}
&\sum\limits_{i=1}^k\int_{\R^N}(P(x)-1)(U_{\bar{x}_i}+U_{\underline{x}_i})\varphi=2k\int_{\R^N}(P(|x|)-1)U_{\bar{x}_1}\varphi\cr
\leq&C k\int_{\bigcup\limits_{l=1}^4(\Omega_l^++\Omega_l^-)}+\int_{\bigcup\limits_{4<l\leq \frac{k}{2}}(\Omega_l^++\Omega_l^-)}(P(|x|)-1)U_{\bar{x}_1}\varphi\cr
\leq& Ck\int_{\bigcup\limits_{l=1}^4\Omega_l^+\bigcap B_{\delta|\bar{x}_1|}(\bar{x}_1)}+\int_{\bigcup\limits_{l=1}^4\Omega_l^+\setminus B_{\delta|\bar{x}_1|}(\bar{x}_1)}
+Ck\int_{\bigcup\limits_{4<l\leq\frac{k}{2}}\Omega_l^+}(P(x)-1)U_{\bar{x}_1}\varphi,
\end{align}
where $\delta_0>0$ is small constant. We can show that there exist positive constants
\begin{align}\label{eqs2.130}
(P(|x|)-1)U_{\bar{x}_1}\leq\frac{c}{|y-\bar{x}_1+\bar{x}_1|^m}e^{-|y-\bar{x}_1|}\leq \frac{C}{r^m}e^{-|y-\bar{x}_1|},\,\,\,\,y\in \bigcup\limits_{l=1}^4\Omega_l^+\bigcap B_{\delta|\bar{x}_1|}(\bar{x}_1),
\end{align}
\begin{align}\label{eqs2.140}
(P(|x|)-1)U_{\bar{x}_1}\leq Ce^{-|y-\bar{x}_1|}\leq Ce^{-\delta r},\,\,\,\,y\in \bigcup\limits_{l=1}^4\Omega_l^+\setminus B_{\delta|\bar{x}_1|}(\bar{x}_1)
\end{align}
and
\begin{align}\label{eqs2.150}
(P(|x|)-1)U_{\bar{x}_1}\leq Ce^{-|y-\bar{x}_1|}\leq Ce^{-(4-\delta)\frac{\pi r}{k}},\,\,\,\,y\in \bigcup\limits_{4<l<\frac{k}{2}}\Omega_l^+.
\end{align}
Combining \eqref{eqs2.1700}-\eqref{eqs2.150}, we get
\begin{align}\label{eqs2.160}
\sum\limits_{i=1}^k\int_{\R^3}(P(x)-1)(U_{\bar{x}_i}+U_{\underline{x}_i})\varphi\leq C\Big(\frac{1}{r^m}+\frac{1}{r^{2m-1-\tau}}\Big)\|\varphi\|_{L^2(\R^3)}.
\end{align}
Similarly, we have
\begin{align}\label{2.170}
\sum\limits_{i=1}^k\int_{\R^3}(Q(x)-1)(V_{\bar{x}_i}+V_{\underline{x}_i})\psi\leq C\Big(\frac{1}{r^n}+\frac{1}{r^{2n-1-\tau}}\Big)\|\psi\|_{L^2(\R^3)}.
\end{align}

Note that it is obvious that
\begin{align}\label{eqs2.180}
U_{\bar{x}_1}+U_{\underline{x}_i}\geq U_{\bar{x}_i}+U_{\underline{x}_i},\,\,\,\,\,\,\,\,y\in\Omega_1,
\end{align}
\begin{align*}
U_{\bar{x}_j}\geq U_{\underline{x}_j},\,\,\,\,\,\,\,\hbox{for}\,\,\,\,\,\,\,\,y\in \Omega_1^+.
\end{align*}
By symmetry, we have
\begin{align}\label{eqs2.20}
&\int_{\R^3}\Big[U_{r,h}^3-\big(\sum\limits_{i=1}^kU_{\bar{x}_i}^3
+\sum\limits_{i=1}^kU_{\underline{x}_i}^3\big)\Big]\varphi
=2k\int_{\Omega_1^+}\Big[U_{r,h}^3
-\big(\sum\limits_{i=1}^kU_{\bar{x}_i}^3
+\sum\limits_{i=1}^kU_{\underline{x}_i}^3\big)\Big]\varphi\cr
\leq & k\int_{\Omega_1^+}\Big[U_{\bar{x}_1}^2\big(\sum\limits_{i=2}^kU_{\bar{x}_i}
+U_{\underline{x}_1}\big)+\big(\sum\limits_{i=2}^kU_{\bar{x}_i}
+U_{\underline{x}_1}\big)^3\Big]|\varphi|\cr
\leq&k^{\frac{1}{2}}\sum\limits_{i=2}^ke^{-|\bar{x}_1-\bar{x}_i|}\|\varphi\|_{L^2(\R^3)}\leq Ck^{\frac{1}{2}}e^{-\frac{2\pi r}{k}}\frac{k}{r}\|\varphi\|_{L^2(\R^3)}.
\end{align}
Finally, noting that $U=\frac{\alpha}{\gamma}V$, we have
\begin{align}\label{eqs2.21}
\int_{\R^3}\Big|U_{r,h}^2V_{r,h}-
\big(\sum\limits_{j=1}^kV_{\bar{x}_j}U_{\bar{x}_j}^2
+V_{\underline{x}_j}U_{\underline{x}_j}^2\big)\Big|\psi
\leq& C\int_{\R^3}\Big|\big[U_{r,h}^3-\big(\sum\limits_{j=1}^kU_{\bar{x}_j}^3
+\sum\limits_{j=1}^kU_{\underline{x}_j}^3\big)\big]\psi\Big|\cr
\leq& Ck^{\frac{1}{2}}e^{-\frac{2\pi r}{k}}\frac{k}{r}\|\psi\|.
\end{align}
The proof of \eqref{eqs211} follows from \eqref{eqs2.160} and \eqref{eqs2.21}.
\end{proof}
\begin{proposition}
There is an integer $k_0>0$ such that for each $k\geq k_0$, there is a $C^1$ map from $D_k$ to $H_{P}\times H_{Q}$:$(\varphi,\psi)=(\varphi(r),\psi(r))$, $r=|x^1|$ satisfying $(\varphi,\psi)\in E$ and
$$\bigr\langle\frac{\partial J(\varphi,\psi)}{\partial(\varphi,\psi)},(g,h)\bigr\rangle=0,\,\,\,\forall \,(g,h)\in E.$$
Moreover, there is a positive constant $C$ such that
\begin{align*}
\|(\varphi,\psi)\|\leq C\Big(\frac{k}{r^m}+\frac{k}{r^n}+k^{\frac{1}{2}}e^{-\frac{2\pi r}{k}}\frac{k}{r}\Big).
\end{align*}
\end{proposition}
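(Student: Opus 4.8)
The plan is to recast the problem of finding a critical point of $J$ on $E$ as a fixed point problem and solve it by the contraction mapping principle. Since $(\varphi,\psi)\in E$ is a critical point of $J$ restricted to $E$ if and only if the projection onto $E$ of the gradient $\ell+L(\varphi,\psi)+R'(\varphi,\psi)$ vanishes, and since by Lemma \ref{lm2.2} the operator $L:E\to E$ is invertible with $\|L^{-1}\|\leq \varrho^{-1}$ uniformly in $k$ (and in $(r,h)\in S_k\times D_h$), the equation $\langle \partial_{(\varphi,\psi)}J,(g,h)\rangle=0$ for all $(g,h)\in E$ is equivalent to the fixed point equation
\begin{align*}
(\varphi,\psi)=\mathcal{A}(\varphi,\psi):=-L^{-1}\bigl(\ell_k+R'(\varphi,\psi)\bigr).
\end{align*}
Here $\ell_k$ denotes the element of $E$ representing the linear functional $\ell$, whose norm is controlled by Lemma \ref{lm2.4}.

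First I would fix a large constant $C_*>0$ and work in the closed ball
\begin{align*}
B_k=\Bigl\{(\varphi,\psi)\in E:\ \|(\varphi,\psi)\|\leq C_*\rho_k,\ \ \rho_k:=\frac{k}{r^m}+\frac{k}{r^n}+k^{\frac12}e^{-\frac{2\pi r}{k}}\frac{k}{r}\Bigr\}.
\end{align*}
Using $\|L^{-1}\|\leq\varrho^{-1}$, Lemma \ref{lm2.4} and the bound $\|R'(\varphi,\psi)\|\leq C(\|(\varphi,\psi)\|^2+\|(\varphi,\psi)\|^3)$ from Lemma \ref{lm2.3}, one gets for $(\varphi,\psi)\in B_k$
\begin{align*}
\|\mathcal{A}(\varphi,\psi)\|\leq \varrho^{-1}\bigl(C\rho_k+C(C_*\rho_k)^2+C(C_*\rho_k)^3\bigr)\leq C_*\rho_k
\end{align*}
once $C_*$ is chosen larger than $2C\varrho^{-1}$ and $k\geq k_0$ is large enough that $\rho_k$ is small (note $\rho_k\to0$ since $r\sim \tfrac{m}{2\pi}k\ln k$). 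Hence $\mathcal{A}$ maps $B_k$ into itself. For the contraction property I would use the estimate $\|R''(\varphi,\psi)\|\leq C(\|(\varphi,\psi)\|+\|(\varphi,\psi)\|^2)$ of Lemma \ref{lm2.3}, which via the mean value inequality gives, for $(\varphi_i,\psi_i)\in B_k$,
\begin{align*}
\|\mathcal{A}(\varphi_1,\psi_1)-\mathcal{A}(\varphi_2,\psi_2)\|\leq \varrho^{-1}\|R'(\varphi_1,\psi_1)-R'(\varphi_2,\psi_2)\|\leq C\varrho^{-1}\rho_k\,\|(\varphi_1,\psi_1)-(\varphi_2,\psi_2)\|,
\end{align*}
so $\mathcal{A}$ is a contraction on $B_k$ for $k\geq k_0$. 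The Banach fixed point theorem then yields a unique $(\varphi,\psi)=(\varphi(r,h),\psi(r,h))\in B_k$ solving the fixed point equation, which is the desired reduced solution; the norm bound $\|(\varphi,\psi)\|\leq C\rho_k$ is built into the definition of $B_k$.

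It remains to establish the $C^1$ dependence of $(\varphi,\psi)$ on the parameters. For this I would apply the implicit function theorem to the map $G(r,h,\varphi,\psi):=(\varphi,\psi)+L^{-1}(\ell_k+R'(\varphi,\psi))$ from $S_k\times D_h\times E$ to $E$: the partial derivative $\partial_{(\varphi,\psi)}G$ at the solution equals $\mathrm{Id}+L^{-1}R''(\varphi,\psi)$, which is invertible because $\|L^{-1}R''(\varphi,\psi)\|\leq C\varrho^{-1}\rho_k<1$ for $k\geq k_0$; the joint $C^1$ smoothness of $G$ in $(r,h)$ follows from the smooth dependence of $U_{r,h},V_{r,h}$ (hence of $\ell_k$, $L$ and $R$) on $(r,h)$. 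I expect the only genuinely delicate points to be those already dispatched in the preceding lemmas — the uniform invertibility of $L$ (Lemma \ref{lm2.2}) and the sharp estimate on $\|\ell_k\|$ (Lemma \ref{lm2.4}); granting those, the argument above is routine. One should, however, be careful that all constants $C$, $\varrho$, $C_*$ are independent of $k$ and of $(r,h)$ in the admissible range, which is what makes the self-map and contraction estimates uniform and allows a single $k_0$ to work.
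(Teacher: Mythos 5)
Your proposal is correct and follows essentially the same route as the paper: represent $\ell$ by an element $\ell_k\in E$, invert $L$ via Lemma \ref{lm2.2}, and solve the fixed point equation $(\varphi,\psi)=-L^{-1}(\ell_k+R'(\varphi,\psi))$ by contraction on a small ball whose radius is dictated by the $\|\ell_k\|$ estimate of Lemma \ref{lm2.4}; the only cosmetic difference is that the paper takes the ball of radius $k^{\sigma}\|\ell_k\|$ rather than $C_*\|\ell_k\|$, and it leaves the $C^1$ dependence (which you correctly handle by the implicit function theorem) implicit.
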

\begin{proof}
It follows from Lemma \ref{lm2.1} that $\ell(\varphi,\psi)$ is a bounded linear functional in $E$. Thus, there is an $\ell_k\in E$ such that
\begin{align*}
\ell(\varphi,\psi)=\langle \ell_k,(\varphi,\psi)\rangle.
\end{align*}
Thus, finding a critical point for $J(\varphi,\psi)$ is equivalent to solving
\begin{align}\label{eqs2.45}
\ell_k+L(\varphi,\psi)+R'(\varphi,\psi)=0.
\end{align}
By Lemma 2.2 , $L$ is invertible. Thus \eqref{eqs2.45} can be rewritten as
\begin{align*}
(\varphi,\psi)=A(\varphi,\psi)=-L^{-1}\ell_k-L^{-1}R'(\varphi,\psi).
\end{align*}
Set
\begin{align*}
D=\Bigl\{(\varphi,\psi):(\varphi,\psi)\in E,\|(\varphi,\psi)\|\leq \frac{k^{1+\sigma}}{r^m}+\frac{k^{1+\sigma}}{r^n}+k^{\frac{1+\sigma}{2}}e^{-\frac{2\pi r}{k}}\frac{k}{r}\Bigr\},
\end{align*}
where $\sigma>0$ is small.

From Lemma \ref{lm2.2} to Lemma \ref{lm2.4} for $k$ large, we have
\begin{align*}
\|A(\varphi,\psi)\|\leq& C\Big(\|\ell_k\|+\||(\varphi,\psi)\|^2\Big)
\leq C\Big(\frac{k}{r^m}+\frac{k}{r^n}+k^{\frac{1}{2}}e^{-\frac{2\pi r}{k}}\frac{k}{r}\Big)+\Big(\frac{k^{1+\sigma}}{r^m}+\frac{k^{1+\sigma}}{r^n}+k^{\frac{1+\sigma}{2}}e^{-\frac{2\pi r}{k}}\frac{k}{r}\Big)^2\cr
\leq&\frac{k^{1+\sigma}}{r^m}+\frac{k^{1+\sigma}}{r^n}+k^{\frac{1+\sigma}{2}}e^{-\frac{2\pi r}{k}}\frac{k}{r}
\end{align*}
and
\begin{align*}
\|A(\varphi_1,\psi_1)-A(\varphi_2,\psi_2)\|=&\|L^{-1}R'(\varphi_1,\psi_1)-L^{-1}R'(\varphi_2,\psi_2)\|\cr
\leq& C(\|(\varphi_1,\psi_1)\|+\|(\varphi_1,\psi_1)\|^2)(\|(\varphi_1,\psi_1)-(\varphi_2,\psi_2)\|)\cr
\leq &\frac{1}{2}(\|(\varphi_1,\psi_1)-(\varphi_2,\psi_2)\|).
\end{align*}
Therefore, A maps $D$ into $D$ and is a contraction map. So, by the contraction mapping theorem, there exists $(\varphi,\psi)\in E$ such that
$(\varphi,\psi)=A(\varphi,\psi)$. Finally, there exists $(\varphi,\psi)$ satisfies
\begin{align*}
\|(\varphi,\psi)\|\leq C\Big(\frac{k}{r^m}+\frac{k}{r^n}+k^{\frac{1}{2}}e^{-\frac{2\pi r}{k}}\frac{k}{r}\Big).
\end{align*}
 \end{proof}
Now we are ready to prove Theorem \ref{th1.3}. Let $(\varphi_k,\psi_k)=(\varphi(r),\psi(r))$ be the map obtained in Proposition 2.5. Define
$$
F(r)=I(U_{r,h}+\varphi_k,V_{r,h}+\psi_k),\,\,\,\,\forall\,r\in D_r.
$$
With the same argument used  in \cite{CNY}, we can easily check that for $k$ sufficiently large, if $r$ is a critical point of $F(r)$, then $(U_{r,h}+\varphi_k,V_{r,h}+\psi_k)$ is a critical point of $I$.

Now we are in a position to prove Theorem 1.3.

{\bf Proof of Theorem 1.3} It follows from Lemma \ref{lm2.1} and Lemma \ref{lm2.3} that
$$
\|L(\varphi_k,\psi_k)\|\leq C\|(\varphi_k,\psi_k)\|,\,\,\,\,\,|R(\varphi_k,\psi_k)|\leq C\|(\varphi_k,\psi_k)\|^3.
$$
So, Proposition \ref{proA.4} gives
\begin{align*}
F(r)=&I\left(U_{r,h},V_{r,h}\right)+\ell\left(\varphi_k,\psi_k\right)+\frac{1}{2}\bigr\langle L\left(\varphi_k,\psi_k\right),\left(\varphi_k,\psi_k\right)\bigr\rangle+R\left(\varphi_k,\psi_k\right)\cr
=&I(U_{r,h},V_{r,h})+O(\|\ell_k\|\|(\varphi_k,\psi_k)\|+\|(\varphi_k,\psi_k)\|^2)\cr
=&I(U_{r,h},V_{r,h})+O_k(\frac{1}{k^{m+1+\sigma}})\cr
=&k\Bigl(A_0+\frac{A_1}{r^m}+\frac{A_2}{r^n}-2C_\beta \frac{k}{r}e^{-2\pi\sqrt{1-h^2}\frac{r}{k}}-C_\beta \frac{k}{r}e^{-2rh}+\frac{C}{r^{m+\sigma}}+kO_k(e^{-2(1+\sigma)rh})\Bigr)\cr
&+O\Bigl(\frac{C(r)}{r^{m+\sigma}}+\frac{D(r)}{r^{n+\sigma}}\Bigr)+\frac{1}{k^{m+2+\sigma}},
\end{align*}
 where $A_0,\,A_1,\,A_2$ are constants in Proposition A.1 and $C(r),\,D(r)$ are functions independent of $h$ and can be absorbed in $O(1)$. We prove the case $m=n$, since the other cases are similar. If $m=n$, then
\begin{align*}
F_1(r,h)=A_0+\frac{A_1}{r^m}-2C_\beta e^{-2\pi\sqrt{1-h^2}\frac{r}{k}}-C_\beta e^{-2rh}.
\end{align*}
 Then we consider the system
\begin{eqnarray}\label{2.24}
\begin{cases}
F_{1r}(r,h)=-A_1\frac{m}{r^{m+1}}+4C_\beta\pi\sqrt{1-h^2}e^{-\frac{2\pi\sqrt{1-h^2}r}{k}}+2C_\beta he^{-2rh}=0,\cr
\,\,\,\,\,\cr
F_{1h}(r,h)=-4C_\beta\pi\frac{hr}{\sqrt{1-h^2}}\frac{e^{-2\pi\sqrt{1-h^2}\frac{r}{k}}}{k}+2C_\beta re^{-2rh}=0.
 \end{cases}
 \end{eqnarray}
From \eqref{2.24}, we can get
\begin{align*}
-A_1\frac{m}{r^{m+1}}+4B_1\pi\frac{e^{-2\pi\sqrt{1-h^2}\frac{r}{k}}}{k}\Bigl(\sqrt{1-h^2}+\frac{h^2}{\sqrt{1-h^2}}\Bigr)=0.
\end{align*}
 Define
 $$
 H(r,h)=e^{-2\pi\sqrt{1-h^2}\frac{r}{k}},\,\,\,\,\,\,\,\,\,G(r,h)=e^{-2rh}.
 $$
 Then \eqref{2.24} implies
 \begin{align*}
H(r,h)=\frac{A_1k\frac{m}{r^{m+1}}}{4B_1\pi (\sqrt{1-h^2}+\frac{h^2}{\sqrt{1-h^2}})},\,\,\,\,\,\,\,\,\,
G(r,h)=\frac{2\pi h}{\sqrt{1-h^2}k}e^{-2\pi\sqrt{1-h^2}\frac{r}{k}}.
 \end{align*}
 We define the space
 \begin{align*}
\bar{S}_k=\Bigl\{(r_k,h_k)\bigr|\Bigl[(\frac{m}{2\pi}-\frac{\beta}{100})k\ln k,(\frac{m}{2\pi}+\frac{\beta}{100})k\ln k\Bigr]\times\Bigl[\frac{\pi(m+2)}{m}-\frac{\alpha}{100}\frac{1}{k},\frac{\pi(m+2)}{m}+\frac{\alpha}{100}\frac{1}{k}\Bigr]\Bigr\},
 \end{align*}
and the mapping $T:\bar{S}_k\rightarrow\R^2$ satisfying $T\left(r_k,h_k\right)=\left(H\left(r_k,h_k\right),G\left(r_k,h_k\right)\right)$,
where $\alpha,\beta$ are constants. The system \eqref{eqs1.1} is equivalent to finding a fixed point of
\begin{align*}
(r,h)=&T^{-1}\Big(\frac{A_1k\frac{m}{r^{m+1}}}{4C_\beta
\pi(\sqrt{1-h^2}+\frac{h^2}{\sqrt{1-h^2}})},\frac{h}{2C_\beta\sqrt{1-h^2}}
\frac{A_1k\frac{m}{r^{m+1}}}{\sqrt{1-h^2}+\frac{h^2}{\sqrt{1-h^2}}}\Big)\cr
=&A(r,h)=:(a_1(r,h),a_2(r,h)).
\end{align*}
It is easy to show that
\begin{align*}
|a_1(r_1,h_1)-a_2(r_2,h_2)|+|a_2(r_1,h_1)-a_2(r_2,h_2)|\leq o_k(1)(|r_1-r_2|+|h_1-h_2|),
\end{align*}
for all $(r_1,h_1),(r_2,h_2)\in S_k$. By using the Contraction Mapping principle, we can prove that there exists a fixed point $(\bar{r}_k,\bar{h}_k)\in \bar{S}_k$.
Similar proof as \cite{DM}, we complete the proof.

\section{segregated vector solutions and the proof of Theorem \ref{th1.2}}
In this section, we consider synchronized vector solutions and prove Theorem \ref{th1.2} by proving Theorem \ref{th1.4}. Let
\begin{align}
\bar{M}_{1j}=\frac{\partial W_{\mu_1,\bar{x}_j}}{\partial r},\,\bar{M}_{2j}=\frac{\partial W_{\mu_1,\bar{x}_j}}{\partial h},\,\bar{N}_{1j}=\frac{\partial W_{\mu_2,\bar{y}_j}}{\partial \rho},\,\bar{N}_{1j}=\frac{\partial W_{\mu_2,\bar{y}_j}}{\partial h},
\end{align}
 \begin{align}
\underline{M}_{1j}=\frac{\partial W_{\mu_1,\bar{x}_j}}{\partial r},\,\underline{M}_{2j}=\frac{\partial W_{\mu_1,\bar{x}_j}}{\partial h},\,\underline{N}_{1j}=\frac{\partial W_{\mu_2, \bar{y}_j}}{\partial \rho},\,\underline{N}_{1j}=\frac{\partial W_{\mu_2,\bar{y}_j}}{\partial h},
\end{align}
where $\bar{x}_j$, $\bar{y}_j$ $(j=1,2,\cdots,k)$\,are defined in Section 2. Denote
\begin{align*}
W_{r,h}=\sum\limits_{j=1}^kW_{\mu_1,\bar{x}_j}+W_{\mu_1,\underline{x}_j},\,\,\,\,\,\,W_{\rho,h}=\sum\limits_{j=1}^kW_{\mu_2,\bar{y}_j}+W_{\mu_2,\underline{y}_j}.
\end{align*}
For simplicity of notations, in the sequel we use $W_{r,h},W_{\rho,h}$ to replace $U,\,V$ respectively. In this section, we assume
\begin{align*}
(r,\rho)\in S_k\times S_k=\Bigl[\,\bigl(\frac{m}{2\pi}-\delta\bigr)k\ln k,\bigl(\frac{m}{2\pi}+\delta\bigr)k\ln k\,\Bigr]\times\Bigl[\,\bigl(\frac{m}{2\pi}-\delta\bigr)k\ln k,\bigl(\frac{m}{2\pi}+\delta\bigr)k\ln k\,\Bigr],
\end{align*}
\begin{align*}
h\in \Bigl[\bigl(\frac{\pi(m+2)}{m}-\xi\bigr)\frac{1}{k},\bigl(\frac{\pi(m+2)}{m}+\xi\bigr)\frac{1}{k}\Bigr]=:H_0.
\end{align*}
Define
\begin{align*}
\bar{E}=\Bigl\{(u,v)\in H\times H, &\int_{\R^N}W_{\mu_1,\bar{x}_j}^2\bar{M}_{lj}u=0,\int_{\R^N}W_{\mu_1,\underline{x}_j}^2\underline{M}_{lj}u=0,\cr
&\,\,\,\,\,\,\,\,\int_{\R^N}W_{\mu_2,\bar{y}_j}^2\bar{N}_{lj}v=0,\int_{\R^N}W_{\mu_2,\bar{y}_j}^2\underline{N}_{lj}v=0,j=1,2\cdots,k,l=1,2\Bigr\},
\end{align*}
\begin{align*}
L^*(\phi,\xi)=&\int_{\R^N}\left[\left(|\nabla \phi|^2+P(|x|)\phi^2-3\mu_1 W_{r,h}^2\phi^2\right)+\left(|\nabla\xi|^2+Q(|x|)\xi^2-3\mu_2 W_{\rho,h}^2\xi^2\right)\right]\cr
&-\beta\int_{\R^3}\left(W_{r,h}^2\xi^2+4W_{r,h}W_{\rho, h}\phi\xi+W_{\rho,h}^2\phi^2\right),
\end{align*}

\begin{align*}
\ell^*(\phi,\xi)=&\int_{\R^3}(P(x)-1)W_{r,h}\phi+\mu_1\int_{\R^3}\Bigl(W_{r,h}^3-\sum\limits_{j=1}^kW_{\mu_1,\bar{x}_j}^3-\sum\limits_{j=1}^kW_{\mu_1, \underline{x}_j}^3\Bigr)\phi\cr
&+\int_{\R^3}(Q(x)-1)W_{\rho,h}\xi+\mu_2\int_{\R^3}\Bigl(W_{\rho,h}^3
-\sum\limits_{j=1}^kW^3_{\mu_2,\bar{y}_j}-\sum\limits_{j=1}^kW^3_{\mu_2,\underline{y}_j}\Bigr)\xi\cr
&-\beta\int_{\R^3}\Bigl(W_{r, h}W_{\rho,h}^2\phi+W_{r,h}^2W_{\rho,h}\xi\Bigr),
\end{align*}

\begin{align*}
R^*(\phi,\xi)=&-\frac{1}{4}\int_{\R^3}\left[\left(W_{r,h}+\phi\right)^4+\left(W_{\rho,h}+\xi\right)^4-W_{r,h}^4-W_{\rho,h}^4-4W_{r, h}^3\phi-4W_{\rho, h}^3\xi\right.\cr
&\left.-6\mu_1 W_{r,h}^2\phi^2-6\mu_2 W_{\rho,h}^2\xi^2\right]\cr
&-\frac{\beta}{2}\int_{\R^3}\left[\left(W_{r,h}+\phi\right)^2\left(W_{\rho, h}+\xi\right)^2-W_{r,h}^2W_{\rho,h}^2-2\left(W_{r,h}W_{\rho,h}^2\phi-W_{r,h}^2W_{\rho,h}\xi\right)\right.\cr
&\left.-2\left(W_{r,h}^2\xi^2+4W_{r,h}^2\xi^2+4W_{r,h}W_{\rho,h}\phi\xi+W_{\rho,h}^2\phi^2\right)\right].
\end{align*}
\begin{lemma}\label{lm3.1}
There is a constant $C>0$, independent of $k$ such that for any $(r,\rho)\in S_k\times S_k$,  there holds
\begin{align*}
\|L^*(u,v)\|\leq C\|(u,v)\|,\,\,\,(u,v)\in E.
\end{align*}
\end{lemma}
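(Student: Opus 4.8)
The plan is to exhibit $L^{*}$ as the Riesz representation of a bounded bilinear form on $\bar E$ and to bound each of its pieces by a multiple of $\|(u,v)\|=\|u\|_P+\|v\|_Q$, with the constant independent of $k$. By polarization of the quadratic form written in the statement, for $(u,v),(\phi,\xi)\in\bar E$ we have
\begin{align*}
\langle L^{*}(u,v),(\phi,\xi)\rangle
=&\int_{\R^{3}}\bigl(\nabla u\nabla\phi+P(|x|)u\phi-3\mu_{1}W_{r,h}^{2}u\phi\bigr)
+\int_{\R^{3}}\bigl(\nabla v\nabla\xi+Q(|x|)v\xi-3\mu_{2}W_{\rho,h}^{2}v\xi\bigr)\cr
&-\beta\int_{\R^{3}}\bigl(W_{r,h}^{2}v\xi+W_{\rho,h}^{2}u\phi+2W_{r,h}W_{\rho,h}u\xi+2W_{r,h}W_{\rho,h}v\phi\bigr).
\end{align*}
First I would dispatch the ``linear'' part: by Cauchy--Schwarz, $\bigl|\int\nabla u\nabla\phi+P(|x|)u\phi\bigr|\le\|u\|_P\|\phi\|_P$, and likewise for the $v,\xi$ term, so these contribute at most $\|(u,v)\|\,\|(\phi,\xi)\|$.

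The substantive step is to control the coupling terms involving $W_{r,h}^{2}$, $W_{\rho,h}^{2}$ and $W_{r,h}W_{\rho,h}$. Here I would use that $W_{r,h}=\sum_{j}(W_{\mu_1,\bar x_j}+W_{\mu_1,\underline x_j})$ is a sum of $2k$ translated copies of the fixed exponentially decaying profile $W_{\mu_1}$, and similarly $W_{\rho,h}$; hence $W_{r,h},W_{\rho,h}\in L^{\infty}(\R^{3})$ with a bound uniform in $k$ (the overlaps between bumps are controlled because the centers are $\gtrsim (m/2)\ln k$ apart, so the sum of the tails is bounded by a geometric series independent of $k$). Then, for instance,
\[
3\mu_1\Bigl|\int_{\R^{3}}W_{r,h}^{2}u\phi\Bigr|\le 3\mu_1\|W_{r,h}\|_{\infty}^{2}\,\|u\|_{L^{2}}\|\phi\|_{L^{2}}\le C\|u\|_P\|\phi\|_P,
\]
and the mixed term is handled the same way via $\|W_{r,h}W_{\rho,h}\|_{\infty}\le\|W_{r,h}\|_{\infty}\|W_{\rho,h}\|_{\infty}$ together with $|\!\int W_{r,h}W_{\rho,h}u\xi|\le\|W_{r,h}W_{\rho,h}\|_{\infty}\|u\|_{L^2}\|\xi\|_{L^2}$. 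Summing the six pieces gives $|\langle L^{*}(u,v),(\phi,\xi)\rangle|\le C\|(u,v)\|\,\|(\phi,\xi)\|$ for all $(\phi,\xi)\in\bar E$, and taking the supremum over $\|(\phi,\xi)\|\le1$ yields $\|L^{*}(u,v)\|\le C\|(u,v)\|$. (Alternatively, one may invoke the Sobolev embedding $H^{1}(\R^{3})\hookrightarrow L^{4}(\R^{3})$ and H\"older with exponents $4,4,4,4$ to estimate $\int W_{r,h}^{2}u\phi$, which avoids even the $L^\infty$-bound; either route is routine.)

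The only point requiring a little care — and the one I would single out as the ``main obstacle'', though it is mild — is making the bound on $\|W_{r,h}\|_{\infty}$ and $\|W_{\rho,h}\|_{\infty}$ genuinely \emph{independent of $k$}. This is exactly the kind of estimate packaged in the appendix lemmas (of the type \ref{lmA.1}--\ref{lmA.2} cited earlier for the synchronized case): at any point $y$, at most one bump is $O(1)$ and the remaining $2k-1$ bumps decay like $e^{-c\,\mathrm{dist}}$ with mutual separations bounded below by a multiple of $\ln k$, so their sum is $O(1)+O(k e^{-c\ln k})=O(1)$ uniformly in $k$ as soon as $k$ is large. Once this uniform $L^{\infty}$ (or $L^{3}$, $L^{4}$) bound on $W_{r,h},W_{\rho,h}$ is in hand, the lemma follows immediately from the estimates above; note in particular that the parameter $h$ enters only through the bump locations and does not affect any of these bounds since $h\in H_0$ forces $\sqrt{1-h^{2}}=1+O(k^{-2})$. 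This completes the proof.
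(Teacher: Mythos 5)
Your proposal is correct and matches the paper's (largely implicit) argument: the paper simply observes that the quadratic form defining $L^*$ is a bounded bilinear functional and states the lemma without further detail, which is exactly what you establish via Cauchy--Schwarz for the gradient/potential terms and the uniform-in-$k$ $L^\infty$ (or $L^4$--H\"older) bound on $W_{r,h}$ and $W_{\rho,h}$ coming from the $\gtrsim\ln k$ separation of the bump centers. Your write-up supplies the details the paper omits, and the point you flag as needing care --- the $k$-independence of $\|W_{r,h}\|_\infty$ --- is precisely the content of the appendix decay estimates (Lemma \ref{lmA.1}), so nothing is missing.
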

\begin{lemma}\label{lm3.2}
There exists $\rho^*>0$ independent of $k$, such that for any $(r,\rho)\in S_k\times S_k$ if $\beta<\beta^*$, then
\begin{align*}
\|L^*(u,v)\|\geq \rho^*\|(u,v)\|,\,\,\,(u,v)\in E.
\end{align*}
\end{lemma}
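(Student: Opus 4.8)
The plan is to argue by contradiction, following the scheme of Lemma~\ref{lm2.2}; the only genuinely new feature is that in the segregated regime the two components blow up along two \emph{interleaved} families of points rather than at a common set, so the nonlocal coupling in $L^*$ must be shown to decouple in the limit. Suppose the conclusion fails. Then there are $k\to\infty$, parameters $(r_k,\rho_k)\in S_k\times S_k$, $h_k\in H_0$, and $(u_k,v_k)\in\bar E$ with $\langle L^*(u_k,v_k),(\phi,\xi)\rangle=o_k(1)\,\|(u_k,v_k)\|\,\|(\phi,\xi)\|$ for every $(\phi,\xi)\in\bar E$, and after rescaling we may take $\|(u_k,v_k)\|^2=k$. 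Working in the symmetric class (even in $y_2$, invariant under the rotation by $2\pi/k$, as in Section~2), the squared norm is shared equally among the $k$ congruent sectors, so dividing by $k$ reduces the testing identity to an integral over one fundamental sector — which we arrange to contain exactly one $u$-center $\bar{x}_1$ and one $v$-center $\bar{y}_1$ — with remainder $O_k(\sqrt{k})\|(\phi,\xi)\|$, exactly as in \eqref{eqs2.5}; on that sector the local energy equals $1$.

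Next I would blow up. Set $\bar{u}_k(\cdot)=u_k(\cdot-\bar{x}_1)$ and $\bar{v}_k(\cdot)=v_k(\cdot-\bar{y}_1)$. Since two consecutive centers of the same type are at distance $2r\sqrt{1-h^2}\sin\frac{\pi}{k}\ge\frac{m}{4}\ln k$, for each fixed $R$ the balls $B_R(\bar{x}_1)$ and $B_R(\bar{y}_1)$ eventually lie inside their respective sectors, which gives uniform $H^1(B_R)$ bounds; pass to weak limits $\bar{u}_k\rightharpoonup u$ and $\bar{v}_k\rightharpoonup v$ in $H^1_{\mathrm{loc}}(\R^3)$, strong in $L^2_{\mathrm{loc}}(\R^3)$. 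Expanding $\partial_r W_{\mu_1,\bar{x}_1}$ and $\partial_h W_{\mu_1,\bar{x}_1}$ in Cartesian derivatives and letting $k\to\infty$ in the four orthogonality conditions defining $\bar E$ — precisely the computation \eqref{eqs2.7}--\eqref{eqs2.07} — yields $\int_{\R^3}W_{\mu_1}^2(\partial_{y_1}W_{\mu_1})u=\int_{\R^3}W_{\mu_1}^2(\partial_{y_3}W_{\mu_1})u=0$, and the analogous pair for $v$ with $W_{\mu_2}$.

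The decisive new point is the fate of the coupling terms $\int W_{r,h}^2\xi^2$, $\int W_{r,h}W_{\rho,h}\phi\xi$, $\int W_{\rho,h}^2\phi^2$ when we test \eqref{eqs2.5} against functions supported near $\bar{x}_1$ (resp. $\bar{y}_1$): because any $u$-center and any $v$-center have angular separation at least $\pi/k$ in the $(y_1,y_2)$-plane and $\sqrt{1-h^2}$ stays bounded below, $\mathrm{dist}(\bar{x}_i,\bar{y}_j)\ge c\,k^{-1}\min(r_k,\rho_k)\to\infty$ uniformly, so $W_{\mu_1,\bar{x}_i}W_{\mu_2,\bar{y}_j}\le Ce^{-|y-\bar{x}_i|/2}e^{-|y-\bar{y}_j|/2}\le Ce^{-\mathrm{dist}(\bar{x}_i,\bar{y}_j)/2}$, and summing the $O(k)$ interaction pairs costs only a polynomial factor; hence all coupling contributions vanish in the limit. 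Using $P,Q\to1$ at infinity one then gets that $u$ solves $-\Delta u+u-3\mu_1 W_{\mu_1}^2u=0$ and $v$ solves $-\Delta v+v-3\mu_2 W_{\mu_2}^2v=0$ in $\R^3$. The non-degeneracy of $W_{\mu_1}$ and $W_{\mu_2}$ together with evenness in $y_2$ force $u\in\mathrm{Span}\{\partial_{y_1}W_{\mu_1},\partial_{y_3}W_{\mu_1}\}$ and $v\in\mathrm{Span}\{\partial_{y_1}W_{\mu_2},\partial_{y_3}W_{\mu_2}\}$, and the two orthogonality relations above then give $u\equiv0$ and $v\equiv0$.

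Finally I would close the contradiction as in Lemma~\ref{lm2.2}: from $u\equiv v\equiv0$ we obtain $\int_{B_R(\bar{x}_1)}u_k^2+\int_{B_R(\bar{y}_1)}v_k^2=o_k(1)$ for each fixed $R$; combining the pointwise bounds $W_{r,h}\le Ce^{-|y-\bar{x}_1|/2}$ and $W_{\rho,h}\le Ce^{-|y-\bar{y}_1|/2}$ on the sectors (Lemmas~\ref{lmA.1}--\ref{lmA.2}) with the exponential smallness of the cross terms shows that $\langle L^*(u_k,v_k),(u_k,v_k)\rangle=1+O(e^{-R/2})+o_k(1)$, where the hypothesis $\beta<\beta^*$ is what prevents the $\beta$-coupling part from swallowing the leading $1$; letting $R\to\infty$ and then $k\to\infty$ contradicts $\langle L^*(u_k,v_k),(u_k,v_k)\rangle=o_k(1)$. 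The main obstacle, and the one point that is not a verbatim copy of Lemma~\ref{lm2.2}, is exactly this decoupling: one must control $\mathrm{dist}(\bar{x}_i,\bar{y}_j)$ over all pairs and over all $(r,\rho,h)\in S_k\times S_k\times H_0$, and check that the exponential gain from the separation of the two concentration circles beats the polynomial loss from the $O(k)$ interactions, so that the nonlocal coupling contributes nothing either to the limit equations or to the final energy identity.
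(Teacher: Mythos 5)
Your overall strategy coincides with the paper's: argue by contradiction with $\|(u_k,v_k)\|^2=k$, reduce to one sector by symmetry, blow up separately at $\bar{x}_1$ and $\bar{y}_1$, pass the orthogonality constraints to the limit, obtain \emph{decoupled} limit equations, kill the limits by non-degeneracy, and close with the quadratic form evaluated at $(u_k,v_k)$. All of that matches Lemma \ref{lm3.2} as proved in the paper, including the observation that the two limit equations decouple because $\mathrm{dist}(\bar{x}_i,\bar{y}_j)\to\infty$.

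There is, however, one point where your write-up is internally inconsistent and where the stated mechanism is wrong, and it is precisely the point you yourself single out as ``the decisive new point.'' The separation-of-centers argument (exponential gain beating the polynomially many pairs) controls only the \emph{off-diagonal} coupling $\int W_{r,h}W_{\rho,h}\phi\xi$, because there the product $W_{\mu_1,\bar{x}_i}W_{\mu_2,\bar{y}_j}$ is pointwise exponentially small everywhere. It does \emph{not} make the diagonal terms $\int_{\R^3}W_{\rho,h}^2u_k^2$ and $\int_{\R^3}W_{r,h}^2v_k^2$ vanish: $W_{\rho,h}$ is of order one near the $\bar{y}_j$'s, and nothing in the hypotheses prevents $u_k$ from carrying a fixed fraction of its $L^2$ mass there (the local vanishing $\int_{B_R(\bar{x}_1)}u_k^2=o(1)$ says nothing about $u_k$ near $\bar{y}_1$). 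Consequently your claim that ``all coupling contributions vanish in the limit,'' and the ensuing identity $\langle L^*(u_k,v_k),(u_k,v_k)\rangle=1+O(e^{-R/2})+o_k(1)$, cannot both be correct and consistent with needing $\beta<\beta^*$: if the coupling really vanished, no restriction on $\beta$ would be required. The paper's actual argument is cruder and is the reason assumption $(\beta_2)$ exists: one only has $0\le\int_{\R^3}\bigl(W_{\rho,h}^2u_k^2+W_{r,h}^2v_k^2\bigr)\le C\int_{\R^3}(u_k^2+v_k^2)\le Ck$ with $C$ independent of $k$, so the quadratic form is bounded below by $k-C\beta k+o(k)$, and the contradiction requires $\beta<\beta^*:=1/C$. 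You do invoke $\beta<\beta^*$ at the right moment, so the proof can be repaired by replacing the ``vanishing'' claim for the diagonal terms with this uniform bound; as written, though, the justification for the key step is not the correct one.
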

\begin{proof}
The argument is similar to the proof of Lemma 2.2. Arguing by contradiction. We suppose that there are $k\rightarrow+\infty$, $(r,\rho)\in S_k\times S_k$ and $(u_k,v_k)\in E$ with $\|(u_k,v_k)\|^2=k$, and
\begin{equation}\label{eqs3.3.1}
\bigl\langle L(u_k,v_k),(\phi,\xi)\bigr\rangle=o(1)\|(u_k,v_k)\|\|(\phi,\xi)\|,\,\,\,\forall\,(\phi,\xi)\in \bar{E}.
\end{equation}
For $j=1,2,\cdots,k$, let
\begin{align*}
\Omega_j^+=\Bigl\{z_3>0,\,\,\,\,\,\,\,\,\Bigl\langle\frac{z'}{|z'|},\frac{x'_j}{|x'_j|}\Bigr\rangle \geq \cos\frac{\pi}{k}\Bigr\},
\end{align*}
\begin{align*}
\Omega_j^{+*}=\Bigl\{z_3>0,\,\,\,\,\,\,\,\,\,\Bigl\langle\frac{z'}{|z'|},\frac{y'_j}{|y'_j|}\Bigr\rangle\geq\cos\frac{\pi}{k}\Bigr\}.
\end{align*}
We will use $r,\rho$ to replace $r_k,\rho_k$ respectively. By symmetry, we see from \eqref{eqs3.3.1},
\begin{align}\label{eqs03.1}
&\int_{\Omega_1}\left(\nabla u_k\nabla\phi+P\left(|x|\right)u_k\phi-3\mu_1 W_{r,h}u_k\phi\right)+\int_{\Omega_1}\left(\nabla v_k\nabla\xi+Q\left(|x|\right)v_k\xi-3\mu_2 W_{\rho,h}v_k\xi\right)\cr
&-\beta\int_{\Omega_1}\left(W_{\mu, h}^2v_k\xi+W_{\rho,h}^2u_k\phi+2 W_{r,h}W_{\rho,h}u_k\xi+2W_{r,h}W_{\rho, h}v_k\phi\right)\cr
=&\frac{1}{k}\bigl\langle L\left(u_k,v_k\right),\left(\phi,\xi\right)\bigr\rangle=o\left(1\right)\left(\frac{1}{\sqrt{k}}\|\left(\phi,\xi\right)\|\right),
\end{align}
and
\begin{align}\label{eqs03.2}
\int_{\Omega_1}\left(|\nabla u_k|^2+P(|x|)u_k^2+|\nabla v_k|^2+Q(|x|)v_k^2\right)=1.
\end{align}
Obviously, estimates \eqref{eqs03.1} and \eqref{eqs03.2} are also true on $\Omega_1^{*}$.

Let
$$
\bar{u}_k=u_k(x-\bar{x}^1),\,\,\,\,\,\,\bar{v}_k=v_k(x-\bar{y}^1).
$$
Now we consider $\bar{u}_k$ in details. The analysis on $\bar{v}_k$ is similar for $v_k$ and $u_k$ are also even with respect to the axis $\bar{y}^1$.

We may assume that there exists $\bar{u}\in H^1(\R^3)$, such that as $k\rightarrow+\infty$,
\begin{align*}
\bar{u}_k\rightarrow\bar{u},\,\,\hbox{weakly \,in}\,\, H_{loc}^1(\R^3),\,\,\,\,\,\,\,\bar{u}_k\rightarrow\bar{u},\,\,\hbox{strongly\, in}\, L_{loc}^2(\R^3).
\end{align*}
Let $\bar{\phi}\in C_0^\infty(B_R(0))$ and be even in $x_h,\,h=2,3$. Define $\bar{\phi}_k=:\bar{\phi}(x-\bar{x}^1)\in C_0^\infty(B_R(\bar{x}^1))$. Then choosing $(\phi,\xi)=(\bar{\phi}_k,0)$
in \eqref{eqs03.1} and proceeding as we did in Lemma \ref{lm2.2}, we can see that $\bar{u}$ satisfies
\begin{align*}
-\Delta \bar{u}+\bar{u}-3\mu_1W_{\mu_1}^2\bar{u}=0,\,\,\,\hbox{in}\,\,\R^3.
\end{align*}
Also, by the non degeneracy of $U_{\mu_1}$, we find $\bar{u}=0$.

Using the same argument on $\bar{\Omega}_1^+$, we can prove that as $k\rightarrow+\infty$,
\begin{align*}
 \bar{v}_k\rightarrow0,\,\,\,\,\hbox{weakly in}\, H_{loc}^1(\R^3),\,\,\,\, \bar{v}_k\rightarrow0,\,\,\,\,\hbox{strongly in}\, L_{loc}^2(\R^3).
\end{align*}
As a result, it holds
\begin{align*}
\int_{B_R(\bar{x}_1)}u_k^2=o(1),\,\,\,\int_{B_R(\bar{y}_1)}v_k^2=o(1),\,\,\,\,\,\forall\,\,R>0.
\end{align*}
On the other hand, using Lemmas \ref{lmA.1}-\ref{lmA.2}, we obtain
\begin{align*}
W_{r,h}\leq Ce^{-\frac{|x-\bar{x}_1|}{2}},\,\,\,\,x\in \Omega^+;\,\,\,\,\,\, W_{\rho,h}\leq Ce^{-\frac{|x-\bar{y}_1|}{2}},\,\,x\in \tilde{\Omega}^+_1.
\end{align*}
Thus, from \eqref{eqs03.1}, we see
\begin{align}\label{eqs03.3}
o(1)k=&\int_{\R^3}\left(|\nabla u_k|^2+P(|x|)u_k^2-3\mu_1 W_{r,h}^2u_k^2\right)+\int_{\R^3}\left(|\nabla v_k|^2+Q(|x|)v_k^2-3\mu_2W_{\rho,h}^2v_k^2\right)\cr
&-\beta\int_{\R^3}\left(W_{\rho,h}^2u_k^2+4W_{r,h}W_{\rho,h}u_kv_k+W_{r,h}^2v_k^2\right)\cr
=&\int_{\R^3}\left(|\nabla u_k|^2+P(|x|)u_k^2+|\nabla v_k|^2+Q(x)v_k^2\right)-\beta\int_{\R^3}\left(W_{\rho,h}^2u_k^2+W_{r,h}^2v_k^2\right)\cr
&-3\int_{\Omega_1}\mu_1 W_{r,h}^2u_k^2-3\mu_2\int_{\tilde{\Omega}_1}W_{\rho,h}^2v_k^2-4\beta\int_{\Omega_1}W_{r,h}W_{\rho,h}u_kv_k\cr
=&k-\beta\int_{\R^3}\left(U^2_{r,h}v_k^2+V_{\rho,h}^2u_k^2\right)+ko(e^{-\frac{\pi}{k}r}+o(1)+o(e^{-R}))\cr
\geq&k-C\beta k+ko(e^{-\frac{\pi r}{k}}+o(1)+o(e^{-R})).
\end{align}
Note that
\begin{align*}
0\leq\int_{\R^3}W_{r,h}^2v_k^2+W_{\rho,h}u_k^2\leq C\int_{\R^3}(u_k^2+v_k^2)\leq Ck,
\end{align*}
where $C$ is independent of $k$. If we choose $\beta< \beta^*=\frac{1}{C}$, then \eqref{eqs03.3} is impossible for large $R$ and $k$. Consequently, we complete the proof.
\end{proof}

Now we apply the above reduction process to the functional $J(\bar{\phi},\bar{\xi})$.

\begin{proposition}
There is an integer $k_0>0$, such that for each $k\geq k_0$, there is a $C^1$ map from $S_k\times S_k$ to $H_P\times H_Q$:$(\bar{\phi},\bar{\xi})=(\bar{\phi}(r,\rho),\bar{\xi}(r,\rho))$, $r=|x^1|,\rho=|y^1|$ satisfying $(\bar{\phi},\bar{\xi})\in E$ and
\begin{align*}
\Bigl\langle\frac{\partial J(\bar{\phi},\bar{\xi})}{\partial(\bar{\phi},\bar{\xi})},(h,g)\Bigr\rangle=0,\,\,\,\forall\,\,(h,g)\in \bar{E}.
\end{align*}
Moreover, there holds
\begin{align*}
\|(\bar{\phi},\bar{\xi})\|\leq Ck\Big(\frac{1}{r^m}+\frac{1}{\rho^m}+k^{\frac{1}{2}}\frac{e^{-\frac{2\pi r}{k}}}{r}+k^{\frac{1}{2}}\frac{e^{-\frac{2\pi \rho}{k}}}{\rho}+|\beta|k^{\frac{1}{2}}
\frac{e^{-\sqrt{(1-h^2)((\rho-r\cos\frac{\pi}{k})^2
+r^2(\frac{\pi}{k})^2}}}{r}\Big).
\end{align*}
\end{proposition}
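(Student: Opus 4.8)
The plan is to carry out the finite dimensional reduction exactly as in Section 2, now with $J(\bar\phi,\bar\xi)=I(W_{r,h}+\bar\phi,W_{\rho,h}+\bar\xi)$ and the expansion
\[
J(\bar\phi,\bar\xi)=J(0,0)+\ell^*(\bar\phi,\bar\xi)+\tfrac12 L^*(\bar\phi,\bar\xi)+R^*(\bar\phi,\bar\xi),\qquad(\bar\phi,\bar\xi)\in\bar E.
\]
By Lemma \ref{lm3.1}, $\ell^*$ is a bounded linear functional on $\bar E$, so $\ell^*(\phi,\xi)=\langle\ell^*_k,(\phi,\xi)\rangle$ for some $\ell^*_k\in\bar E$, and finding a critical point of $J$ on $\bar E$ amounts to solving $\ell^*_k+L^*(\bar\phi,\bar\xi)+(R^*)'(\bar\phi,\bar\xi)=0$ in $\bar E$. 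Since $\beta<\beta^*$, Lemma \ref{lm3.2} yields that $L^*$ is invertible on $\bar E$ with $\|(L^*)^{-1}\|\le 1/\rho^*$ independent of $k$, so the equation is equivalent to the fixed point problem $(\bar\phi,\bar\xi)=A(\bar\phi,\bar\xi):=-(L^*)^{-1}\ell^*_k-(L^*)^{-1}(R^*)'(\bar\phi,\bar\xi)$ on $\bar E$.

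Next I would prove the two quantitative ingredients, the analogues of Lemmas \ref{lm2.4} and \ref{lm2.3}. For $\|\ell^*_k\|$: reducing by symmetry to $\Omega_1^+$ and $\Omega_1^{+*}$ and using the decay \eqref{wy1} together with Lemmas \ref{lmA.1}--\ref{lmA.2}, the potential terms $\int(P(x)-1)W_{r,h}\phi$ and $\int(Q(x)-1)W_{\rho,h}\xi$ contribute $O(\tfrac{k}{r^m}\|\phi\|)$ and $O(\tfrac{k}{\rho^m}\|\xi\|)$ (recall $m=n$ here); the self-interaction remainders $W_{r,h}^3-\sum_j(W_{\mu_1,\bar x_j}^3+W_{\mu_1,\underline x_j}^3)$ and its $\mu_2$-counterpart contribute, just as in \eqref{eqs2.20}--\eqref{eqs2.21}, the terms $O(k^{1/2}\tfrac{k}{r}e^{-2\pi r/k}\|\phi\|)$ and $O(k^{1/2}\tfrac{k}{\rho}e^{-2\pi\rho/k}\|\xi\|)$; and the coupling terms $-\beta\int(W_{r,h}W_{\rho,h}^2\phi+W_{r,h}^2W_{\rho,h}\xi)$ are controlled, after summing the $O(k)$ pairwise interactions between the two angularly staggered circles, by $O\big(|\beta|k^{1/2}\tfrac1r e^{-d_k}\big)$ with
\[
d_k=\min_{i,j}\{|\bar x_i-\bar y_j|,|\bar x_i-\underline y_j|,|\underline x_i-\bar y_j|,|\underline x_i-\underline y_j|\}=\sqrt{(1-h^2)\big((\rho-r\cos\tfrac\pi k)^2+r^2(\tfrac\pi k)^2\big)}+o(1).
\]
Adding these up gives the asserted bound for $\|\ell^*_k\|$. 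For $R^*$: expanding $(W_{r,h}+\phi)^4$, $(W_{\rho,h}+\xi)^4$ and $(W_{r,h}+\phi)^2(W_{\rho,h}+\xi)^2$ and estimating the resulting polynomials in $\phi,\xi$ by the Sobolev embedding $H^1(\R^3)\hookrightarrow L^4(\R^3)$, exactly as in Lemma \ref{lm2.3}, gives $\|(R^*)^{(i)}(\phi,\xi)\|\le C\|(\phi,\xi)\|^{3-i}$, $i=0,1,2$, with $C$ independent of $k$.

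With these in hand, write $\mathcal R_k:=k\big(\tfrac{1}{r^m}+\tfrac{1}{\rho^m}+k^{1/2}\tfrac{e^{-2\pi r/k}}{r}+k^{1/2}\tfrac{e^{-2\pi\rho/k}}{\rho}+|\beta|k^{1/2}\tfrac1r e^{-d_k}\big)$ and, for small $\sigma>0$, set $D=\{(\phi,\xi)\in\bar E:\|(\phi,\xi)\|\le k^{\sigma}\mathcal R_k\}$. Since $r,\rho\sim\tfrac{m}{2\pi}k\ln k$ on $S_k\times S_k\times H_0$, the radius $k^{\sigma}\mathcal R_k$ is of order $k^{1-m+\sigma}(\ln k)^{-m}$ and hence tends to $0$ (take $0<\sigma<m-1$), so on $D$ one has $\|A(\phi,\xi)\|\le\|(L^*)^{-1}\|(\|\ell^*_k\|+C\|(\phi,\xi)\|^2)\le\tfrac1{\rho^*}(C\mathcal R_k+Ck^{2\sigma}\mathcal R_k^2)\le k^{\sigma}\mathcal R_k$ for $k$ large, i.e. $A(D)\subset D$, and $\|A(\phi_1,\xi_1)-A(\phi_2,\xi_2)\|\le C(\|(\phi_1,\xi_1)\|+\|(\phi_2,\xi_2)\|)\|(\phi_1,\xi_1)-(\phi_2,\xi_2)\|\le\tfrac12\|(\phi_1,\xi_1)-(\phi_2,\xi_2)\|$, so $A$ is a contraction on $D$. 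The Banach fixed point theorem gives a unique $(\bar\phi,\bar\xi)\in D$; substituting it back into $A$ and absorbing the lower order quadratic term yields $\|(\bar\phi,\bar\xi)\|\le C\mathcal R_k$, which is the estimate in the statement. The $C^1$ dependence of $(\bar\phi(r,\rho),\bar\xi(r,\rho))$ on $(r,\rho)$ follows from the implicit function theorem applied to $G(r,\rho,\phi,\xi)=\ell^*_k+L^*(\phi,\xi)+(R^*)'(\phi,\xi)$, whose derivative in $(\phi,\xi)$, namely $L^*+(R^*)''(\phi,\xi)$, is invertible on $D$ by Lemma \ref{lm3.2} together with $\|(R^*)''(\phi,\xi)\|\le C\|(\phi,\xi)\|\to0$.

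The step I expect to be the main obstacle is the coupling contribution to $\|\ell^*_k\|$ (and similarly to the nonlinear remainder estimates): unlike the synchronized case, where the relation $U=\tfrac{\alpha}{\gamma}V$ reduced every cross term to a power of a single profile on one circle, here $W_{r,h}$ and $W_{\rho,h}$ are concentrated near two distinct circles of radii $r$ and $\rho$ whose points are angularly offset by $\pi/k$, so one must carefully identify the minimal pairwise distance $d_k$, verify the stated formula for it up to lower order, and then sum the exponentially small interactions $e^{-|\bar x_i-\bar y_j|}$ and their analogues over the $O(k)$ pairs while keeping track of the dependence on the new parameter $h$. On $H_0$ the factor $\sqrt{1-h^2}$ stays bounded away from $0$ and $1$, so this amounts to careful bookkeeping rather than a genuinely new difficulty, but it is where most of the work lies.
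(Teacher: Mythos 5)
Your proposal is correct and follows essentially the same route as the paper: represent $\ell^*$ by an element $\ell^*_k\in\bar E$, invert $L^*$ via Lemma \ref{lm3.2}, run the contraction mapping argument of Proposition 2.5, and observe that the only genuinely new estimate is the coupling term $\int_{\R^3}(W_{r,h}W_{\rho,h}^2\bar\phi+W_{r,h}^2W_{\rho,h}\bar\xi)$, which both you and the paper bound by the nearest-neighbour interaction $e^{-|\bar x_1-\bar y_1|}$ with $|\bar x_1-\bar y_1|=\sqrt{(1-h^2)((\rho-r\cos\frac{\pi}{k})^2+r^2\sin^2\frac{\pi}{k})}$ up to lower order. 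Your write-up is in fact more explicit than the paper's (contraction ball, choice of $\sigma$, $C^1$ dependence via the implicit function theorem), but the underlying argument is the same.
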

\begin{proof}
We see that $\bar{\ell}(\bar{\phi},\bar{\xi})$ is a bounded linear functional in $E$. Thus, there is $\bar{\ell}_k\in E$ such that $$\bar{\ell}(\bar{\phi},\bar{\xi})=\bigl\langle\bar{\ell}_k,(\bar{\phi},\bar{\xi})\bigr\rangle.$$
Hence, we only need to use the argument as the proof of Proposition 2.5 and the following estimate on $\|\bar{\ell}_k\|$
\begin{align}\label{eqs33}
\|\bar{\ell}_k\|\leq Ck\Big(\frac{1}{r^m}+\frac{1}{\rho^m}+k^{\frac{1}{2}}\frac{e^{-\frac{2\pi r}{k}}}{r}+k^{\frac{1}{2}}\frac{e^{-\frac{2\pi \rho}{k}}}{\rho}+|\beta|k^{\frac{1}{2}}
\frac{e^{-\sqrt{(1-h^2)((\rho-r\cos\frac{\pi}{k})^2
+r^2(\frac{\pi}{k})^2)}}}{r}\Big),
\end{align}
where $C$ is independent of $k$ and $\beta$.

Now we prove \eqref{eqs33}. Indeed, since in Lemma 2.6, we have a similar estimate on the first four terms of ${\bar{\ell}(\bar{\phi},\bar{\xi})}$, we only need to estimate $\ds\int_{\R^3}\left(W_{r,h}W_{\rho,h}^2\bar{\phi}+W_{r,h}^2W_{\rho,h}\bar{\xi}\right).$

By symmetry, we see
\begin{align*}
&\int_{\R^3}W_{r,h}W_{\rho,h}^2\bar{\phi}=k\int_{\Omega_1}W_{r,h}W_{\rho,h}\bar{\phi}\cr
=&k\int_{\Omega_1^+}\Bigl(W_{\mu_1,\bar{x}_1}W^2_{\mu_2,\bar{y}_1}+W_{\mu_1,\bar{x}_1}\sum\limits_{j=2}^{k-1}W^2_{\mu_2,\bar{y}_j}
+W_{\mu_1,\bar{x}_1}W_{\mu_2,\bar{y}_k}^2
+W^2_{\mu_2,\bar{y}_1}\sum\limits_{j=2}^{k-1}W_{\mu_1,\bar{x}_1}\cr
&+\sum\limits_{j=2}^kW_{\mu_1,\bar{x}_j}\sum\limits_{j=2}^kW_{\mu_2,\bar{y}_j}\Bigr)\bar{\phi}\cr
&+k\int_{\Omega_1^-}\Bigl(W_{\mu_1,\underline{x}_1}W^2_{\mu_2,\underline{y}_1}+W_{\mu_1,\underline{x}_1}\sum\limits_{j=2}^{k-1}W^2_{\mu_2,\underline{y}_j}
+W_{\mu_1,\underline{x}_1}W_{\mu_2,\underline{y}_k}^2+W^2_{\mu_2,\underline{y}_1}\sum\limits_{j=2}^{k-1}W_{\mu_1,\underline{x}_1}\cr
&+\sum\limits_{j=2}^kW_{\mu_1,\underline{x}_j}\sum\limits_{j=2}^kW_{\mu_2,\underline{y}_j}\Bigr)\bar{\phi}\cr
\leq &C\frac{e^{-|\bar{x}_1-\bar{y}_1|}}{|\bar{x}_1-\bar{y}_1|}(\int_{\Omega}|\bar{\phi}|^2dx)^{\frac{1}{2}}
\leq Ck^{\frac{1}{2}}\frac{k}{r}e^{-\sqrt{(1-h^2)((\rho-r \cos\frac{\pi}{k})^2+r^2\sin^2\frac{\pi}{k})}}\|\bar{\phi}\|^2.
\end{align*}
Hence, we complete the proof.
\end{proof}

Finally, we will prove Theorem 1.4.

{\bf Proof of Theorem 1.4 } Let $ (\varphi_{r, \rho},\psi_{r, \rho})$ be the mapping obtained in Proposition 3.3. Define
$$
F(r, \rho)= I(W_{r,h}+\bar{\varphi}_k,W_{\rho,h}+\bar{\psi}_k), \quad \forall\, (r,\rho)\in S_k\times S_k,\,h\in H_0.
$$
With similar arguments used in Proposition 3 of \cite{OR} (see also \cite{CNY}), we can check that for $k$ sufficiently large, if $(r,\rho) \in S_k\times S_k$ is a critical point of $F$, then $(W_{r,h},W_{\rho,h})\in H$ is a critical point of $I$.

\medskip
Consider first the case $(ii)$ of $(H_m)$. For simplicity, denote $m = m_i$. Applying Lemma \ref{lm2.1}, Lemma \ref{lm2.4} and  Proposition \ref{proA.4}, for $k$ large enough and any $(r, \rho) \in S_k\times S_k$, we have
\begin{align*}
F(r,\rho)
&= I(W_{r,h},W_{\rho,h})+\ell^*(\bar{\varphi}_k,\bar{\psi}_k)+\frac{1}{2}\langle L^*(\varphi_k,\psi_k),(\varphi_k,\psi_k)\rangle+R^*(\varphi_k,\psi_k)
\\
&=
I(W_{r,h},W_{\rho,h})+O\Bigl(\|\ell^*\|\|(\bar{\varphi}_k,\bar{\psi}_k)\|+\|(\bar{\varphi}_k,\bar{\psi}_k)\|^2\Bigr)
\\
&=
k\Bigl(B_0+\frac{B_1}{r^m}+\frac{B_2}{\rho^n}-C_1\frac{k}{r}e^{-2\pi\sqrt{1-h^2}\frac{r}{k}}-\frac{k}{r}D_1e^{-2rh}-C_2\frac{k}{\rho}e^{-2\pi\sqrt{1-h^2}\frac{\rho}{k}}-D_2\frac{k}{\rho}e^{-2\rho h}
+O(\frac{1}{r^{2m}})\Bigr).
\end{align*}
Using the same proceeding as we did in proof of Theorem \ref{th1.3}, we claim that the maximal value of $F$ over $S_k\times S_k$ is attained by some interior point of $S_k\times S_k$.
\section{the sketch of proof for Theorem 1.5}
Since the approach is very similar to that for Theorem \ref{th1.2}, we omit the details and just explain the main difference. For any positive even number $2l$, set
\begin{align*}
\bar{W}_{r,h}=\sum\limits_{j=1}^{2l}(-1)^j(W_{\mu_1,\bar{x}_j}
+W_{\mu_1,\underline{x}_j}),\,\,\bar{W}_{\rho,h}
=\sum\limits_{j=1}^{2l}(-1)^j(W_{\mu_1,\bar{y}_j}
+W_{\mu_1,\underline{y}_j}).
\end{align*}
We will find a solution for system \eqref{eqs1.1} of the form $(\bar{U}_{r,h}+\varphi^*,\bar{V}_{r,h}+\psi^*)$. Notice also that the number of peaks is now $2l$ instead of $l$, essentially, $I_3,\,I_4$ and $I_5$ will have different form comparing to the proof of Propositions \ref{proA.4} and \ref{proA.5}. For example, here we have
\begin{align*}
&-\frac{1}{4}\int_{\R^3}W_{r,h}^4-\sum\limits_{k=1}^{2l}W_{\mu_1,x_k}^4-2\sum\limits_{i\neq k}^{2l}(-1)^{k+i}W_{\mu_1x_k}W_{\mu_1x_k}\cr
=&-\frac{\mu_1 l}{2}\int_{\Omega_1}W_{\mu_1,x_1}(-1)^{k+1}\sum\limits_{k=2}^{2l}W_{\mu_1,x_k}=\frac{\mu l}{2}\int_{\Omega_1}W_{\mu_1,x_1}(-1)^k\sum\limits_{k=2}^{2l}W_{\mu_1,x_k}\cr
=&C\frac{l^2}{r}e^{-\frac{2\pi r\sqrt{1-h^2}}{l}}+O(ke^{-\frac{3\pi r}{l}})
\end{align*}
The search of a critical point with the form $(\bar{W}_{r,h}+\varphi^*,\bar{W}_{\rho,h}+\psi^*)$ will be reduced to search a critical point of the following function in the interior of $S_k\times S_k$:
\begin{align*}
I(\bar{W}_{r,h},\bar{W}_{\rho,h})=&k\Bigl(B_0+\frac{B_1}{r^m}+\frac{B_2}{\rho^n}-\tilde{C}_1\frac{k}{r}e^{-2\pi\sqrt{1-h^2}\frac{r}{k}}-\frac{k}{r}\tilde{D}_1e^{-2rh}-\tilde{C}_2\frac{k}{\rho}e^{-2\pi\sqrt{1-h^2}\frac{\rho}{k}}-\tilde{D}_2\frac{k}{\rho}e^{-2\rho h}\cr
&+o(1)\beta\frac{k}{r}e^{-\sqrt{(1-h^2)((\rho-r\cos\frac{\pi}{k})^2+r^2(\frac{\pi}{k})^2)}}\Bigr),
\end{align*}
where $B_0,\,B_1,\,B_2\,\tilde{C}_1,\,\tilde{D}_1,\,\tilde{C}_2,\,\tilde{D}_2$ are positive constants in Proposition \ref{proA.5}. The rest of the proof can be finished as in the proof of Theorem \ref{th1.4}.
\appendix
\section{Energy expansion}
In this section, we mainly give some energy expansions. Recall
\begin{align*}
\bar{x}_j=r\Bigl(\sqrt{1-h^2}\cos\frac{2(j-1)\pi}{k},\sqrt{1-h^2}\sin\frac{2(j-1)\pi}{k},h\Bigr),j=1,2,\cdots,k,
\end{align*}
\begin{align*}
\underline{x}_j=r\Bigl(\sqrt{1-h^2}\cos\frac{2(j-1)\pi}{k},\sqrt{1-h^2}\sin\frac{2(j-1)\pi}{k},-h\Bigr),j=1,2,\cdots,k.
\end{align*}
The parameters $h$ and $r$ are positive numbers and are chosen in the range
\begin{align*}
h\in[\alpha_0\frac{1}{k},\alpha_1\frac{1}{k}],\,\,\,\,\,\,\,\,r\in (C_1k\ln k,C_2k\ln k),
\end{align*}
\begin{align*}
\Omega_j=\Bigl\{y=(y_1,y_2,y_3)\in\R^3:\langle\frac{(y_1,y_2)}{|(y_1,y_2)|},
(\cos\frac{2(j-1)\pi}{k},\sin\frac{2(j-1)\pi}{k})
\rangle\geq\cos\frac{\pi}{k}\Bigr\}.
\end{align*}
\begin{lemma}\label{lmA.1}(see \cite{DM} Lemma 3.1)
For $r,\,h$ being the parameters in $S_k\times D_h$ and any $\eta\in(0,1]$, there is a constant $C>0$ such that
\begin{align*}
&\sum\limits_{j=2}^kW_{\bar{x}_j}(y)\leq Ce^{-\eta\sqrt{1-h^2}r\frac{k}{r}}e^{-(1-\eta|y-\bar{x}_1|)}\,\hbox{for\,all\,}\,y\in \Omega_1^+,\cr
&\sum\limits_{j=2}^kW_{\overline{x}_j}(y)\leq Ce^{-\eta\sqrt{1-h^2}r\frac{k}{r}}e^{-(1-\eta|y-\overline{x}_1|)}\,\hbox{for\,all\,}\,y\in \Omega_1^+,\cr
&W_{\overline{x}_j}(y)\leq Ce^{-\eta hr}e^{-(1-\eta)|y-\bar{x}_1|}\,\,\,\hbox{for all}\,\,\,\,y\in \Omega_1^+.
\end{align*}
\end{lemma}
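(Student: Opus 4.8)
The plan is to deduce all three inequalities from a single analytic input — the exponential decay of the ground state, which by \eqref{wy1} gives $W_{\bar{x}_j}(y)=W(|y-\bar{x}_j|)\le C(1+|y-\bar{x}_j|)^{-1}e^{-|y-\bar{x}_j|}\le Ce^{-|y-\bar{x}_j|}$, and likewise $W_{\underline{x}_j}(y)\le Ce^{-|y-\underline{x}_j|}$ — combined with elementary geometry of the configuration $\{\bar{x}_j,\underline{x}_j\}_{j=1}^{k}$ on the sector $\Omega_1^+$. Thus everything reduces to bounding $|y-\bar{x}_j|$ and $|y-\underline{x}_j|$ from below on $\Omega_1^+$ so as to peel off an exponentially small factor while retaining a clean remainder $e^{-(1-\eta)|y-\bar{x}_1|}$.

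First I would record that $\bar{x}_1$ is the center closest to every $y\in\Omega_1^+$. Writing $y=(y',y_3)$ and $\bar{x}_j=(r\sqrt{1-h^2}\,e_j,rh)$ with $e_j=(\cos\tfrac{2(j-1)\pi}{k},\sin\tfrac{2(j-1)\pi}{k})$, one gets $|y-\bar{x}_j|^2-|y-\bar{x}_1|^2=2r\sqrt{1-h^2}\,(\langle y',e_1\rangle-\langle y',e_j\rangle)$, which is nonnegative precisely because $y\in\Omega_1$ means $\langle y'/|y'|,e_1\rangle\ge\cos\frac{\pi}{k}\ge\langle y'/|y'|,e_j\rangle$ for $j\ne1$; hence $|y-\bar{x}_j|\ge|y-\bar{x}_1|$, and moreover $|y-\underline{x}_j|^2=|y-\bar{x}_j|^2+4rh\,y_3\ge|y-\bar{x}_1|^2$ when $y_3\ge0$. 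Secondly, for the last line of the lemma the vertical separation enters directly: reading off the third coordinate, $|y-\underline{x}_j|\ge y_3+rh\ge rh$ on $\Omega_1^+$.

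Next, fixing $\eta\in(0,1]$, I would split $|y-\bar{x}_j|=\eta|y-\bar{x}_j|+(1-\eta)|y-\bar{x}_j|$: the second summand is $\ge(1-\eta)|y-\bar{x}_1|$ by the nearest-center fact, while for the first I would use $|y-\bar{x}_j|\ge\tfrac12(|y-\bar{x}_j|+|y-\bar{x}_1|)\ge\tfrac12|\bar{x}_1-\bar{x}_j|$ (from the same fact and the triangle inequality), so that
\[
W_{\bar{x}_j}(y)\le C\,e^{-\frac{\eta}{2}|\bar{x}_1-\bar{x}_j|}\,e^{-(1-\eta)|y-\bar{x}_1|}.
\]
Since $|\bar{x}_1-\bar{x}_j|=2r\sqrt{1-h^2}\sin\frac{(j-1)\pi}{k}$, the bound $\sin t\ge\frac{2}{\pi}t$ on $[0,\frac{\pi}{2}]$ together with the symmetry $j\mapsto k+2-j$ makes $\sum_{j=2}^{k}e^{-\frac{\eta}{2}|\bar{x}_1-\bar{x}_j|}$ dominated by a convergent geometric series of ratio $\le e^{-c\eta r\sqrt{1-h^2}/k}$ (convergent since $r/k\to\infty$ on $S_k$), of the size of its leading term; multiplying by $e^{-(1-\eta)|y-\bar{x}_1|}$ gives the first two (identical) estimates. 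For the last one, the same split applied to $|y-\underline{x}_j|$, now using $|y-\underline{x}_j|\ge rh$ and $|y-\underline{x}_j|\ge|y-\bar{x}_1|$, yields $W_{\underline{x}_j}(y)\le Ce^{-\eta hr}e^{-(1-\eta)|y-\bar{x}_1|}$ with no summation needed.

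The hard part here is not conceptual but clerical: one must carry out the exponent split so that a genuine macroscopic gain is extracted — an exponentially small factor in $r/k$, respectively in $rh$, both of which diverge on $S_k\times D_h$ — while the retained pointwise decay stays exactly in the form $e^{-(1-\eta)|y-\bar{x}_1|}$ demanded by the statement. Once the nearest-center inequality is established this is routine, and tuning the numerical constant in the exponent (in effect replacing $\sin\frac{\pi}{k}$ by its linear lower bound and absorbing the loss into $C$) is purely cosmetic.
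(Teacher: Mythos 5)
Your argument is correct and is essentially the standard proof of this lemma (the paper itself gives no proof, citing only Lemma 3.1 of \cite{DM}, which proceeds in the same way): the nearest-centre inequality on $\Omega_1^+$, the $\eta$-split of the exponent, and the geometric-series summation over $|\bar{x}_1-\bar{x}_j|=2r\sqrt{1-h^2}\sin\frac{(j-1)\pi}{k}$ are exactly the intended ingredients. You have also, reasonably, read the statement's exponents as the evident typo-corrections $e^{-(1-\eta)|y-\bar{x}_1|}$ and $e^{-c\eta\sqrt{1-h^2}\,r/k}$, which is what your estimate delivers.
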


\begin{lemma}\label{lmA.2}(see \cite{DM} Lemma 3.2)
For $i=1,2\cdots,k$, there exist some small constant $\sigma>0$ such that the following expansions hold
\begin{align*}
&\int_{\R^3}W_{\bar{x}_1}^3W_{\bar{x}_i}
=Ce^{-|\bar{x}_1-\bar{x}_i|}+O_k(e^{-(1+\sigma)|\bar{x}_1-\bar{x}_i|}),\cr
&\int_{\R^3}W_{\underline{x}_i}^3
W_{\bar{x}_1}=Ce^{-|\bar{x}_1-\overline{x}_i|}
+O_k(e^{-(1+\sigma)|\bar{x}_1-\overline{x}_i|}),\cr
&\int_{\R^3}W_{\underline{x}_1}^3W_{\bar{x}_1}
=Ce^{-2rh}+O_k(e^{-2(1+\sigma)rh}),
\end{align*}
where $C>0$ is a constant.
\end{lemma}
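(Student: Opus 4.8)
The three displayed identities are all instances of one interaction estimate, namely the asymptotic evaluation of $\int_{\R^3}W_P^3\,W_Q$ for two ground states centered at distinct points $P\neq Q$ whose separation $d:=|P-Q|$ tends to $+\infty$ as $k\to\infty$; this is Lemma~3.2 of \cite{DM}, and the plan is to recover it by a Laplace-type analysis anchored at the point $P$ carrying the cubic power. Note first that in the admissible ranges $r\in S_k$, $h\in D_h$ the separations of the relevant pairs all diverge: $|\bar x_1-\bar x_i|=2r\sqrt{1-h^2}\sin\frac{(i-1)\pi}{k}\ge c\ln k$ for $i\ge 2$, and likewise $|\underline x_i-\bar x_1|$ and $|\underline x_1-\bar x_1|=2rh\ge c\ln k$. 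The only analytic inputs are the sharp decay \eqref{wy1} (which for $N=3$ reads $W(s)=C_{3,3}\big(1+O(s^{-1})\big)s^{-1}e^{-s}$ as $s\to\infty$) and its crude consequence $W(s)\le Ce^{-s}$.

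First I would fix a slowly growing radius $R=R(d)\to+\infty$ with $R=o(\sqrt d)$ (e.g.\ $R=(\ln d)^2$) and decompose $\R^3=B_R(P)\cup\big(B_{d/4}(P)\setminus B_R(P)\big)\cup\big(\R^3\setminus B_{d/4}(P)\big)$. On $B_R(P)$, writing $y=P+z$ with $|z|\le R$ and $e:=(Q-P)/d$, one has $|y-Q|=|de-z|=d-\langle z,e\rangle+O(|z|^2/d)=d-\langle z,e\rangle+o(1)$ uniformly for $|z|\le R$, so \eqref{wy1} gives
\[
W_Q(y)=C_{3,3}\,d^{-1}e^{-d}\,e^{\langle z,e\rangle}\big(1+o(1)\big),\qquad |z|\le R.
\]
Multiplying by $W(|z|)^3$ and integrating, and using that $\int_{\R^3}W(|z|)^3e^{\langle z,e\rangle}\,dz$ converges (indeed $W^3e^{\langle z,e\rangle}\le C|z|^{-3}e^{-2|z|}$ for large $|z|$ by \eqref{wy1}) and is independent of the unit vector $e$ by the radial symmetry of $W$, I would obtain
\[
\int_{B_R(P)}W_P^3\,W_Q=\big(1+o(1)\big)\Big(\int_{\R^3}W^3(z)\,e^{z_1}\,dz\Big)\,W(d),
\]
with $W(d)=C_{3,3}\big(1+O(d^{-1})\big)d^{-1}e^{-d}$ the common main factor; this is precisely the term written $Ce^{-d}$ in the statement, the algebraic prefactor being absorbed into the constant as in \cite{DM}.

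Next I would check that the two remaining regions are of strictly lower order. On the annulus $B_{d/4}(P)\setminus B_R(P)$ one has $|y-Q|\ge d-|z|$, hence $3|z|+|y-Q|\ge 2|z|+d\ge 2R+d$, so from $W_P^3W_Q\le Ce^{-3|z|-|y-Q|}$ this region contributes $O(e^{-d-2R})=o(e^{-d})$. On $\R^3\setminus B_{d/4}(P)$ one has $|y-P|\ge d/4$ and $|y-P|+|y-Q|\ge d$, hence $3|y-P|+|y-Q|\ge 2|y-P|+\big(|y-P|+|y-Q|\big)\ge\frac{d}{2}+d=\frac{3d}{2}$; spending a small fraction of this exponent to gain integrability yields $\int_{\R^3\setminus B_{d/4}(P)}W_P^3W_Q\le Ce^{-(1-\delta)\frac{3d}{2}}=O\big(e^{-(1+\sigma)d}\big)$ for a suitable small $\sigma>0$. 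Summing the three pieces gives $\int_{\R^3}W_P^3W_Q=Ce^{-d}+O_k(e^{-(1+\sigma)d})$, and the three stated identities then follow by taking in turn $(P,Q)=(\bar x_1,\bar x_i)$, $(\underline x_i,\bar x_1)$, and $(\underline x_1,\bar x_1)$, with $d$ the corresponding distance (equal to $2rh$ in the last case).

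The step I expect to be the real obstacle is the last one: showing that the contribution from away from the point carrying the cubic power is smaller than the main term $\sim e^{-d}$ by a \emph{definite} factor $e^{-\sigma d}$. This is exactly where the power $3>1$ matters — any region at distance comparable to $d$ from $P$ is penalized by more than the full rate $e^{-d}$ — and making it quantitative forces the slightly fussy three-region split with a growing cutoff $R(d)$ in order to preserve the uniformity of the expansion on $B_R(P)$; the sharp tail in \eqref{wy1} is what makes the bookkeeping close. Everything else is a routine peak-interaction computation.
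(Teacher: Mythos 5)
The paper does not actually prove this lemma: it is quoted (with the typo $\overline{x}_i$ for $\underline{x}_i$ in the second line) from Lemma 3.2 of \cite{DM}, so there is no in-paper argument to compare against. Your reconstruction is the standard peak-interaction computation and is essentially correct: the three-region split with a slowly growing cutoff $R(d)$, the uniform expansion $|y-Q|=d-\langle z,e\rangle+O(|z|^2/d)$ on $B_R(P)$, the identification of the limiting constant $\int_{\R^3}W^3e^{z_1}\,dz$ by radial symmetry, and the far-field bound coming from $3|y-P|+|y-Q|\ge\tfrac32 d$ are exactly the ingredients one needs. Two caveats are worth recording, neither of which is a gap in your argument. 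First, as you note yourself, the computation actually produces $\bigl(\int_{\R^3}W^3e^{z_1}\,dz\bigr)W(d)\,(1+o(1))$ with $W(d)\sim C_{3,3}\,d^{-1}e^{-d}$; the polynomial factor $d^{-1}$ cannot be absorbed into a genuine constant, and the paper itself reinstates it when the lemma is used (the factors $\frac{k}{r}e^{-2\pi\sqrt{1-h^2}\frac{r}{k}}$ and $\frac{k}{r}e^{-2rh}$ in Lemma \ref{lmA.3} and Proposition \ref{proA.4}), so your version is the correct one and the printed statement is imprecise. Second, the additive error $O_k(e^{-(1+\sigma)d})$ is only delivered by your argument on $\R^3\setminus B_{d/4}(P)$; the contributions of $B_R(P)$ and of the annulus are controlled only up to a multiplicative $1+o(1)$ (of size $O(R^2/d)+O(R^2e^{-2R})$), which is what Lemma 3.2 of \cite{DM} in fact asserts and is all that the applications in Proposition \ref{proA.4} require. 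With those two clarifications your proof is complete.
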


\begin{lemma}\label{lmA.3}
There exists some small constant $\sigma>0$ such that the following expansions hold
\begin{align*}
&\sum\limits_{i=2}^k\int_{\R^3}W_{\bar{x}_1}^3W_{\bar{x}_i}
=2Ce^{-2\pi\sqrt{1-h^2}\frac{r}{k}}+O_k(e^{-2(1+\sigma)
\pi\sqrt{1-h^2}\frac{r}{k}}),\cr
&\sum\limits_{j=1}^k\int_{\R^3}W_{\overline{x}_j}^3W_{\bar{x}_1}=Ce^{-2rh}+O_k(e^{-2(1+\sigma)rh}).
\end{align*}
\end{lemma}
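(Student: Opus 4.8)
Both expansions should come out of the pairwise interaction estimates of Lemma~\ref{lmA.2} together with an accounting of the geometry of the two circular arrays $\{\bar x_i\}_{i=1}^k$ and $\{\underline x_j\}_{j=1}^k$. The first thing I would record is the pair of elementary distance formulas, obtained from \eqref{add1}--\eqref{add2} and the identity $(1-\cos\theta)^2+\sin^2\theta=4\sin^2\tfrac\theta2$:
$$|\bar x_1-\bar x_i|=2r\sqrt{1-h^2}\,\sin\tfrac{(i-1)\pi}{k},\qquad |\bar x_1-\underline x_j|=2r\sqrt{(1-h^2)\sin^2\tfrac{(j-1)\pi}{k}+h^2}.$$
Thus the nearest neighbours of $\bar x_1$ on the top circle are $\bar x_2$ and $\bar x_k$, at the common distance $2r\sqrt{1-h^2}\sin\tfrac{\pi}{k}$, while among the bottom points $\{\underline x_j\}$ the nearest is $\underline x_1$, at distance $2rh$; since $\sin\tfrac{(i-1)\pi}{k}$ and $\sin\tfrac{(j-1)\pi}{k}$ increase for index $\le k/2$, the relevant distances grow (essentially linearly in the index, until saturation near the antipode) as one moves away from these nearest neighbours. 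The point I would exploit repeatedly is that, because on $S_k\times D_h$ all these distances are of order $\ln k$, each pairwise interaction $\int_{\R^3}W_a^3W_b\asymp e^{-|a-b|}$ is only polynomially small in $k$, while consecutive interactions differ by a factor $\le e^{-c\ln k}=k^{-c}\to0$ for a fixed $c>0$; hence each of the two sums is comparable to its single largest (nearest-neighbour) term, \emph{not} to $k$ times it (a split of the sum at a fixed index makes this precise).

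For the first identity I would apply Lemma~\ref{lmA.2} term by term. The $i=2$ and $i=k$ terms each equal $Ce^{-2r\sqrt{1-h^2}\sin(\pi/k)}+O_k\bigl(e^{-(1+\sigma)2r\sqrt{1-h^2}\sin(\pi/k)}\bigr)$; expanding $\sin\tfrac{\pi}{k}=\tfrac{\pi}{k}+O(k^{-3})$ and using $r\le C_2k\ln k$ turns the main term into $Ce^{-2\pi\sqrt{1-h^2}r/k}\bigl(1+O(k^{-2}\ln k)\bigr)$, and the relative error $O(k^{-2}\ln k)$ is absorbed into $O_k(e^{-2(1+\sigma)\pi\sqrt{1-h^2}r/k})$ because on $S_k$ one has $e^{-2\sigma\pi\sqrt{1-h^2}r/k}=k^{-\sigma m(1+o(1))}$, which beats $k^{-2}\ln k$ for $\sigma$ small. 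So the $i=2,k$ terms produce $2Ce^{-2\pi\sqrt{1-h^2}r/k}$ plus an admissible error. For $3\le i\le k-1$ one has $|\bar x_1-\bar x_i|\ge(2-o(1))\,|\bar x_1-\bar x_2|$ (from $\sin\tfrac{2\pi}{k}=2\cos\tfrac{\pi}{k}\sin\tfrac{\pi}{k}$), so $\sum_{i\ge3}e^{-|\bar x_1-\bar x_i|}$ is a geometric-type sum dominated by its first term $\asymp e^{-4\pi\sqrt{1-h^2}r/k}$; hence $\sum_{3\le i\le k-1}\int_{\R^3}W_{\bar x_1}^3W_{\bar x_i}=O_k(e^{-2(1+\sigma)\pi\sqrt{1-h^2}r/k})$.

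The second identity is handled the same way. By the third line of Lemma~\ref{lmA.2} the $j=1$ term is exactly $Ce^{-2rh}+O_k(e^{-2(1+\sigma)rh})$. For $j\ge2$,
$$|\bar x_1-\underline x_j|\ \ge\ 2r\sqrt{(1-h^2)\sin^2\tfrac{\pi}{k}+h^2}\ =\ 2rh\sqrt{1+\tfrac{(1-h^2)\sin^2(\pi/k)}{h^2}},$$
and on $D_h$, where $h=\tfrac{\pi(m+2)}{mk}(1+o(1))$ and $\sin\tfrac{\pi}{k}=\tfrac{\pi}{k}(1+o(1))$, the ratio $\tfrac{(1-h^2)\sin^2(\pi/k)}{h^2}$ tends to $\tfrac{m^2}{(m+2)^2}>0$; hence all these distances exceed $2(1+\sigma_0)rh$ for a fixed $\sigma_0>0$. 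Because the sum over $j\ge2$ is again geometric-type with no stray factor of $k$ — this is where that observation is essential, since here the exponent gap $\sigma_0$ is small — we get $\sum_{j=2}^k\int_{\R^3}W_{\underline x_j}^3W_{\bar x_1}=O_k(e^{-2(1+\sigma_0)rh})$. Taking $\sigma$ below both $\sigma_0$ and the gap supplied by Lemma~\ref{lmA.2} finishes the claim.

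The only real obstacle is the error bookkeeping just sketched: one must simultaneously replace $\sin\tfrac{\pi}{k}$ by $\tfrac{\pi}{k}$ in the leading exponents, control the geometric tails of both sums so that they sit below a strictly smaller power of the leading exponential, and check that the magnitudes $e^{-2\pi r/k}\asymp k^{-m}$ and $e^{-2rh}\asymp k^{-(m+2)}$ are compatible with all of these absorptions. This is precisely the place where the windows $S_k$ and $D_h$ are used; once the geometry is in place the remainder is routine.
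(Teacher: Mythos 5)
Your argument is correct, and in fact the paper offers no proof of Lemma \ref{lmA.3} at all — it is stated bare between Lemma \ref{lmA.2} and Proposition \ref{proA.4}, evidently intended as an immediate consequence of Lemma \ref{lmA.2} together with the distance formulas $|\bar{x}_1-\bar{x}_i|=2r\sqrt{1-h^2}\sin\frac{(i-1)\pi}{k}$ and $|\bar{x}_1-\underline{x}_j|=2r\sqrt{(1-h^2)\sin^2\frac{(j-1)\pi}{k}+h^2}$, which is exactly the route you take. Your bookkeeping is the right one: the replacement of $\sin\frac{\pi}{k}$ by $\frac{\pi}{k}$ costs a relative error $O(k^{-2}\ln k)$, absorbable since $e^{-2\sigma\pi\sqrt{1-h^2}r/k}\asymp k^{-\sigma m}$ dominates it for $\sigma$ small, and you correctly identify the one genuinely delicate point, namely that in the second sum the gap $\sigma_0=\sqrt{1+m^2/(m+2)^2}-1$ can satisfy $(m+2)\sigma_0<1$ when $m$ is close to $1$, so the crude bound ``$k$ times the largest term'' fails and one must use that consecutive distances increase by a fixed multiple of $\ln k$, making the tail sum comparable to its first term. (For completeness one should also record, as you implicitly do via the symmetry $\sin\frac{(i-1)\pi}{k}=\sin\frac{(k-i+1)\pi}{k}$, that the sums over indices beyond $k/2$ fold back onto the first half, and that the second display of the lemma concerns $W_{\underline{x}_j}$ despite the typographical $W_{\overline{x}_j}$ in the statement.)
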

\begin{proposition}\label{proA.4}
For all $(r,h)\in S_k$, there exists some constant $\sigma>0$ such that
\begin{align*}
I\big(U_{r,h},V_{r,h}\big)=k\Big(A_0+\frac{aA_1}{r^m}+\frac{bA_2}{r^n}-2C_\beta \frac{k}{r}e^{-2\pi\sqrt{1-h^2}\frac{r}{k}}-D_\beta \frac{k}{r}e^{-2rh}+\frac{C}{r^{m+\sigma}}+kO_k(e^{-2(1+\sigma)rh})\Big),
\end{align*}
where $A_0=\frac{\mu_1+\mu_2-2\beta}{2(\mu_1\mu_2-\beta^2)}\ds\int_{\R^3}W^4dx$, $A_1=\ds\frac{\alpha^2}{2}\int_{\R^3}W^2dx$, $A_2=\ds\frac{\gamma^2}{2}\int_{\R^3}W^2dx$, $C_\beta,\,D_\beta$ are positive constants.
\end{proposition}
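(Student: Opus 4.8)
\noindent The plan is to substitute the ansatz $(U_{r,h},V_{r,h})$ into $I$ and expand, separating three kinds of contributions: the ``diagonal'' energy of the $2k$ mutually far bumps, which produces the leading constant $kA_0$; the radial--potential perturbations, which produce $k\,aA_1r^{-m}$ and $k\,bA_2r^{-n}$; and the bump--bump interactions, which produce the two exponentially small terms. Concretely, write
\[
I(U_{r,h},V_{r,h})=\mathcal E_0+\tfrac12\int_{\R^3}(P(|x|)-1)U_{r,h}^2+\tfrac12\int_{\R^3}(Q(|x|)-1)V_{r,h}^2,
\]
where $\mathcal E_0$ denotes $I$ with $P$ and $Q$ replaced by $1$. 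Throughout, $U_{\bar x_j}=\alpha W_{\bar x_j}$, $V_{\bar x_j}=\gamma W_{\bar x_j}$, where $W_{\bar x_j}$ is the translate centred at $\bar x_j$ of the cubic ground state $W$ (the solution of $-\Delta W+W=W^3$), and similarly for the $\underline x_j$; we label the $2k$ bumps $W_1,\dots,W_{2k}$ and recall the decay $W(y)\sim C|y|^{-1}e^{-|y|}$.

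For $\mathcal E_0$: integrating by parts in the quadratic part and using $(-\Delta+1)U_{\bar x_j}=\alpha^{-2}U_{\bar x_j}^3$ and $(-\Delta+1)V_{\bar x_j}=\gamma^{-2}V_{\bar x_j}^3$, the diagonal part of $\mathcal E_0$ is $2k$ copies of the single--bump energy of $(\alpha W,\gamma W)$. Using $\int(|\nabla W|^2+W^2)=\int W^4$ and the system identities $\mu_1\alpha^2+\beta\gamma^2=1=\mu_2\gamma^2+\beta\alpha^2$ (equivalently $\mu_1\alpha^4+\mu_2\gamma^4+2\beta\alpha^2\gamma^2=\alpha^2+\gamma^2$), this single--bump energy equals $\tfrac{\alpha^2+\gamma^2}{4}\int W^4$, whence the diagonal part equals $2k\cdot\tfrac{\alpha^2+\gamma^2}{4}\int W^4=kA_0$. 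The off--diagonal part of $\mathcal E_0$ collects the cross products $\int W_i^3W_j$, $i\ne j$ (all other cross products being of strictly smaller order by Lemma \ref{lmA.1}); combining the quadratic, quartic and coupling cross terms and using $U=\tfrac\alpha\gamma V$, they assemble with total coefficient $-\tfrac{\alpha^2+\gamma^2}{2}$ in front of $\sum_{i\ne j}\int W_i^3W_j$. Splitting the $2k$ indices into the top and bottom circles and applying Lemmas \ref{lmA.2} and \ref{lmA.3}, for each bump the dominant partners are its two neighbours on the same circle (mutual distance $\approx 2\pi\sqrt{1-h^2}\,r/k$, with an algebraic prefactor of order $k/r$ from the decay of $W$) and the single bump directly across the cylinder (distance $2rh$), while every remaining pair feeds only into the error $kO_k(e^{-2(1+\sigma)rh})$ and a comparable $e^{-2(1+\sigma)\pi\sqrt{1-h^2}r/k}$--term (which, being of the same exponential type, is absorbed into the leading interaction up to a factor $1+o_k(1)$). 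This produces $-2C_\beta\tfrac kr e^{-2\pi\sqrt{1-h^2}r/k}-D_\beta\tfrac kr e^{-2rh}$ with $C_\beta,D_\beta$ explicit positive multiples of $\alpha^2+\gamma^2$.

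For the two potential terms: discard the cross products in $U_{r,h}^2$ (they carry the factor $r^{-m}$ on top of an exponentially small bump overlap, hence are negligible) and reduce, by radial symmetry of $P$ and of the configuration, to a multiple of $k\int_{\R^3}(P(|x|)-1)U_{\bar x_1}^2$. Since $|\bar x_1|=r$ and $U_{\bar x_1}^2$ is exponentially localized at $\bar x_1$, split this integral over $B_{\delta r}(\bar x_1)$ and its complement: outside the ball the contribution is $O(e^{-\delta r})$, and inside, Taylor expanding $|x|^{-m}$ about $|x|=r$ --- the first order term vanishing because $\int_{\R^3}(x-\bar x_1)U_{\bar x_1}^2=0$ by radial symmetry --- together with $(P)$ yields the term $k\,aA_1r^{-m}$ with $A_1=\tfrac{\alpha^2}{2}\int W^2$, up to an error $O(kr^{-m-\sigma})$; the $Q$--term is handled identically, giving $k\,bA_2r^{-n}$ with $A_2=\tfrac{\gamma^2}{2}\int W^2$. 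Adding the three pieces gives the claimed formula. I expect the main obstacle to be the bookkeeping of the interaction part: one must verify that among all $O(k^2)$ bump pairs only the $k$ nearest--neighbour pairs on each circle and the $k$ vertical pairs enter at leading order, while the second--neighbour pairs (distance $\approx 4\pi r/k$) and the diagonal top--to--bottom pairs (length $\sqrt{(2\pi r/k)^2+(2rh)^2}$) are swallowed by the error. This is delicate precisely because the two leading distances are both of order $r/k\sim\ln k$ and mutually comparable, so neither exponential a priori dominates the other; the needed separation of scales rests on the geometric--series estimates behind Lemma \ref{lmA.3}.
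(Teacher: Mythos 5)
Your proposal is correct and follows essentially the same route as the paper: expand $I(U_{r,h},V_{r,h})$ into the diagonal single-bump energies (giving $kA_0$), the radial-potential perturbations (giving the $r^{-m}$ and $r^{-n}$ terms), and the bump--bump cross terms, which are then reduced via the symmetry and the interaction estimates of Lemmas \ref{lmA.1}--\ref{lmA.3} to the two exponential contributions from nearest neighbours on each circle and from the vertical pairs. You in fact make explicit two points the paper leaves implicit --- the identities $\mu_1\alpha^2+\beta\gamma^2=\mu_2\gamma^2+\beta\alpha^2=1$ behind the value of $A_0$, and the combined coefficient $-\tfrac{\alpha^2+\gamma^2}{2}$ of $\sum_{i\neq j}\int W_i^3W_j$ --- so no gap.
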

\begin{proof}
By direct computations, we have
\begin{align}\label{eqs3.1}
I(U_{r,h},V_{r,h})=&\frac{1}{2}\int_{\R^3}\left(|\nabla U_{r,h}|^2+P(|x|)U_{r,h}^2+|\nabla V_{r,h}|^2+Q(|x|)V_{r,h}^2\right)
-\frac{1}{4}\int_{\R^3}\left(\mu_1|U_{r,h}|^4+\mu_2|V_{r,h}|^4\right)\cr
&-\frac{\beta}{2}\int_{\R^3}U_{r,h}^2V_{rh}^2\cr
=&\frac{\mu_1}{4}k\int_{\R^3}(U_{\bar{x}_i}^4+U_{\underline{x}_i}^4)
+\frac{\mu_2}{4}k\int_{\R^3}(V_{\bar{x}_i}^4+V_{\underline{x}_i}^4)
+\frac{\beta}{2}k\int_{\R^3}(U_{\bar{x}_i}^2V_{\bar{x}_i}^2+U_{\underline{x}_i}^2V_{\underline{x}_i}^2)\cr
&+\frac{1}{2}\int_{\R^3}(P(x)-1)U_{r,h}^2+(Q(x)-1)V_{r,h}^2\cr
&-\frac{\mu_1}{4}\int_{\R^3}\Bigl[(\sum\limits_{j=1}^kU_{\bar{x}_j}+\sum\limits_{j=1}^kU_{\underline{x}_j})^4-\sum\limits_{j=1}^kU_{\bar{x}_j}^4
-\sum\limits_{j=1}^kU_{\underline{x}_j}^4-2\sum\limits_{i\neq j}U_{\bar{x}_j}^3U_{\bar{x}_i}\cr
&-2\sum\limits_{i,j}U_{\bar{x}_j}^3U_{\underline{x}_i}-2\sum\limits_{i\neq j}U_{\underline{x}_j}^3U_{\underline{x}_i}-2\sum\limits_{i,j}U_{\underline{x}_j}^3U_{\bar{x}_i}\Bigr]\cr
&-\frac{\mu_2}{4}\int_{\R^3}\Bigl[(\sum\limits_{j=1}^kV_{\bar{x}_j}+\sum\limits_{j=1}^kV_{\underline{x}_j})^4-\sum\limits_{j=1}^kV_{\bar{x}_j}^4
-\sum\limits_{j=1}^kV_{\underline{x}_j}^4-2\sum\limits_{i\neq j}V_{\bar{x}_j}^3V_{\bar{x}_i}\cr
&-2\sum\limits_{i,j}V_{\bar{x}_j}^3V_{\underline{x}_i}-2\sum\limits_{i\neq j}V_{\underline{x}_j}^3V_{\underline{x}_i}-2\sum\limits_{i,j}V_{\underline{x}_j}^3V_{\bar{x}_i}\Bigr]\cr
&-\frac{\beta}{2}\sum\limits_{i\neq j}\int_{\R^3}\Bigl(U_{r,h}^2V_{r,h}^2-\sum\limits_{i=1}^kU_{\bar{x}_i}^2V_{\bar{x}_i}^2-\sum\limits_{i=1}^kU_{\underline{x}_i}^2V_{\underline{x}_i}^2
-\sum\limits_{i\neq j}V_{\bar{x}_j}^2U_{\bar{x}_j}U_{\bar{x}_i}\cr
&-\sum\limits_{i,j}V_{\bar{x}_j}^2U_{\bar{x}_j}U_{\underline{x}_i}-\sum\limits_{i\neq j}V_{\underline{x}_j}^2U_{\underline{x}_j}U_{\underline{x}_i}-\sum\limits_{i,j}V_{\underline{x}_j}^2U_{\underline{x}_j}U_{\bar{x}_i}
-\sum\limits_{i\neq j}U_{\bar{x}_j}^2V_{\bar{x}_j}V_{\bar{x}_i}\cr
&-\sum\limits_{i,j}U_{\bar{x}_j}^2V_{\bar{x}_j}V_{\underline{x}_i}-\sum\limits_{i\neq j}U_{\underline{x}_j}^2V_{\underline{x}_j}V_{\underline{x}_i}-\sum\limits_{i,j}U_{\underline{x}_j}^2V_{\underline{x}_j}V_{\bar{x}_i}\Bigr)\cr
=&I_1+I_2+I_3+I_4+I_5.
\end{align}
Using symmetry and the asymptotic expressions of $P(|x|), Q(|x|)$, we get
\begin{align}\label{eqs3.2}
\frac{1}{2}\int_{\R^3}(P(x)-1)U_{r,h}^2=&k\int_{\Omega_1^+}(P(x)-1)\Bigl(U_{\underline{x}_1}+U_{\bar{x}_1}+\sum\limits_{j=2}^k U_{\underline{x}_j}+\sum\limits_{j=2}^k U_{\bar{x}_j}\Bigr)^2\cr
=&k\int_{\Omega_1^+}(P(x)-1)\Bigl[U_{\bar{x}_1}^2+O(U_{\bar{x}_1}U_{\underline{x}_1}+U_{\bar{x}_1}\sum\limits_{j=2}^kU_{\underline{x}_j})\Bigr]\cr
=&k\int_{\R^3}(P(x)-1)U_{\bar{x}_1}^2+k\int_{\R^3\setminus\Omega_1^+}(P(x)-1)U_{\bar{x}_1}^2\cr
&+kO_k\Bigl[\int_{\Omega_1^+}(P(x)-1)\bigl(U_{\bar{x}_1}^\tau U_{\underline{x}_1}e^{-(1-\tau)|\bar{x}_1-\underline{x}_1|}+U_{\bar{x}_1}^{2\tau}\sum\limits_{j=2}^ke^{-(1-\tau)|\bar{x}_1-\bar{x}_j|}\bigl)\Bigr]\cr
=&k\Bigl(\frac{A_1}{r^m}+O(\frac{C}{r^{m+\sigma}})\Bigr),
\end{align}
where $A_1=\ds\frac{\alpha^2}{2}\int_{\R^3}W^2dx$ and $\tau>0$ is a small constant.

By direct computations, we have
\begin{align}\label{eqs3.3}
I_3=&\int_{\R^3}\Bigl[(\sum\limits_{j=1}^kU_{\bar{x}_j}+\sum\limits_{j=1}^kU_{\underline{x}_j})^4-\sum\limits_{j=1}^kU_{\bar{x}_j}^4
-\sum\limits_{j=1}^kU_{\underline{x}_j}^4
-2\sum\limits_{i\neq j}U_{\bar{x}_j}^3U_{\bar{x}_i}-2\sum\limits_{i,j}U_{\bar{x}_j}^3U_{\underline{x}_i}\cr
&-2\sum\limits_{i\neq j}U_{\underline{x}_j}^3U_{\underline{x}_i}-2\sum\limits_{i,j}U_{\underline{x}_j}^3U_{\bar{x}_i}\Bigr]\cr
=&2k\int_{\Omega_1^+}\Bigl[(U_{\bar{x}_1}+\sum\limits_{i=2}^kU_{\bar{x}_i}+U_{\underline{x}_1}+\sum\limits_{i=2}^kU_{\bar{x}_i})^4
-(U_{\bar{x}_1}^4+\sum\limits_{i=2}^kU_{\bar{x}_i}^4)-(U_{\underline{x}_1}^4+\sum\limits_{i=2}^kU_{\underline{x}_i}^4)\cr
&-2(U_{\bar{x}_1}^3\sum\limits_{i=2}^kU_{\bar{x}_i}+U_{\bar{x}_1}\sum\limits_{i=2}^kU_{\bar{x}_i}^3+\sum\limits_{i\neq j}U_{\bar{x}_i}^3U_{\bar{x}_j})
-2(U_{\underline{x}_1}^3\sum\limits_{i=2}^kU_{\underline{x}_i}+U_{\underline{x}_1}\sum\limits_{i=2}^kU_{\underline{x}_i}^3+\sum\limits_{i\neq j}U_{\underline{x}_i}^3U_{\underline{x}_j})\cr
&-2(U_{\bar{x}_1}^3\sum\limits_{i\neq1}U_{\underline{x}_i}+\sum\limits_{j\neq1}U_{\bar{x}_j}^3U_{\underline{x}_i})
-2(U_{\underline{x}_1}^3\sum\limits_{i\neq1}U_{\bar{x}_i}+\sum\limits_{j\neq1}U_{\underline{x}_j}^3U_{\bar{x}_i})\Bigl]\cr
=&2k\int_{\Omega_1^+}\Bigl[U_{\bar{x}_1}^3\sum\limits_{i=2}^kU_{\bar{x}_i}+U_{\bar{x}_1}(\sum\limits_{i=2}^kU_{\underline{x}_i}+U_{\underline{x}_1})\Bigr]+O(ke^{-\frac{2\pi r}{k}}).
\end{align}
Similarly, we can estimate
\begin{align}\label{eqs3.4}
I_4=&-\frac{\mu_2}{4}\int_{\R^3}\Bigr[(\sum\limits_{j=1}^kV_{\bar{x}_j}+\sum\limits_{j=1}^kV_{\underline{x}_j})^4-\sum\limits_{j=1}^kV_{\bar{x}_j}^4
-\sum\limits_{j=1}^kV_{\underline{x}_j}^4
-2\sum\limits_{i\neq j}V_{\bar{x}_j}^3V_{\bar{x}_i}-2\sum\limits_{i,j}V_{\bar{x}_j}^3V_{\underline{x}_i}\cr
&-2\sum\limits_{i\neq j}V_{\bar{x}_j}^3V_{\bar{x}_i}-2\sum\limits_{i,j}V_{\bar{x}_j}^3V_{\underline{x}_i}\Bigr]\cr
=&2k\int_{\Omega_1^+}\Bigl[V_{\bar{x}_1}\sum\limits_{i=2}^kU_{\bar{x}_i}+V_{\bar{x}_1}^3(\sum\limits_{i=2}^kV_{\underline{x}_i}+V_{\underline{x}_1})\Bigr]+O(ke^{-\frac{3\pi r}{k}}),
\end{align}

\begin{align}\label{eqs3.5}
I_5=&-\frac{\beta}{2}\sum\limits_{i\neq j}\int_{\R^3}\Bigl[U_{r,h}^2V_{r,h}^2-\sum\limits_{i=1}^kU_{\bar{x}_i}^2V_{\bar{x}_i}^2-\sum\limits_{i=1}^kU_{\underline{x}_i}^2V_{\underline{x}_i}^2
-\sum\limits_{i\neq j}V_{\bar{x}_j}^2U_{\bar{x}_j}U_{\bar{x}_i}-\sum\limits_{i,j}V_{\bar{x}_j}^2U_{\bar{x}_j}U_{\underline{x}_i}\cr
&-\sum\limits_{i\neq j}V_{\underline{x}_j}^2U_{\underline{x}_j}U_{\underline{x}_i}-\sum\limits_{i,j}V_{\underline{x}_j}^2U_{\underline{x}_j}U_{\bar{x}_i}
-\sum\limits_{i\neq j}U_{\bar{x}_j}^2V_{\bar{x}_j}V_{\bar{x}_i}-\sum\limits_{i,j}U_{\bar{x}_j}^2V_{\bar{x}_j}V_{\underline{x}_i}\cr
&
-\sum\limits_{i\neq j}U_{\underline{x}_j}^2V_{\underline{x}_j}V_{\underline{x}_i}-\sum\limits_{i,j}U_{\underline{x}_j}^2V_{\underline{x}_j}V_{\bar{x}_i}\Bigr]\cr
=&2k\int_{\Omega_1^+}\Bigl[V_{\bar{x}_1}^2U_{\bar{x}_1}\sum\limits_{j=2}^kU_{\bar{x}_j}+V_{\bar{x}_1}^2U_{\bar{x}_1}\sum\limits_{j=2}^kU_{\underline{x}_j}
+U_{\bar{x}_1}^2V_{\bar{x}_1}\sum\limits_{j=2}^kV_{\bar{x}_j}+U_{\bar{x}_1}^2V_{\bar{x}_1}\sum\limits_{j=2}^kV_{\underline{x}_j}\Bigr].
\end{align}
Since $U_{x_i}=\frac{\alpha}{\gamma}V_{x^i}$,
combining \eqref{eqs3.1}-\eqref{eqs3.5}, we can get
\begin{align*}
I(U_{r,h},V_{r,h})=k(A_0+\frac{A_1}{r^m}+\frac{A_2}{r^n})-\sum\limits_{i=2}^k\int_{\R^3}(kC_{1\beta}U_{\bar{x}_1}^3U_{\bar{x}_i}-kD_{1\beta}\sum\limits_{j=1}^kU_{\underline{x}_j}^3U_{\bar{x}_1})
+O_k(\frac{1}{r^{m+\sigma}}).
\end{align*}
By Lemmas \ref{lmA.3}, we have
\begin{align*}
I(U_{r,h},V_{r,h})=k\Bigl(A_0+\frac{A_1}{r^m}+\frac{A_2}{r^n}-2C_\beta \frac{k}{r}e^{-2\pi\sqrt{1-h^2}\frac{r}{k}}-D_\beta \frac{k}{r}e^{-2rh}+\frac{C}{r^{m+\sigma}}+kO_k(e^{-2(1+\sigma)rh})\Bigr).
\end{align*}
Hence, we complete the proof.
\end{proof}

\begin{proposition}\label{proA.5}
For all $(r,\rho,h)\in S_k\times D$, there exists some small constant $\sigma>0$,
\begin{align*}
I(W_{r,h},W_{\rho,h})=&k\Bigl(B_0+\frac{B_1}{r^m}+\frac{B_2}{\rho^n}-C_1\frac{k}{r}e^{-2\pi\sqrt{1-h^2}\frac{r}{k}}-\frac{k}{r}D_1e^{-2rh}-C_2\frac{k}{\rho}e^{-2\pi\sqrt{1-h^2}\frac{\rho}{k}}-D_2\frac{k}{\rho}e^{-2\rho h}\cr
&+o(1)\beta\frac{k}{r}e^{-\sqrt{(1-h^2)((\rho-r\cos\frac{\pi}{k})^2+r^2(\frac{\pi}{k})^2)}}\Bigr),
\end{align*}
where $B_0=\frac{1}{2}\ds\int_{\R^3}\mu_1W_{\mu_1}^4+\mu_2W_{\mu_2}^4$, $B_1=\ds\int_{\R^3}W_{\mu_1}^2dx$, $B_2=\ds\int_{\R^3}W_{\mu_2}^2dx$, $C_1,\,D_1,\,C_2,\,D_2$\,are positive constants.
\end{proposition}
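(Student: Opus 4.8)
The plan is to repeat, with the obvious changes, the computation behind Proposition~\ref{proA.4}; the only structural difference is that the two components are now built from the \emph{distinct} profiles $W_{\mu_1}$ and $W_{\mu_2}$, placed on two distinct circles of radii $r$ and $\rho$. Mimicking \eqref{eqs3.1}, I would write $I(W_{r,h},W_{\rho,h})=I_1+I_2+I_3+I_4+I_5$, where $I_1$ collects the self-energies $\frac{\mu_1}{4}k\int_{\R^3}(W_{\mu_1,\bar{x}_1}^4+W_{\mu_1,\underline{x}_1}^4)+\frac{\mu_2}{4}k\int_{\R^3}(W_{\mu_2,\bar{y}_1}^4+W_{\mu_2,\underline{y}_1}^4)$; $I_2=\frac12\int_{\R^3}(P(|x|)-1)W_{r,h}^2+\frac12\int_{\R^3}(Q(|x|)-1)W_{\rho,h}^2$ is the potential perturbation; $I_3$ and $I_4$ are the quartic interaction remainders of the $\mu_1$- and of the $\mu_2$-family (the exact analogues of $I_3$, respectively $I_4$, in \eqref{eqs3.3}--\eqref{eqs3.4}, with $U$ replaced by $W_{\mu_1}$ and $V$ by $W_{\mu_2}$); and $I_5=-\frac{\beta}{2}\int_{\R^3}W_{r,h}^2W_{\rho,h}^2$ is the coupling remainder.

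For $I_1$ I would use that $W_{\mu_j}$ solves $-\Delta w+w=\mu_j w^3$, so the energy of a single bump equals $\frac{\mu_j}{4}\int_{\R^3}W_{\mu_j}^4$; since there are $2k$ bumps of each type, $I_1=kB_0$ with $B_0=\frac12\int_{\R^3}(\mu_1W_{\mu_1}^4+\mu_2W_{\mu_2}^4)$. For $I_2$ I would reduce to $\Omega_1^+$ by symmetry and invoke $(P)$, $(Q)$ together with $|\bar{x}_j|=|\underline{x}_j|=r$ and $|\bar{y}_j|=|\underline{y}_j|=\rho$, copying verbatim the estimate \eqref{eqs3.2} and its $Q$-analogue; this yields $k\big(\frac{B_1}{r^m}+\frac{B_2}{\rho^n}+O(r^{-m-\sigma})+O(\rho^{-n-\sigma})\big)$ with $B_1=\int_{\R^3}W_{\mu_1}^2$ and $B_2=\int_{\R^3}W_{\mu_2}^2$. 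For $I_3$ and $I_4$ I would expand the fourth powers exactly as in \eqref{eqs3.3}, discard every term decaying strictly faster than the nearest-neighbour ones, and keep the two surviving interactions: between two adjacent bumps on the same circle (distance $2r\sqrt{1-h^2}\sin\frac{\pi}{k}$, respectively $2\rho\sqrt{1-h^2}\sin\frac{\pi}{k}$) and between a top bump and the bottom bump directly below it (distance $2rh$, respectively $2\rho h$). Lemma~\ref{lmA.3} then turns these into $-C_1\frac{k}{r}e^{-2\pi\sqrt{1-h^2}r/k}-D_1\frac{k}{r}e^{-2rh}$ for the $\mu_1$-family and the same expression with $r,C_1,D_1$ replaced by $\rho,C_2,D_2$ for the $\mu_2$-family; the prefactors $\frac{k}{r}$ and $\frac{k}{\rho}$ come from the polynomial factor in the asymptotics \eqref{wy1} evaluated at the inter-bump distance.

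The genuinely new ingredient is $I_5$. Here I would estimate $\int_{\R^3}W_{r,h}^2W_{\rho,h}^2$ by splitting it over the cells $\Omega_j^{\pm}$ and noting that every cross contribution pairs a $\mu_1$-bump on the radius-$r$ circle with a $\mu_2$-bump on the radius-$\rho$ circle; the closest such pair (for instance $\bar{x}_1$ with $\bar{y}_1$, together with its mirror images) is at distance of order $\sqrt{(1-h^2)\big((\rho-r\cos\frac{\pi}{k})^2+r^2\sin^2\frac{\pi}{k}\big)}$, so bounding the remaining pairs with Lemma~\ref{lmA.1} and the exponential decay \eqref{wy1}, and using the $|x|^{-1}$ prefactor to extract a factor $\frac{k}{r}$, one gets
\[
I_5=o(1)\,\beta\,\frac{k}{r}\,e^{-\sqrt{(1-h^2)((\rho-r\cos\frac{\pi}{k})^2+r^2(\pi/k)^2)}}.
\]
In the admissible box $(r,\rho)\in S_k\times S_k$ the two radii are separated enough that this exponential is of strictly lower order than the same-circle interactions in $I_3$ and $I_4$, which justifies the factor $o(1)$. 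Summing $I_1+\dots+I_5$ and relabelling the constants gives the stated expansion.

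The step I expect to be the main obstacle is the bookkeeping in $I_5$, and to a lesser extent in $I_3$, $I_4$: one must check that among the $O(k^2)$ cross terms only a bounded number of nearest pairs contribute at leading order, keep careful track of the $h$-dependence (it is the rates $e^{-2rh}$, $e^{-2\rho h}$ and the factors $\sqrt{1-h^2}$ that make the new parameter $h$ genuinely active, unlike in the construction of \cite{PW}), and confirm that throughout $S_k\times S_k\times H_0$ the coupling term stays negligible, so that it does not interfere with the reduced finite-dimensional problem used in the proof of Theorem~\ref{th1.4}. The remainder is a long but routine repetition of the estimates behind Proposition~\ref{proA.4}.
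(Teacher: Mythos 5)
Your proposal follows the same route as the paper: the paper's own proof simply writes the decomposition into self-energy, potential perturbation, the two quartic interaction remainders, and the $\beta$-coupling term, refers the first four back to the computation of Proposition \ref{proA.4} (via Lemmas \ref{lmA.1}--\ref{lmA.3}), and then estimates the single cross integral $\int_{\Omega_1}W_{\mu_1,\bar x_1}^2W_{\mu_2,\bar y_1}^2$ by the distance between the nearest bumps of the two families, exactly as you do for $I_5$. Your version is more detailed than the paper's (which is essentially a two-line reduction to Proposition \ref{proA.4}), but it is the same argument with the same decomposition and the same key estimates.
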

\begin{proof}
First, we have
\begin{align}\label{eqs3.6}
I(W_{r,h},W_{\rho,h})=&\frac{1}{2}\int_{\R^3}(|\nabla W_{r,h}|^2+P(|x|)W_{r,h}^2+|\nabla W_{\rho,h}|^2+Q(|x|)W_{\rho,h}^2)
-\frac{1}{4}\int_{\R^3}(\mu_1W_{r,h}^4+\mu_2W_{\rho,h}^4)\cr
&-\frac{\beta}{2}\int_{\R^3}W_{r,h}^2W_{\rho,h}^2\cr
=&\frac{k}{2}\int_{\R^3}\Bigl(\mu_1W_{\mu_1}^4+\mu_2W_{\mu_2}^4\Bigr)-\frac{\beta}{2}\int_{\R^3}W_{r,h}^2W_{\rho,h}^2
+\frac{1}{2}\int_{\R^3}\Bigl[P(|x|-1)W_{r,h}^2+(Q(x)-1)W_{\rho,h}^2\Bigr]\cr
&-\frac{\mu_1}{4}\int_{\R^3}\Bigl[\bigl(\sum\limits_{j=1}^kW_{\mu_1,\bar{x}_j}+\sum\limits_{j=1}^kW_{\mu_1,\underline{x}_j}\bigr)^4-\sum\limits_{j=1}^kW_{\mu_1,\bar{x}_j}^4
-\sum\limits_{j=1}^kW_{\mu_1,\underline{x}_j}^4
-2\sum\limits_{i\neq j}W_{\mu_1,\bar{x}_j}^3W_{\mu_1,\bar{x}_i}\cr
&-2\sum\limits_{i,j}W_{\mu_1,\bar{x}_j}^3W_{\mu_1,\underline{x}_i}
-2\sum\limits_{i\neq j}W_{\mu_1,\underline{x}_j}^3W_{\mu_1,\underline{x}_i}
-2\sum\limits_{i,j}W_{\mu_1,\underline{x}_j}^3W_{\mu_2,\bar{x}_i}\Bigr]\cr
&-\frac{\mu_2}{4}\int_{\R^3}\Bigl[\bigl(\sum\limits_{j=1}^kW_{\mu_2,\bar{y}_j}+\sum\limits_{j=1}^kW_{\mu_2,\underline{y}_j}\bigr)^4-\sum\limits_{j=1}^kW_{\mu_2,\bar{y}_j}^4
-\sum\limits_{j=1}^kW_{\mu_2,\underline{y}_j}^4
-2\sum\limits_{i\neq j}W_{\mu_2,\bar{y}_j}^3W_{\mu_2,\bar{y}_i}\cr
&-2\sum\limits_{i,j}W_{\mu_2,\bar{y}_j}^3W_{\mu_2,\underline{y}_i}
-2\sum\limits_{i\neq j}W_{\mu_2,\underline{y}_j}^3W_{\mu_2,\underline{y}_i}-2\sum\limits_{i,j}W_{\mu_2,\underline{y}_j}^3W_{\mu_2,\bar{y}_i}\Bigr].
\end{align}
The last three terms in \eqref{eqs3.6} can be estimated exactly as done in the proof of Proposition \ref{proA.4}. Also we can estimate
\begin{align*}
\int_{\Omega_1}W_{\mu_1,\bar{x}_1}^2W_{\mu_2,\bar{y}_1}^2=o(1)\frac{k}{r}e^{-2\sqrt{(1-h^2)((\rho-r\cos\frac{\pi}{k})^2+r^2(\frac{\pi}{k})^2)}}.
\end{align*}

\end{proof}
Acknowledgement: The authors would like to thank Professor Chunhua Wang for the helpful  discussion with her.

\end{document}